\DeclareMathAlphabet\mathbfcal{OMS}{cmsy}{b}{n}
\newtheorem{theorem}{Theorem}[section]
\newtheorem{lemma}[theorem]{Lemma}
\theoremstyle{definition}
\newtheorem{remark}[theorem]{Remark}
\newcolumntype{P}[1]{>{\centering\arraybackslash}p{#1}}
\newcommand\makebig[2]{%
  \@xp\newcommand\@xp*\csname#1\endcsname{\bBigg@{#2}}%
  \@xp\newcommand\@xp*\csname#1l\endcsname{\@xp\mathopen\csname#1\endcsname}%
  \@xp\newcommand\@xp*\csname#1r\endcsname{\@xp\mathclose\csname#1\endcsname}%
}
\renewcommand{\vec}[1]{\mbox{\boldmath$#1$}}
\newcommand{\dif}{\mathrm{d}}
\newcommand{\im}{\mathrm{i}}
\newcommand{\bbR}{\mathbb{R}}
\newcommand{\tr}{\mathrm{tr}}
\begin{document}
\title{{How far are two symmetric matrices from commuting?  With an application to object characterisation and identification in metal detection}}
\author{P.D. Ledger$^*$, W.R.B. Lionheart$^\dagger$ and J. Elgy$^\ddagger$,\\
$^*$School of Computing \& Mathematical Sciences, University of Leicester,\\
University Road, Leicester LE1 7RH, U.K.\\
$^\dagger$Department of Mathematics, The University of Manchester,\\
Oxford Road, Manchester, M13 9PL, U.K. \\
$^\ddagger$Department of Electronic, Electrical and Systems Engineering, \\
University of Birmingham, Birmingham, B15 2TT, U.K \\
Corresponding author: pdl11@leicester.ac.uk}
\maketitle

\section*{Abstract}

Examining the extent to which measurements of rotation matrices are close to each other is challenging due measurement noise. To overcome this, data is typically smoothed and  Riemannian and Euclidean metrics  {are} applied. However, if  rotation matrices are not directly measured and are instead formed by eigenvectors of measured symmetric matrices, this can be problematic if the associated eigenvalues are close.
In this work, we propose novel {semi-metrics that can be used to approximate the Riemannian metric for small angles. } Our new results  do not require eigenvector information and are beneficial for measured datasets.   
There are also issues when using comparing rotational  data  arising from computational simulations and  it is important that 
the impact of the approximations on the computed outputs is properly assessed to ensure that the approximations made and the finite precision arithmetic are not unduly polluting the results.
In this work, we examine data arising from object  characterisation in metal detection using the complex symmetric rank two magnetic polarizability tensor (MPT) description, we rigorously analyse the effects of our numerical approximations and apply our new {approximate measures of distance to the commutator of the real and imaginary parts of the MPT} to this application. Our new {approximate measures of distance provide}  additional feature information, which is invariant of the object orientation,  to aid with object identification using machine learning classifiers. We present  Bayesian classification examples to demonstrate the success of our approach.

\noindent {\bf Keywords:} Riemannian metric, semi-metric, special orthogonal group, geodesics, matrix commutator, eddy current; magnetic polarizability tensor; metal detection; Bayesian classification, machine learning.

\noindent {\bf MSC Classification:} 	65N21, 65N30, 35R30, 35B30.

\section{Introduction} 

As described by Moakher~\cite{moakher}, understanding the extent to which two  sets of rotational data, when expressed as rotation matrices, are close to each other has important applications, which include plate tectonics~\cite{fe19678b-d1c8-3aa5-a891-204719f4cecf} and sequence dependent modelling of DNA~\cite{10.1063_1.472373}~\footnote{While three-dimensional rotational data is most common, the approaches also extend to higher dimensions.}. 
In these applications, the experimental data is typically noisy and a common approach is to remove or to reduce the noise by the construction of an appropriate filter. Our interest lies in a different application, but shares with these  the need to assess the extent to which different sets of rotational data are close to each other in an appropriate metric or {measure of distance}.

In metal detection, eddy currents are induced in highly conducting objects by low-frequency time varying magnetic fields and inductive measurements are made. Metal detection has applications including security screening, locating antipersonnel landmines,  identifying foreign objects for food safety, scrap sorting, archaeology and treasure hunting. In common with electromagnetic tomography at radar frequencies~\cite{ammarivolkov2005}, impedance measurements at low frequencies for low-contrast conducting objects~\cite{ammarikangbook}, electrosensing~\cite{ammarishapeelectro} electro and magnetostatics~\cite{LedgerLionheart2016}  and non-electromagnetic applications, including  elasto-dynamics field perturbations~\cite{ammarikangbook}, hydrodynamics and in dilute composites~\cite{gwmilton,ammarikangbook}, the magnetic field perturbation in the eddy current case can be described by an asymptotic expansion as the object size tends to zero.  Here, the leading order term object provides object characterisation information through a polarizability tensor description~\cite{Ammari2014} whose coefficients are a function of the exciting frequency, the object's shape and its materials~\cite{LedgerLionheart2020spect}.  The leading order in the expansion can be simplified so that it can be expressed in terms of a  rank two complex symmetric magnetic polarizability tensor (MPT)~\cite{LedgerLionheart2015}, which reduces the number of coefficients required for the description. Furthermore, the result can be generalised to a complete {asymptotic expansion} and generalised MPTs~\cite{LedgerLionheart2023a,LedgerLionheart2018g}, where the  coefficients of the higher rank tensors are smaller, but which provide further object characterisation information.
Nevertheless, the MPT
offers an economical characterisation of conducting magnetic objects, which can be obtained from measurements of the induced voltage.  In addition, the MPT characterisation can be computed for known objects that one might encounter in a detection scenario  and we
have developed algorithms~\cite{ben2020,Elgy2024_preprint} for efficiently and accurately computing the MPT coefficients using $hp$-finite element analysis~\cite{Demkowicz,Demkowiczvol2} and discretisations based on unstructured meshes with prismatic boundary layers and high order ${\vec H} (\text{curl})$ conforming elements~\cite{SchoberlZaglmayr2005}. Furthermore, in~\cite{Elgy2024_preprint} we have applied (adaptive) proper orthogonal decomposition reduced order models~\cite{hesthaven2016} to efficiently compute the MPT spectral signature (MPT coefficients as a function of frequency) to provide additional characterisation information. This approach has been successfully  applied to complex inhomogeneous objects and excellent agreement with measurements have been obtained~\cite{Elgy2022_preprintB}.
Machine learning classification approaches~\cite{bishopbook} have been applied~\cite{ledgerwilsonlion2022} to identify the class of an object from the MPT spectral signature by considering a dataset of a large number of objects and eigenvalues (or tensor invariants) of the real and imaginary parts of the complex symmetric rank two MPT, sampled at discrete frequencies, as features~\cite{ledgerwilsonamadlion2021}. Our interest in this work focuses on examining whether rotational information from the MPT spectral signature (and other polarizability tensor descriptions) could  add further feature information to assist with the classification.
 
In common with the applications outlined in the opening paragraph, the measured MPT spectral signature data is noisy and smoothing is typically applied to reduce the noise. However, even noiseless simulated MPT spectral signature data may have issues. To obtain the simulated data,  approximations are made   including adding numerical regularisation to circumvent a divergence constraint~\cite{ledgerzaglmayr2010} and the application of a discrete approximation using a finite element discretisation~\cite{Elgy2024_preprint}. In this work, we additionally assess the impact of these approximations on the computed MPT tensor coefficients so as to differentiate between effects due to effects due to our approximations and interesting features about the rotation data.
 
  The Riemanian and the Euclidean metrics  for comparing rotations  rely on availability of the rotational data itself.
However, for situations where the rotational data is not measured, but instead needs to be obtained from measured matrix data, then performing its eigenvalue decomposition would appear the obvious choice to obtain the orthogonal matrix describing the rotations. However, while it is well known that the numerical computation of the eigenvalues of a symmetric matrix using the QR algorithm is a stable process, calculating the eigenvectors to provide rotation data becomes unstable when the eigenvalues of the matrix are close together (see~\cite{vanloon}[Thm 8.12.12, pg 399-400]). Therefore, measuring the extent to which two sets of rotational data are close to each other using traditional metrics and eigenvector information, becomes problematic. In this work, we propose alternative {semi-metrics, which can be used to approximate the Riemannian metric for small angles,} that do not require explicit computation of eigenvector information and apply it to the MPT spectral signature.

 The material proceeds as follows: {In Section~\ref{sect:define} we begin by introducing some notation and providing some definitions. This is followed by Section~\ref{sect:metrics} where we recall traditional metrics for measuring the extent to which two sets of rotational data are close to each other and present alternative distance measures that can be computed without using the eigenvector information.}  We show that the {metrics, semi-metrics and associated approximate distance measures  we propose} are invariant under rotation of the dataset and consider their sensitivity in finite precision arithmetic. {Next, Section~\ref{sect:basicexamples} provides a series of illustrative examples to demonstrate the advantages of our new semi-metrics and associated approximate distance measures.} Section~\ref{sect:mpt} recalls the MPT and presents an analysis of the impact of numerical regularisation and numerical discretisation on the computed MPT coefficients. Then, Section~\ref{sect:results} presents a series of numerical results, which compare traditional metrics to our  {approximate distance measures}  for  the extent to which two sets of rotational data obtained from MPT coefficients are close to each other. We also include an example of object classification, which includes the additional feature information provided by our new distance measures.

\section{Notation and definitions} \label{sect:define}
We list and recall below some definitions and standard properties that we will use throughout this work.
\begin{itemize}
	\item $\bbR^{n\times n}$, the set of $n$ by $n$ real matrices, dimension $n^2$.
	\item $\bbR^{n\times n}_s$ the set of symmetric  $n$ by $n$ real matrices, dimension $n(n+1)/2$.
			\item $\bbR^{n\times n}_a$, the set of skew-symmetric  $n$ by $n$ real matrices, dimension $n(n-1)/2$.
			\item $\lambda_i(A)= (\Lambda_A)_{ii}$, the eigenvalues of $A\in \bbR^{n \times n}$. 
			\item $Q_A$ {an} orthogonal matrix of (right) eigenvectors of $A\in \bbR^{n \times n}$. If $A$ is diagonalisable (e.g. $A \in \bbR_s^{n \times n}$) then $A=Q_A^T \Lambda_A Q_A$, where {the superscript} $T$ denotes the transpose. 
	\item $\rho(A) = \max ( |\lambda_1|, |\lambda_2|, \ldots,|\lambda_n|)$, the spectral radius of $A\in \bbR^{n\times n}$.
	\item $\|A\|= (\tr (A A^T))^{1/2}$, the Frobenius norm of $A\in \bbR^{n \times n}$, which is just the Euclidean norm on $\bbR^{n^2}$ and $\| A \|^2 = \displaystyle \sum_{i=1}^n\sum_{j=1}^n (A)_{ij}^2$.
	\item $\langle A,B\rangle = \tr (AB^T)$ the Frobenius inner product of $A,B \in \bbR^{n \times n}$.
	\item $[A,B] = AB-BA$, the commutator of $A,B \in \bbR^{n \times n}_s$.
	\item $SO(n)$, the set of $n$ by $n$ orthogonal matrices with determinant one. This is a submanifold of $\bbR^{n\times n}$, the component of that  defined by the equations $Q Q^T=I$ containing $I$, with $I \in \bbR^{n \times n}_s$  being the identity matrix and $Q\in SO(n)$ a rotation matrix. This manifold has the same dimension as  $\bbR^{n\times n}_a$ . 
	\item If a linear operator $T:V\to W$ is given by a matrix, say $T(v) = Av$, then its operator norm is $\|T\|_{op} = \sqrt{\max_i \lambda_i (A^TA)}$.
	\item The deviatoric part of $A\in \bbR^{n\times n}_s$ is  $ \text{dev}\,(A) = A- \frac{1}{3} \text{tr}(A)I$.
\end{itemize}       

In addition, we remind readers of the following properties
\begin{itemize}
	\item The cyclic property of trace $\tr(ABC)=\tr(BCA)$ etc for matrices $A,B,C \in \bbR^{n \times n}$.
	\item When $A\in \bbR^{n \times n}$ is diagonalisable (e.g. $A\in \bbR^{n \times n}_s$), $ \| A\| ^2 = \sum\limits_{i=1}^n \lambda_i^2$. 
	\item For $U \in SO(n),\, A\in \bbR^{n\times n} $,    $ \|UA\|=\|AU\|=\|A\|$ (this follows from left and right multiplication by an orthogonal matrix $U$ can be considered as $U\otimes I$ and $I \otimes  U$ in $SO(n^2)$.
\end{itemize}

{
\section{Metrics and semi-metrics for the distance to the commutator of two matrices}\label{sect:metrics}
}
If two symmetric matrices $A,B \in \bbR^{n \times n}_s$ commute  this is equivalent to saying that $[A,B]=0$. It is obvious that {any two} diagonal matrices commute, and that the zero matrix and identity matrix commute with everything. 
{Matrices where the eigenspace of one are all the eigenspaces of the other also commute,}
{but how far are two symmetric matrices from commuting?}
 One possibility is to consider the diagonalisation 
  of $A= Q_A ^T\Lambda_A Q_A$ and $B= Q_B^T \Lambda_B Q_B$, where $Q_A,Q_B\in SO(n)$, {which, over permutations~\footnote{{Of course the columns of $Q_A$ and $Q_B$ are only defined upto a sign, and while orderings of $\Lambda_A$ (eg $\lambda_1(A) > \lambda_2 (A) >\cdots> \lambda_n(A)$) and $\Lambda_B$ fixes the columns of $Q_A$ and $Q_B$, we consider only those signs that result in $\text{det}, Q_A = \text{det} \, Q_B=1 $. Of these possible permutations,  we chose the one leading to the smallest distance measure.}}},  and application  of a  metric on $SO(n)$, leads to a minimum distance $d(Q_A,Q_B)$.
  {We recall that a metric $d$ has the following properties:
 \begin{itemize}
 \item The distance between from a point to itself is zero $d(Q_A,Q_A)=0$.
 \item (Positivity) The distance between two points is always positive, if $Q_A\ne Q_B$ then $d(Q_A,Q_B) > 0$.
 \item(Symmetry) The distance from $Q_A$ to $Q_B$ is the same as from $Q_B$ to $Q_A$, $d(Q_A,Q_B)=d(Q_B,Q_A)$.
 \item The triangle inequality holds $d(Q_A,Q_C) \le d(Q_A,Q_B)+d(Q_B,Q_C)$.
 \end{itemize}
 }
 Some common metrics on $SO(n)$ are briefly recalled in Section~\ref{sect:commonmetrics}.  {Then, in Section~\ref{sect:prodmetrics} product metrics are briefly discussed. This is followed by Section~\ref{sect:femmes}, where we consider semi-metrics and   approximate  measures of distance that do not rely on knowledge of $Q_A$ and $Q_B$.}

\subsection{Common metrics on $SO(n)$} \label{sect:commonmetrics}

 The geodesic distance on $S O(n)$  measures the distance around the curved surface of $SO(n)$. 
 {Let $R=Q_AQ_B^T$ then  Riemannian metric between two rotations $Q_A$ and $Q_B$ is~\cite{moakher}
\begin{equation}
d_R(Q_A,Q_B) := \frac{1}{\sqrt{2}} \| \log R \| = \frac{1}{\sqrt{2}} \| \log (Q_A Q_B^T) \| \nonumber .
\end{equation}
This metric measures the shortest geodesic curve that connects $Q_A$ and $Q_B$~\cite{moakher}. This distance is a {geodesic}, which lies entirely in $SO(3)$. Additionally, {
 $k\in {\mathbb R}^n$ is a unit vector and, in the case of $n=3$, has coefficients that coincide with the alternative notation  ${\vec k}\in {\mathbb R}^3 $  for a vector in an orthonormal coordinate system with basis vectors ${\vec e}_i$, $i=1,\ldots,3$ used in Section~\ref{sect:mpt},} $\theta$ is the angle through which $R$ rotates any non-zero vector not parallel to ${k}$ and
$K \in \bbR^{n\times n}_a$ is the skew-symmetric matrix corresponding to the vector product $K{x}= {k} \times {x}$ for $x \in {\mathbb R}^n$, then
\begin{equation}
d_R(Q_A,Q_B)
 = |\theta|  \frac{1}{\sqrt{2}} \| K \| = |\theta| , \nonumber
\end{equation}
which follows from the Rodrigues angle formula
\begin{align}
R= I + \sin \theta K  +(1-\cos \theta) K^2 = \exp (\theta K) \label{eqn:rodrigues} .
\end{align}
Note that ${k}$, hence $K$, and additionally $\theta$  {are only defined up to a sign.}
It is also easy to see that $K^2$ is the  symmetric matrix $-I+{k}{k}^T$.
 As~\cite{moakher} remarks, the metric $d_R(Q_A,Q_B)$   may not be unique. If $Q_A Q_B^T$ is an involution,  so that $(Q_AQ_B^T)^2 = I$ is a rotation through an angle $\pi$, then $Q_A$ and $Q_B$ can be connected by two curves of equal length. The Riemannian metric is also  bi-invariant in $SO(3)$~\cite{moakher}.
}

 Another candidate that is easier to compute is the ``chordal distance''  {or ``Euclidean distance''} \\$d_F(Q_A,Q_B):=\| Q_A - Q_B\|=\| Q_A Q_B^T - I \|$, which has the advantage that it is a norm associated with an inner product, but measures the straight line distance in $\bbR^{n\times n}$ as a Euclidean space.  To  compare,
 \begin{eqnarray*}
 d_F(Q_A,Q_B)^2  &=& \tr ((Q_A - Q_B)(Q_A - Q_B)^T)  \\
 &=&  \tr ( 2 I - R - R^T ) \\ 
 &=& 6 - 2\tr(R) \\
 &=&  6 -2 (2\cos  \theta +1)\\
 &=&  4(1 - \cos \theta) ,
\end{eqnarray*}
or $d_F(Q_A,Q_B) = 2 \sqrt{1 - \cos \theta}= 2\sqrt{2} \sin (|\theta|/2) $. Of course this is approximately $ \sqrt{2} |\theta|$  {for small $\theta$ so that $d_R(Q_A,Q_B) \approx d_F(Q_A,Q_B) /  \sqrt{2}$. Note that, apart from the end points, {the line joining $Q_A$ and $Q_B$, which is of length $d_F(Q_A,Q_B)$, does not lie in $SO(3)$}, but like $d_R(Q_A,Q_B)$ is bi-invariant in $SO(3)$~\cite{moakher}.  For more on the very interesting and useful topic of measuring distances between rotations see~\cite{moakher} and citations therein.}

{
\subsection{Product metrics} \label{sect:prodmetrics}
In the above we have considered metrics on $SO(n)$, }
{ now according to our sense of permutations above, for matrices  $A\in{\mathbb R}_s^{n\times n}$  and  $B\in{\mathbb R}_s^{n\times n}$,
 with distinct eigenvalues $\lambda_1(A) > \lambda_2 (A) >\cdots> \lambda_n(A)$ and  $\lambda_1(B) > \lambda_2 (B) >\cdots> \lambda_n(B)$,  we choose their unique representations to be the pairs $(\Lambda_A,Q_A)\in{\mathbb R}^{n\times n} \times SO(n)$ and $(\Lambda_B,Q_Q)\in{\mathbb R}^{n\times n} \times SO(n)$ where the signs of the columns of $Q_A$ and $Q_B$ are chosen such that $\text{det}\,Q_A=\text{det}\, Q_B=1$ and that they lead to a minimal distance $d(Q_A,Q_B)$. Then, 
 given any metric $d_1$ on ${\mathbb R}^{n\times n}$ and $d_2$ on $SO(n)$, we have the product metric
\begin{align}
d_{prod}(A,B) := \sqrt{d_1(\Lambda_A,\Lambda_B)^2+ d_2 (Q_A,Q_B)^2},
\end{align}
 for measuring the distance between $A$ and $B$. }

{While not essential, it will help some readers with a geometric frame of mind to describe some geometric considerations in order to  understand the connection between $d_{prod}(A,B)$ and $ d_2 (Q_A,Q_B)$.  Considering $n=3$, we know that the action of $O(3)$ on $\mathbb{R}^{3\times 3}_s$ partitions the space into orbits  of  orthogonally equivalent matrices.  The orbits are in one to one correspondence with the space of unordered triples $\{ \lambda_1, \lambda_2, \lambda_3 \}$ of eigenvalues. 
Denoting $S_3 = \text{Sym}(1,2,3)$ as the group of  order 6 of bijections from $\{1,2,3\}$ to $\{1,2,3\}$, then,
 topologically, this is $\mathbb{R}/ S_3$, equivalence classes of triples of real numbers up to permutation with the quotient topology. This is an orbifold (roughly speaking a topological space that is locally a finite group quotient of Euclidean space), and fails to be a manifold on the sets where some of the $\lambda_i$ are equal. Away from these singular sets, for example on the subset $U \subset  {\mathbb R}^{3 \times 3}_s$ where $\lambda_1 > \lambda_2 > \lambda_3$, have a product structure and any $A\in U$ has a unique representation as $(Q_A, \Lambda_A)$. However, just as polar coordinates break down at the origin, this breaks down as eigenvalues become equal and $Q_A$ is not uniquely defined.  On a subset of $U$, whose closure is contained in the interior, the product metric $d_{prod}(A,B)$ is compatible with the standard topology on  $\mathbb{R}^{3\times 3}_s$. Indeed, we can write down the map $A \rightarrow (Q_A, \Lambda_A)$ (see Appendix~\ref{sect:appendixexplicteig}) and its inverse explicitly and verify that it is a diffiomorphism. Of course as one approaches the boundary where eigenvalues are equal the  derivative of the inverse map is unbounded.}

\subsection{Semi-metrics and approximate measures of distance} \label{sect:femmes}
In this section, we explore {semi-metrics and} alternatives to the metrics discussed in Section~\ref{sect:commonmetrics} that {provide an approximation of the extent to which $A,B \in \bbR_s^{n\times n}$  are non-commuting when the eigenvalues $\Lambda_A$ and $\Lambda_B$ are distinct, but may become close}. The {semi-metrics we will propose are related to the product metrics discussed in Section~\ref{sect:prodmetrics} and we will show that they can be used to approximate the Riemannian metrics $d_R(Q_A,Q_B)$ for small angles. Our semi-metrics} 
 will  use  $\Lambda_A$ and $\Lambda_B$, but will not
require knowledge of $Q_A$ or $Q_B$, which can be challenging to compute accurately when the eigenvalues become close (we go into further details in Section~\ref{sect:sensmetrics} and also provide additional references in this section). 
For example, we could consider {the semi-metric}
\begin{align}
d_C (A,B):= \|[A,B]\| \nonumber,
\end{align}
{
which does not satisfy a triangular inequality,} but there are  several problems with this. It is homogenous order one in $A$ and in $B$, which makes it difficult to compare the extent to which  matrices commute  in different norms. We can of course use
\begin{align}
\frac{\|[A,B]\|}{ \|A\| \cdot\|B\|}, \nonumber
\end{align}
or some other normalisation such as   $\|AB\|$. However, the result still depends on the difference between the eigenvalues, even though we have normalised by the sum of the squares of the eigenvalues as $\|A\|^2=\|\Lambda_A^2\|$. 
Our aim  is to obtain a measure of non-commutingness that gets around this problem. We will come back to looking at $d_C$.

{
\subsubsection{A semi-metric and approximate measure of distance inspired by the Hoffman-Wielandt Inequality}\label{connect:measEwithmetR} 
}
The Hoffman-Wielandt inequality applied to $A,B\in \bbR_s^{n \times n}$ takes the form~\cite{Hoffman-Wielandt-inequality}
\begin{align}
\min_{\sigma \in S_n} \sum_{i=1}^n | \lambda_i(A) - \lambda_{\sigma(i)}  (B) |^2 \le \| A- B\|^2,
\end{align}
where $S_n$ is the permutation group of $\{1,\ldots,n\}$. Using similar reasoning, an upper bound can be established~\cite{Hoffman-Wielandt-inequality} so that 
\begin{align}
\min_{\sigma \in S_n} \sum_{i=1}^n | \lambda_i(A) - \lambda_{\sigma(i)} (B) |^2 \le \| A- B\|^2 \le \max_{\sigma \in S_n} \sum_{i=1}^n | \lambda_i(A) - \lambda_{\sigma(i)} (B) |^2.
\end{align}
If the distinct eigenvalues are ordered as {$\lambda_1(A) > \lambda_2(A) \ldots > \lambda_n(A)$ and  $\lambda_1(B)>  \lambda_2(B) \ldots > \lambda_n(B)$} then,
\begin{align}
\min_{\sigma \in S_n} \sum_{i=1}^n | \lambda_i(A) - \lambda_{\sigma(i)} (B) |^2 = \sum_{i=1}^n | \lambda_i(A) - \lambda_{i}(B) |^2 \le \| A- B\|^2,
\label{eqn:Hoffman-Wielandt-inequality}
\end{align}
and, in particular, choosing $B=A+E$, with $E$ representing a small symmetric perturbation, leads to the well known result on the sensitivity of numerically computed eigenvalues to rounding (or other) errors (Thm 8.1.4~\cite{vanloon}[pp395]).

Since
\begin{align}
\| A- B \| & = \| Q_A^T \Lambda_A Q_A - Q_B^T \Lambda_B Q_B\|  = \| Q_A^T  ( \Lambda_A Q_A - Q_A Q_B^T \Lambda_B Q_B) \| \nonumber \\
& = \| ( \Lambda_A Q_A -  R  \Lambda_B Q_B)Q_A^T  \|  \nonumber\\
&= \|  \Lambda_A  - R \Lambda_B R^T \| ,
\end{align}
 using (\ref{eqn:rodrigues}) and the small angle approximations {$\sin \theta = \theta+O(\theta^3) $ and $1-\cos \theta = \theta^2/2 +O(\theta^4) $ as $\theta \to 0$} it follows that
\begin{align}
\| A- B \|^2 =  \|  \Lambda_A  -  \Lambda_B  \| ^2 +O(\theta^2),
\end{align}
 as $\theta \to 0$.
However, our interest lies in finding  approximations of $d_R(Q_A,Q_B)$, which is the length of the shortest geodesic curve that connects $Q_A$ and $Q_B$. This suggests that  we  consider {the semi-metric $ \displaystyle \min_{\pm} | d_E^{\pm} (A,B)|$, which does not satisfy a triangular inequality, where $\displaystyle  \min_{\pm}$ indicates the minimum of $| d_E^{+} (A,B)|$ and $| d_E^{-} (A,B)|$ with }
 \begin{align}
 d_E^{\pm} (A,B):= \begin{array}{c} \max_{\sigma \in S_n} \\ \min_{\sigma \in S_n} \end{array} \left ( \pm \|A-B\|^2 \mp  \sum_{i=1}^n | \lambda_i(A) - \lambda_{\sigma(i)} (B) |^2  \right ),
 \end{align} 
 rather than  the absolute difference between $ \|A-B\|^2$ and  $\sum_{i=1}^n | \lambda_i(A) - \lambda_{\sigma(i)} (B) |^2  $ maximised or minimised over the permutations $\sigma \in S_n$.
Furthermore, in order to establish a {connection  with} the metrics discussed in Section~\ref{sect:commonmetrics}, we prove an explicit connection between 
{$\displaystyle \min_{\pm} | d_E^{\pm} (A,B)|$}
and $|\theta|=d_R(Q_A,Q_B) $ for small angles in the following Lemma.

{{
\begin{lemma}\label{lemma:dRconnectdE}
Fixing $\Lambda_A$ and $\Lambda_B$ to have distinct eigenvalues, then there is a rotation through $\theta$ that depends on the eigenvectors $Q_A$ and $Q_B$ such that
\begin{align}
 d_R(Q_A,Q_B)^2 =  \theta^2 =\min_\pm \left |  \frac{d_E^{\pm }(A,B)}{ \tr(K^2 \Lambda_B \Lambda_A) - \tr (K \Lambda_A K \Lambda_B)  } \right |+O(\theta^3)  ,\label{eqn:approxdr}
\end{align}
as $\theta\to 0 $.
\end{lemma}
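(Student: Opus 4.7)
The plan is to Taylor-expand both sides of (\ref{eqn:approxdr}) in the rotation angle $\theta$ and match the quadratic-in-$\theta$ terms. The first move is orthogonal invariance of the Frobenius norm: with $R := Q_A Q_B^T \in SO(3)$,
$$ \|A - B\|^2 = \|\Lambda_A - R \Lambda_B R^T\|^2. $$
Rodrigues' formula (\ref{eqn:rodrigues}) gives $R = I + \theta K + \tfrac{\theta^2}{2}K^2 + O(\theta^3)$, and the adjoint expansion of conjugation then yields $R\Lambda_B R^T = \Lambda_B + \theta [K,\Lambda_B] + \tfrac{\theta^2}{2}[K,[K,\Lambda_B]] + O(\theta^3)$.

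Substituting into $\|\Lambda_A - R\Lambda_B R^T\|^2$ produces a zeroth-order term $\|\Lambda_A - \Lambda_B\|^2$, a linear term proportional to $\langle \Lambda_A - \Lambda_B, [K,\Lambda_B]\rangle$, and a quadratic term that, after rearrangement, collapses to $\langle [K,\Lambda_A], [K,\Lambda_B]\rangle$. The linear coefficient vanishes identically: using cyclicity of trace together with commutativity of the diagonal matrices $\Lambda_A, \Lambda_B$, both $\tr(\Lambda_A [K,\Lambda_B])$ and $\tr(\Lambda_B [K,\Lambda_B])$ reduce to zero. Expanding the quadratic inner product and again applying cyclicity reveals that $\langle [K,\Lambda_A], [K,\Lambda_B]\rangle$ equals, up to a constant factor, the denominator $\tr(K^2 \Lambda_B \Lambda_A) - \tr(K \Lambda_A K \Lambda_B)$ appearing in (\ref{eqn:approxdr}).

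To connect the expansion to $d_E^\pm$, I invoke the Hoffman--Wielandt inequality (\ref{eqn:Hoffman-Wielandt-inequality}): since the eigenvalues are distinct and ordered, the identity permutation minimises $\sum_i|\lambda_i(A)-\lambda_{\sigma(i)}(B)|^2$ and attains $\|\Lambda_A-\Lambda_B\|^2$, while the dual upper bound identifies the reversal as the maximising permutation. Consequently $d_E^\pm$, up to the sign fixed by the $\pm$ choice, equals $\|A-B\|^2 - \|\Lambda_A-\Lambda_B\|^2$, which by the preceding expansion is $\theta^2$ times a constant multiple of the denominator in (\ref{eqn:approxdr}), plus $O(\theta^3)$. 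Dividing through, taking absolute values, and selecting $\min_\pm$ to resolve the sign ambiguity then yields the formula. The left-hand equality $d_R(Q_A,Q_B)^2 = \theta^2$ is immediate from the identity $d_R(Q_A,Q_B) = |\theta|$ already established in Section~\ref{sect:commonmetrics}.

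The principal obstacle is the quadratic-order calculation: the $\theta^2$ coefficient in $\|\Lambda_A - R\Lambda_B R^T\|^2$ receives contributions both from $\|[K,\Lambda_B]\|^2$ and from $\langle \Lambda_A - \Lambda_B, [K,[K,\Lambda_B]]\rangle$, and it is not \emph{a priori} obvious that these assemble into the compact form $\langle [K,\Lambda_A], [K,\Lambda_B]\rangle$. The reduction hinges on $K^T = -K$, on the symmetry of $[K,\Lambda_B]$, and on repeated cyclic trace manipulations, so careful bookkeeping of signs and numerical factors is the main source of potential error. Everything else, namely the vanishing of the linear term and the identification of the optimal permutation, is routine.
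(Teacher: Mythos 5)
Your proposal follows essentially the same route as the paper: use orthogonal invariance to write $\|A-B\|^2 = \|\Lambda_A - R\Lambda_B R^T\|^2$ with $R = Q_A Q_B^T$, expand via Rodrigues to extract the $\theta^2$ coefficient, note the linear term vanishes because $\Lambda_A$, $\Lambda_B$ are diagonal, and connect to $d_E^\pm$ through the Hoffman--Wielandt identification of the minimising permutation. The only real difference is presentational: you repackage the Rodrigues expansion as the adjoint series $R\Lambda_B R^T = \Lambda_B + \theta[K,\Lambda_B] + \tfrac{\theta^2}{2}[K,[K,\Lambda_B]] + O(\theta^3)$ so the quadratic coefficient emerges as the inner product $\langle [K,\Lambda_A],[K,\Lambda_B]\rangle$, whereas the paper multiplies out $\tr(R^T\Lambda_A R\Lambda_B)$ term by term with the full $\sin\theta$ and $(1-\cos\theta)$ expansion and only then simplifies with cyclicity and $K^T=-K$. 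Your commutator form is a tidier bookkeeping device but not a substantively different argument. One thing you leave unverified is the precise constant relating $\langle[K,\Lambda_A],[K,\Lambda_B]\rangle$ to the denominator $\tr(K^2\Lambda_B\Lambda_A)-\tr(K\Lambda_A K\Lambda_B)$ (the algebra gives a factor of $-2$, since $\langle[K,\Lambda_A],[K,\Lambda_B]\rangle = -\tr(\Lambda_A[K,[K,\Lambda_B]])$, and since $d_E^+ = -2\bigl(\tr(AB)-\tr(\Lambda_A\Lambda_B)\bigr)$ rather than $-\bigl(\tr(AB)-\tr(\Lambda_A\Lambda_B)\bigr)$); to close the lemma as stated you would need to pin this down, and carrying it through carefully is worthwhile as it also stress-tests the constant in the displayed formula (\ref{eqn:approxdr}).
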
}
\begin{proof}
Writing $A= Q_A^T \Lambda_A Q_A$ and $B=Q_B^T \Lambda_B Q_B$, then
\begin{align}
\| A-B \|^2 & = \| \Lambda_A\|^2 + \| \Lambda_B\|^2 - \tr(AB) - \tr(BA), \nonumber \\
\|\Lambda_B-\Lambda_A\|^2 & = \| \Lambda_A\|^2 + \|\Lambda_B\|^2 - 2 \tr ( \Lambda_A \Lambda_B).
\end{align}
Noting that $ \tr(AB) = \tr(BA) = \tr(R^T \Lambda_A R \Lambda_B) $, where $R=  Q_A Q_B^T$,  applying the Rodrigues angle formula (\ref{eqn:rodrigues}), and $K^T=-K$  it follows that
\begin{align}
\tr(AB) = \tr(BA) =  &\tr \left ( \Lambda_A \Lambda_B + \left(  \Lambda_A K \Lambda_B + K^T \Lambda_A \Lambda_B \right) \sin \theta
+ 2 K^2 \Lambda_A \Lambda_B ( 1- \cos \theta) + \right . \nonumber \\
& \left . 
K^T \Lambda_A K \Lambda_B \sin^2 \theta + \left(  K^T \Lambda_A K^2 \Lambda_B + K^2 \Lambda_A K \Lambda_B \right )\sin \theta {(1-\cos \theta)} \right. \nonumber \\
&\left . +
K^2 \Lambda_A K^2 \Lambda_B (1-\cos \theta)^2
\right ) \nonumber \\
=  &\tr \left ( \Lambda_A \Lambda_B + 2 K^2 \Lambda_A \Lambda_B ( 1- \cos \theta) +
K^T \Lambda_A K \Lambda_B \sin^2 \theta +  \right . \nonumber \\
& \left . 
\left(  K^2 \Lambda_A K \Lambda_B - K^2 \Lambda_B K \Lambda_A \right )\sin \theta {(1-\cos \theta)}
+
K^2 \Lambda_A K^2 \Lambda_B (1-\cos \theta)^2
\right ) .
\end{align}
Next, by considering the small angle approximations $\sin \theta = \theta+O(\theta^3)$, $1-\cos \theta  = \theta^2/2+O(\theta^4)$ as $\theta \to 0$, we have
\begin{align}
d_E^{\pm}(A,B)  =& \mp\theta^2 \left ( \tr(K^2 \Lambda_B \Lambda_A) - \tr (K \Lambda_A K \Lambda_B)  \right )\nonumber  \\
& \mp \frac{\theta^3}{2} \left (  \tr( K^2 \Lambda_A  K \Lambda_B ) - \tr( K^2 \Lambda_B K \Lambda_A) \right ) \mp \frac{\theta^4}{4}\tr ( K^2   \Lambda_A K^2 \Lambda_B) +O(\theta^4)
\nonumber \\
=& \mp\theta^2 \left ( \tr(K^2 \Lambda_B \Lambda_A) - \tr (K \Lambda_A K \Lambda_B)  \right ) +O(\theta^3),
 \label{eqn:emeasureexpand1} 
\end{align}
as $\theta \to 0$. {Since we wish to make comparisons with $d_R(Q_A,Q_B)$, which measures the shortest geodesic curve that connects $Q_A$ and $Q_B$, we consider $\min_{\pm}$ which immediately leads to the desired result.}
\end{proof}}

{
The practical approximation of $|\theta|$ using $d_E^{\pm }(A,B)$ is aided by the following result.
\begin{lemma} \label{lemma:dRconnectdEapprox}
The normalising constant in Lemma~\ref{lemma:dRconnectdE} has the form  $\tr(K^2 \Lambda_B \Lambda_A) - \tr (K \Lambda_A K \Lambda_B)  =k^TM(A,B)k$ where
 $M(A,B)$ is diagonal, is non-positive definite and for $n=3$ and distinct eigenvalues has coefficients
\begin{subequations} \label{eqn:Menteries}
\begin{align}
(M(A,B))_{11} = &- (\lambda_{2}(A) - \lambda_{3}(A)) (\lambda_{2}(B) - \lambda_{3}(B)) < 0,  \\
(M(A,B))_{22} = & - (\lambda_{1}(A) - \lambda_{3}(A)) (\lambda_{1}(B) - \lambda_{3}(B)) < 0, \\
(M(A,B))_{33} = & - (\lambda_{1}(A) - \lambda_{2}(A)) (\lambda_{1}(B) - \lambda_{2}(B))  < 0,
\end{align}
\end{subequations}
and the estimate
\begin{align}
\min_i (M(A,B))_{ii}  \le &k^T M(A,B) k \le \max_i (M(A,B))_{ii} \label{eqn:bdconstems} ,
\end{align}
holds where the bounds do not require knowledge of $K$.
\end{lemma}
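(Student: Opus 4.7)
The plan is to expand both trace expressions as quadratic forms in the components $k_1,k_2,k_3$ of the unit rotation axis $k$, working in the coordinate system in which $\Lambda_A$ and $\Lambda_B$ are already diagonal. Once the dependence on $k$ is isolated, the structure of $M(A,B)$ emerges after a single algebraic simplification.

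First I would use $K^2 = -I + kk^T$ (noted immediately after the Rodrigues formula (\ref{eqn:rodrigues})) to obtain
\[
\tr(K^2 \Lambda_B \Lambda_A) = -\tr(\Lambda_A \Lambda_B) + k^T (\Lambda_A \Lambda_B) k,
\]
which depends only on the eigenvalues $\lambda_i(A), \lambda_i(B)$ and the squares $k_i^2$. Second, I would compute $\tr(K \Lambda_A K \Lambda_B)$ directly from the explicit $3\times 3$ skew form of $K$, using the fact that for $n=3$ the off-diagonal entries satisfy $K_{ij}^2 = k_m^2$ whenever $\{i,j,m\}$ is a permutation of $\{1,2,3\}$. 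This again produces a quadratic form in the $k_i$, with coefficients bilinear in the eigenvalues of $A$ and $B$.

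The third and key step is to subtract these two expressions and use the unit-norm constraint $k_1^2 + k_2^2 + k_3^2 = 1$ to absorb the constant $-\tr(\Lambda_A \Lambda_B)$ into the quadratic form. After this reorganisation the mixed $\lambda_i(A)\lambda_i(B)$ terms cancel, and the coefficient of $k_i^2$ collapses to the product $-(\lambda_j(A)-\lambda_k(A))(\lambda_j(B)-\lambda_k(B))$ with $\{i,j,k\} = \{1,2,3\}$, producing the diagonal structure of $M(A,B)$ and the entries (\ref{eqn:Menteries}). Strict negativity of these entries is immediate from the assumed ordering $\lambda_1 > \lambda_2 > \lambda_3$ of the distinct eigenvalues of both $A$ and $B$, and the non-positive definiteness of $M(A,B)$ follows at once.

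For the bound (\ref{eqn:bdconstems}), since $k$ is a unit vector the quantities $k_i^2 \ge 0$ satisfy $\sum_i k_i^2 = 1$, so
\[
k^T M(A,B) k = \sum_{i=1}^3 (M(A,B))_{ii}\, k_i^2
\]
is a convex combination of the diagonal entries and must lie between their minimum and maximum; these bounds depend only on the eigenvalues of $A$ and $B$, not on $K$. The only real obstacle is the bookkeeping in the first two steps; the substantive content of the lemma is the cancellation in step three that turns a generic bilinear expression in $\lambda_i(A)\lambda_j(B)$ into the clean product form of the diagonal entries.
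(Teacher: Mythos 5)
Your proof is correct and takes essentially the same route as the paper: a direct computation of both trace terms as quadratic forms in $k_1,k_2,k_3$, followed by using $k^Tk=1$ for the bound. The paper's proof simply writes down the resulting quadratic form without showing the intermediate work, whereas you organize the calculation more cleanly by exploiting $K^2=-I+kk^T$ to handle $\tr(K^2\Lambda_B\Lambda_A)$ and the unit-norm constraint to absorb the $-\tr(\Lambda_A\Lambda_B)$ constant; these are presentational refinements rather than a different method.
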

 \begin{proof}
Without loss of generality, we consider $n=3$ and note that
\begin{align}
\tr(K^2 \Lambda_B \Lambda_A) - \tr (K \Lambda_A K \Lambda_B) = &  k_1^2 \left(  - \lambda_{2}(A) \lambda_{2}(B) - \lambda_3 (A) \lambda_3(B) + \lambda_3(A) \lambda_2(B) + \lambda_2 (A) \lambda_3(B) \right ) \nonumber \\
& + k_2^2 \left(  - \lambda_{1}(A) \lambda_{1}(B) - \lambda_3 (A) \lambda_3(B) + \lambda_3(A) \lambda_1(B) + \lambda_1 (A) \lambda_3(B) \right ) \nonumber \\
& + k_3^2 \left(  - \lambda_{1}(A) \lambda_{1}(B) - \lambda_2 (A) \lambda_2(B) + \lambda_2(A) \lambda_1(B) + \lambda_1 (A) \lambda_2(B) \right ) \nonumber\\
=& k^T M(A,B) k,
\end{align}
where $M(A,B)$ is diagonal with coefficients given in (\ref{eqn:Menteries}) and is non-positive definite, which establishes  the first part of the result. Given the properties of $M$ we have
\begin{align}
\min (M(A,B))_{ii} {k}^Tk  \le & k^T M(A,B) k \le \max (M(A,B))_{ii}  k^Tk, \nonumber \end{align}
and since
$|k|^2 = k^T k =1$,  we immediately obtain (\ref{eqn:bdconstems}), which completes the proof.
\end{proof}}
\begin{remark}
Furthermore, since
\begin{align}
\Lambda( \text{dev}\, (A)) =\Lambda(A -(1/3) \tr(A) I ) =\text{diag}\, ( \lambda_2(A) -\lambda_3(A),
\lambda_1(A)-\lambda_3(A),
\lambda_1(A)-\lambda_2(A)),
\end{align}
then $M = - \Lambda( \text{dev}\, (A))  \Lambda( \text{dev}\, (B))$ and this also means that
\begin{align} 
-  \| A-(1/3)\tr(A) I \|_{op}   \| B-(1/3) \tr(B)I  \|_{op}   \le & k^T M k \le 0 , \nonumber 
\end{align}
since $ \rho (\text{dev}\, (A)) \le  \| A-(1/3) \tr(A) I\|_{op}$,
which we expect to be less sharp than (\ref{eqn:bdconstems}).
\end{remark}

{
{
Using our unique representations $A\in {\mathbb R}_s^{n \times n}$ as  $(\Lambda_A,Q_A)\in{\mathbb R}^{n\times n} \times SO(n)$ and $B\in {\mathbb R}_s^{n \times n}$  as  $(\Lambda_B,Q_Q)\in{\mathbb R}^{n\times n} \times SO(n)$ 
introduced above,} the results of Lemmas~\ref{lemma:dRconnectdE} and~\ref{lemma:dRconnectdEapprox}  can be used to estimate a minimal $|\theta| =d_R(Q_A,Q_B)$ from $d_E^{\pm}(A,B)$, which we denote by $d_{E,\theta}(A,B)$. {Due to the restriction of distinct eigenvalues  $k^T M k< 0$, but if  the algebraic multiplicity of the eigenvalues of $A$ or $B$ is greater than one, then  $k^T M k \le 0$.}

{
\subsubsection{A semi-metric and approximate measure of distance  inspired by the commutator $[A,B]$}
Proceeding in a similar manner as Section~\ref{connect:measEwithmetR}, we seek to identify a connection between the semi-metric $d_C(A,B)$} and $d_R(Q_A,Q_B)$. First, we establish that
$d_C(A,B)$ is related to the angle  $\phi :=\angle(AB,BA) = \cos^{-1} \langle AB, BA\rangle/(\|AB\|\cdot \|BA\|)$
by expanding
\begin{align*}
d_C(A,B)^2 = \| [A,B]\|^2 &= \tr((AB-BA)(AB-BA)^T)\\
&=-\tr (ABAB +BA BA) + \tr ( AB BA + BA AB) \\
&= - \tr( AB+BA)^2 + \tr( AB BA + BA AB +AB BA +BA AB) ,
\end{align*}
and noting $\|AB\|^2 =\tr(BAAB)=\tr(ABBA)=\|BA\|^2$ and  $ \tr(ABAB)= \langle AB, BA\rangle$,  so that
\begin{align*}
d_C(A,B)^2 =& 2 (\| AB \|^2 + \| BA \|^2) - \tr (AB +BA)^2\\
 & = 2 (\| AB \|^2 + \| BA \|^2)  - ( \| AB \|^2 + \| BA \|^2 + \langle AB, BA \rangle + \langle  BA,AB \rangle ) \\ 
& = 2 \| AB \|^2 - 2 \langle AB, BA \rangle,
 \end{align*}
 then, for small $\phi$,
\begin{align}
\frac{d_C(A,B)} {\| AB \|} = \frac{\| [A,B]\|}{\| AB \|}  =  \sqrt{2 ( 1- \cos \phi)} \approx |\phi |.
\end{align}
}

However,  a relationship between  $|\theta| = d_R(Q_A,Q_B)$ and $d_C(A,B)$ is preferred, which is provided by the following result.
\begin{lemma}\label{lemma:dRconnectdC}
{Fixing $\Lambda_A$ and $\Lambda_B$ with distinct eigenvalues, there is a rotation through $\theta$ that depends on the eigenvectors such that
\begin{align}
 d_R(Q_A,Q_B)^2 = \frac{d_C (A,B)^2 }{
 C (K, \Lambda_A, \Lambda_B)
   }+ O(\theta^3) \label{eqn:approxdrdc},
\end{align}
as $\theta \to 0$ where
\begin{align}
 C (K, \Lambda_A, \Lambda_B) := &2   \tr \left(  \Lambda_A^2 K^2 \Lambda_B^2 - \Lambda_A^2 K \Lambda_B^2 K \right )  - \nonumber \\
 & 4 \tr \left ( K \Lambda_A \Lambda_B K \Lambda_A \Lambda_B +  K^2 \Lambda_A^2 \Lambda_B^2 - K\Lambda_A K \Lambda_A \Lambda_B^2 - K \Lambda_B K \Lambda_B \Lambda_A^2 \right ) \nonumber.
\end{align}}
\end{lemma}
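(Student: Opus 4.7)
The plan is to mirror the approach of Lemma~\ref{lemma:dRconnectdE}: reduce $d_C(A,B)^2$ to a quantity depending only on the single rotation $R = Q_A Q_B^T$ and the diagonal matrices $\Lambda_A$, $\Lambda_B$, then insert Rodrigues' formula (\ref{eqn:rodrigues}) and read off the leading-order coefficient in $\theta$.

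First I would start from the identity derived immediately before the lemma, $d_C(A,B)^2 = 2\tr(ABBA) - 2\tr(ABAB)$, and substitute $A = Q_A^T \Lambda_A Q_A$ and $B = Q_B^T \Lambda_B Q_B$. Cyclic invariance of the trace removes the outer orthogonal factors, leaving
\begin{align*}
d_C(A,B)^2 = 2\tr(\Lambda_A^2 R \Lambda_B^2 R^T) - 2\tr(R^T \Lambda_A R \Lambda_B R^T \Lambda_A R \Lambda_B),
\end{align*}
with $R = Q_A Q_B^T$ and $R^T = Q_B Q_A^T$. Next I would insert $R = I + \sin\theta\,K + (1-\cos\theta)K^2$ (and its transpose, using $K^T = -K$) together with the small-angle expansions $\sin\theta = \theta + O(\theta^3)$ and $1-\cos\theta = \theta^2/2 + O(\theta^4)$ into both traces. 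The $\theta^0$ contributions are $2\tr(\Lambda_A^2\Lambda_B^2) - 2\tr(\Lambda_A\Lambda_B\Lambda_A\Lambda_B)$, which cancel since diagonal matrices commute; the $\theta^1$ contributions cancel by the same fact together with cyclic invariance. Gathering the surviving $\theta^2$ contributions and simplifying using cyclicity and the commutativity of $\Lambda_A$ with $\Lambda_B$ should produce exactly $\theta^2\,C(K,\Lambda_A,\Lambda_B)$ with the stated four-term expression. Dividing through by $C(K,\Lambda_A,\Lambda_B)$ and recalling $|\theta| = d_R(Q_A,Q_B)$ completes the proof.

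The principal obstacle is the combinatorial expansion of the four-factor trace $\tr(R^T \Lambda_A R \Lambda_B R^T \Lambda_A R \Lambda_B)$: each of the four rotation factors contributes three Rodrigues terms, so naively sixteen distinct $\theta^2$ contributions arise (those with one $K^2$ entering a single factor, and those with two $K$s entering two distinct factors). Organising these systematically, pairing the $K$--$K$ cross terms from different positions against the $K^2$ self-terms, and repeatedly applying cyclicity together with the fact that $\Lambda_A$ and $\Lambda_B$ commute, is where arithmetic care is required; however no new conceptual ingredient beyond those used in Lemma~\ref{lemma:dRconnectdE} is needed, and the corresponding computation on $2\tr(\Lambda_A^2 R \Lambda_B^2 R^T)$ is straightforward and contributes the first bracket $2\tr(\Lambda_A^2 K^2 \Lambda_B^2 - \Lambda_A^2 K \Lambda_B^2 K)$ of $C(K,\Lambda_A,\Lambda_B)$.
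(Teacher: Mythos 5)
Your proposal is correct and follows essentially the same route as the paper: diagonalise $A$ and $B$, use cyclicity to reduce to $d_C(A,B)^2 = 2\tr(\Lambda_A^2 R \Lambda_B^2 R^T) - 2\tr(R^T\Lambda_A R\Lambda_B R^T\Lambda_A R\Lambda_B)$ with $R=Q_AQ_B^T$, expand via Rodrigues' formula, and extract the $\theta^2$ coefficient as $C(K,\Lambda_A,\Lambda_B)$. Your observations about the vanishing of the $\theta^0$ and $\theta^1$ contributions and the origin of the two brackets of $C$ agree with the paper's explicit expansion.
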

\begin{proof}
Using $A=Q_A^T \Lambda_A Q_A$ and $B =Q_B^T \Lambda_B Q_B$ it follows that
\begin{align*}
\| AB \|^2 = & \tr ( \Lambda_A^2 R \Lambda_B^2 R^T) \nonumber \\
=& \tr \left (
\Lambda_A^2 \Lambda_B^2 + (\Lambda_A^2 K \Lambda_B^2 K^T) \sin^2 \theta + (\Lambda_A^2 K^2 \Lambda_B^2 + \Lambda_A^2 \Lambda_B^2 K^2 ) (1-\cos \theta) \right . \nonumber \\
 &\left . +( \Lambda_A^2 K \Lambda_B^2 K^2 + \Lambda_A^2 K^2 \Lambda_B^2 K^T) \sin \theta (1-\cos \theta)  + \Lambda_A^2 K^2 \Lambda_B^2 K^2 (1-\cos \theta)^2
\right ),
 \end{align*}
 which, for small angles, becomes
\begin{align*}
 \| AB \|^2 = & \tr \left ( \Lambda_A^2 \Lambda_B^2 + ( \Lambda_A^2 K^2 \Lambda_B^2 - \Lambda_A^2 K \Lambda_B^2 K ) \theta^2 
 + ( \Lambda_A^2 K \Lambda_B^2 K^2 - \Lambda_A^2 K^2 \Lambda_B^2 K) \frac{\theta^3}{2} \right . \nonumber \\
 & \left . + \Lambda_A^2 K^2 \Lambda_B^2 K^2 \frac{\theta^4}{4} 
 \right)+O(\theta^4),\nonumber \\
= & \tr \left ( \Lambda_A^2 \Lambda_B^2 + ( \Lambda_A^2 K^2 \Lambda_B^2 - \Lambda_A^2 K \Lambda_B^2 K ) \theta^2 \right ) +O(\theta^3),
 \end{align*}
 as $\theta \to 0$. In a similar way,
 \begin{align*}
 \langle AB, BA \rangle & = \tr ( R^T \Lambda_A R \Lambda_B R^T \Lambda_A R \Lambda_B)\\
&  = \tr \left (
 \Lambda_A^2 \Lambda_B^2 +2 (K \Lambda_A \Lambda_B K \Lambda_A \Lambda_B- K\Lambda_A K \Lambda_A \Lambda_B^2 - K \Lambda_B K \Lambda_B \Lambda_A^2) \sin^2 \theta \right .
 \nonumber \\
 & \left . + 4K^2 \Lambda_A^2 \Lambda_B^2 (1-\cos \theta) + \ldots
 \right ) \\
 & =  \tr ( \Lambda_A^2 \Lambda_B^2  + 2 ( K \Lambda_A \Lambda_B K \Lambda_A \Lambda_B- K\Lambda_A K \Lambda_A \Lambda_B^2 - K \Lambda_B K \Lambda_B \Lambda_A^2
 + K^2 \Lambda_A^2 \Lambda_B^2) \theta^2  + O(\theta^3),
 \end{align*}
as $\theta \to 0$.  Thus,
 \begin{align}
d_C(A,B)^2 =  \| [A,B]\|^2 = & 2 \| AB \|^2 - 2 \langle AB, BA \rangle  \nonumber \\
  = & 2   \tr \left(  \Lambda_A^2 K^2 \Lambda_B^2 - \Lambda_A^2 K \Lambda_B^2 K \right ) \theta^2 \nonumber \\
& - 4 \tr \left ( K \Lambda_A \Lambda_B K \Lambda_A \Lambda_B +  K^2 \Lambda_A^2 \Lambda_B^2 - K\Lambda_A K \Lambda_A \Lambda_B^2 - K \Lambda_B K \Lambda_B \Lambda_A^2 \right )\theta^2 +O(\theta^3) \nonumber\\
=&  C (K, \Lambda_A, \Lambda_B) \theta^2 +O(\theta^3),
 \end{align} 
 as $\theta \to 0$, which immediately leads to the desired result. 
 \end{proof}
 
{
The practical approximation of $|\theta|$ using $d_C(A,B)$ is aided by the following result.
 \begin{lemma}
 The normalising constant  $C(K,\Lambda_A,\Lambda_B)$  in Lemma~\ref{lemma:dRconnectdC} has the form $C(K,\Lambda_A,\Lambda_B) = k^T N(A,B) k$ where
 $N(A,B)$ is diagonal, is non-negative definite and for $n=3$ and distinct eigenvalues has coefficients
\begin{subequations} \label{eqn:Nconstest}
\begin{align}
(N(A,B))_{11} = & 2 (\lambda_2(A)-\lambda_3(A))^2 ( \lambda_2(B) - \lambda_3(B))^2 > 0, \\
  (N(A,B))_{22} = & 2 (\lambda_1(A)-\lambda_3(A))^2 ( \lambda_1(B) - \lambda_3(B))^2 > 0, \\
 (N(A,B))_{33} = & 2 (\lambda_1(A)-\lambda_2(A))^2 ( \lambda_1(B) - \lambda_2(B))^2 > 0,
\end{align}
\end{subequations}
and the estimate
\begin{align}
\min (N(A,B))_{ii}  \le &   C (K, \Lambda_A, \Lambda_B) \le \max (N(A,B))_{ii} ,\label{eqn:bdconstemsdc} 
\end{align}
holds, where the bounds do not require knowledge of $K$.
\end{lemma}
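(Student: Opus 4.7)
My plan is to prove this by direct computation, mirroring the structure of Lemma~\ref{lemma:dRconnectdEapprox}. The task is essentially algebraic bookkeeping: expand $C(K,\Lambda_A,\Lambda_B)$ in coordinates, verify the quadratic form in $k$ is diagonal, read off the entries, and then apply $\|k\|=1$ to obtain the bound.

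First, I would fix $n=3$ and write $K$ explicitly as the skew-symmetric matrix with entries $K_{12}=-k_3$, $K_{13}=k_2$, $K_{23}=-k_1$ (and the transpose entries with opposite signs), along with $\Lambda_A=\text{diag}(\lambda_1(A),\lambda_2(A),\lambda_3(A))$ and similarly for $\Lambda_B$. Then I would compute $K^2$, which is symmetric with diagonal entries $(K^2)_{ii}=-(k_j^2+k_\ell^2)$ for $\{i,j,\ell\}=\{1,2,3\}$ and off-diagonal entries $k_ik_j$, and $\Lambda_A^2$, $\Lambda_B^2$ (diagonal). The next step is to evaluate each of the six traces that appear in the definition of $C(K,\Lambda_A,\Lambda_B)$: $\tr(\Lambda_A^2 K^2 \Lambda_B^2)$, $\tr(\Lambda_A^2 K \Lambda_B^2 K)$, $\tr(K\Lambda_A\Lambda_B K\Lambda_A\Lambda_B)$, $\tr(K^2\Lambda_A^2\Lambda_B^2)$, $\tr(K\Lambda_A K\Lambda_A\Lambda_B^2)$, and $\tr(K\Lambda_B K\Lambda_B\Lambda_A^2)$. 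Since the diagonal entries of any product of the form $(\text{diag}) K (\text{diag}) K$ involve only $k_i^2$ terms (the $k_ik_j$ cross terms live off-diagonal and are killed when multiplied by another diagonal factor), every trace reduces cleanly to a sum $\alpha_1 k_1^2+\alpha_2 k_2^2+\alpha_3 k_3^2$ with no cross terms, which immediately shows that $N(A,B)$ is diagonal.

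After collecting like terms in $k_1^2,k_2^2,k_3^2$, I would verify the $(N)_{11}$ coefficient factors as $2(\lambda_2(A)-\lambda_3(A))^2(\lambda_2(B)-\lambda_3(B))^2$; the structural symmetry under $1\leftrightarrow 2 \leftrightarrow 3$ then gives $(N)_{22}$ and $(N)_{33}$ by inspection, matching (\ref{eqn:Nconstest}). Since the eigenvalues are distinct, each diagonal entry is strictly positive, establishing non-negative definiteness (in fact, positive definiteness).

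For the estimate (\ref{eqn:bdconstemsdc}), I would simply use the Rayleigh-type bound: because $N(A,B)$ is diagonal with positive entries and $\|k\|^2 = k_1^2+k_2^2+k_3^2 = 1$,
\begin{align*}
\min_i (N(A,B))_{ii} \le k^T N(A,B) k = \sum_{i=1}^3 (N(A,B))_{ii}\, k_i^2 \le \max_i (N(A,B))_{ii},
\end{align*}
and the bounds depend only on the eigenvalues of $A$ and $B$, not on $K$.

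The main obstacle is keeping track of signs and indices through the six traces without error; the $\tr(K\Lambda_A\Lambda_B K\Lambda_A\Lambda_B)$ term in particular mixes $\Lambda_A$ and $\Lambda_B$ inside each $K$-sandwich and is easiest to verify by observing that $\Lambda_A\Lambda_B$ is itself diagonal, so it reduces to the same pattern as the simpler traces with $\Lambda_A\Lambda_B$ playing the role of a single diagonal factor. Once the cancellations between the first group $2\tr(\Lambda_A^2K^2\Lambda_B^2 - \Lambda_A^2K\Lambda_B^2K)$ and the second group $-4\tr(\cdots)$ are carried out, the squared-difference structure $(\lambda_i(A)-\lambda_j(A))^2(\lambda_i(B)-\lambda_j(B))^2$ emerges naturally and confirms the claimed factorisation.
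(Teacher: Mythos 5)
Your proposal is correct and takes essentially the same route as the paper's proof: the paper simply asserts ``an explicit computation using $n=3$ establishes'' that $C(K,\Lambda_A,\Lambda_B)=k^TN(A,B)k$ with the stated diagonal $N$, and then applies the identical Rayleigh-quotient argument with $k^Tk=1$. Your write-up just unfolds the omitted bookkeeping — in particular, your observation that every trace reduces to a form $\sum_{i,j}K_{ij}K_{ji}(\cdot)=-\sum_{i,j}K_{ij}^2(\cdot)$ because the other factors are diagonal, which kills all $k_ik_j$ cross terms, is exactly why $N$ comes out diagonal without further work.
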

 \begin{proof}
 Without loss of generality, an explicit computation using $n=3$ establishes that
 \begin{align}
 C (K, \Lambda_A, \Lambda_B)=k^T N(A,B) k , \nonumber
\end{align}
where $N(A,B)$ is diagonal and non-negative definite with coefficents stated in (\ref{eqn:Nconstest}).  Given the properties of $N$ we have
\begin{align}
\min (N(A,B))_{ii} k^Tk \le &   C (K, \Lambda_A, \Lambda_B) \le \max (N(A,B))_{ii} k^Tk , 
\end{align}
and since $k^Tk=1$, we immediately obtain (\ref{eqn:bdconstemsdc}), which completes the proof.
 \end{proof}
}
{
Using our unique representations $A\in {\mathbb R}_s^{n \times n}$ as  $(\Lambda_A,Q_A)\in{\mathbb R}^{n\times n} \times SO(n)$ and $B\in {\mathbb R}_s^{n \times n}$  as  $(\Lambda_B,Q_Q)\in{\mathbb R}^{n\times n} \times SO(n)$ 
introduced above,} we denote the approximation of $d_R(Q_A,Q_B)$ obtained from $d_C (A,B)$ as  $d_{C,\theta}(A,B)$. 

{
\subsection{Rotational invariance of the metrics, semi-metrics and approximate distance measures }}
We wish to explore whether the metrics and {semi-metrics} proposed above are invariant to rotation of the dataset. This is crucial 
since, in Section~\ref{sect:mpt}, we will explore their application to a complex symmetric rank two tensor characterisation of a conducting magnetic object where the hidden object's orientation is not known a-priori. In this application,  the real and imaginary tensor coefficients will be arranged as real symmetric matrices, $A, B \in \bbR^{3 \times 3}_s$, say. Given that the characterisation of the objects are tensorial, the coefficients of the characterisation will transform in a natural way under rotation of the coordinate system by an orthogonal rotation matrix $Q_{Rot}$. This means that $A$ and $B$ transform to $A' = Q_{Rot} A Q_{Rot}^T$ and $B' = Q_{Rot} B Q_{Rot}^T$, respectively, in the transformed system and, similarly, $Q_{A'}=Q_{Rot} Q_A$ and $Q_{B'}=Q_{Rot} Q_B$ for matrices of eigenvectors with $\lambda_i(A) = \lambda_i (A')$ and $\lambda_i(B)=\lambda_i(B')$ being unchanged under the transformation. We need the metrics and measures to be similarly invariant under object rotation for classification purposes as the hidden object's orientation is unknown.

First, considering $d_R(Q_{A'},Q_{B'})$, then
\begin{align}
d_R(Q_{A'},Q_{B'}) =  \frac{1}{2} \| \log (Q_{A'} Q_{B'} )\| = \frac{1}{2} \| \log ( Q_{Rot} Q_A Q_B^T Q_{Rot}^T )\| \nonumber ,
\end{align}
and properties of the matrix exponential and matrix logarithm tell us that $\exp( S M S^{-1}) = S e^M S^{-1}$ so that $S M S^{-1} = \log ( S e^M S^{-1}) $. Setting,  $e^M= Q_A Q_B^T $ and $S=Q_{Rot}$ so that $M = \log ( Q_A Q_B^T)$ and  $S^{-1} = Q_{Rot}^{-1}$ then
\begin{align}
d_R(Q_{A'},Q_{B'}) & =  \frac{1}{2} \|   Q_{Rot}  \log( Q_A Q_B^T) Q_{Rot}^T \|  \nonumber \\
&=\frac{1}{2} \text{tr} \,( (  Q_{Rot}  \log( Q_A Q_B^T) Q_{Rot}^T) (  Q_{Rot}  \log( Q_A Q_B^T) Q_{Rot}^T)^T )^{1/2} \nonumber \\
& =\frac{1}{2} \text{tr} \,(    \log( Q_A Q_B^T)  ( \log( Q_A Q_B^T)^T ))^{1/2 } \nonumber \\
& = \frac{1}{2} \|    \log( Q_A Q_B^T)  \| = d_R(Q_A,Q_B),
\end{align}
by properties of Frobenius norm and the trace operator. Hence, the metric $d_R(Q_A,Q_B)$ is invariant under rotation of the coordinate system. 

Second, considering  $d_F(Q_{A'},Q_{B'})$, then in a similar way
\begin{align}
d_F(Q_{A'},Q_{B'}) & =  \|  Q_{A'} - Q_{B'} \|^2  \nonumber \\
& = \text{tr} \, ( Q_{Rot} (Q_A-Q_B) (Q_{Rot} (Q_A-Q_B) )^T) \nonumber\\
& = \text{tr} \, ( (Q_A-Q_B) (Q_A-Q_B) ^T) = d_F(Q_A,Q_B),
\end{align}
and so $d_F(Q_A,Q_B)$ is also  invariant under rotation of the coordinate system. 

Third, for $d_E^\pm(A',B')$, we need only consider $\| A' - B'\|^2 $ since we know that $\lambda_i(A') = \lambda_i(A)$ and  $\lambda_i(B') = \lambda_i(B)$. We have that
 \begin{align}
\| A' - B'\|^2  & =  \|  Q_{Rot} (A-B) Q_{Rot}^T \|^2  \nonumber \\
& = \text{tr} \, ( Q_{Rot} (A-B) (Q_{Rot} (A-B) )^T) \nonumber\\
& = \text{tr} \, ( (A-B) (A-B) ^T) = \| A - B\|^2,
\end{align}
and so $d_E^\pm(A,B)$ is also  invariant under rotation of the coordinate system. No further difficulties are encountered  when computing $ d_{E,\theta}(A,B)$ as the normalisation only involves eigenvalues.

Finally, for $d_C(A',B')$, we consider 
\begin{align}
d_C(A',B')= \| [A',B'] \|^2 &= \| A' B'  -B' A' \|^2 \nonumber\\
& = \| Q_{Rot} (AB-BA) Q_{Rot}^T \|^2 \nonumber \\
& = \text{tr}(  Q_{Rot} (AB-BA) Q_{Rot}^T ) ( (  Q_{Rot} (AB-BA) Q_{Rot}^T )^T ) \nonumber \\
& = \| [A,B] \|^2 =d_C(A,B),
\end{align}
and so $d_C(A,B)$ is also  invariant under rotation of the coordinate system. Once again, no further difficulties are encountered  when computing $ d_{C,\theta}(A,B) $ as the normalisation only involves eigenvalues.

As we have seen $R= Q_A Q_B^T = \exp (\theta K)$ and we have already established that $d_R(Q_A,Q_B) = |\theta| = d_R(Q_{A'},Q_{B'})= |\theta'|$ and so it useful to establish the relationship between $K$ and $K'$. Firstly, $R' = Q_{A'}Q_{B'}^T = Q_{Rot} R Q_{Rot}^T$ and so
\begin{align}
\theta' K'= \log R' = & \log ( Q_{Rot} R Q_{Rot}^T)  \nonumber \\
	=&  \log ( Q_{Rot} R Q_{Rot}^T)   \nonumber \\
	=&   Q_{Rot} \log (R) Q_{Rot}^T = \theta Q_{Rot} K Q_{Rot}^T	 .
\end{align}
Since we know that $|\theta| = |\theta'|$ then $|K'| = | Q_{Rot} K Q_{Rot}^T|$ and, hence, $|K|$ transforms like a rank two tensor. Since $K$ has non-zero components related to the unit vector ${ k}$, then $\lambda_i(K) = \{0, \pm \im \}$ and, hence, its eigenvalues do not provide additional information.
{
\subsection{Sensitivity of metrics, semi-metrics and approximate distance measures in finite precision arithmetic}}
\label{sect:sensmetrics}
Given the  finite precision of numerical linear algebra,
we consider the behaviour of the metrics $d_R(Q_A,Q_B)$, $d_F(Q_A,Q_B)$, {the semi-metrics $|d_E^\pm (A,B)|$ and  $ d_C(A,B)$
and the associated approximate measures of distance}  $ d_{E,\theta} (A,B)$ and  $ d_{C,\theta} (A,B)$, under small perturbations in $A$ and $B$ for matrices with coefficients $|(A)_{ij}|<1$ and $|(B)_{ij}|<1$. In particular, we  consider their behaviour  when distance between the eigenvalues of $A$ and the distance between the eigenvalues of  $B$  are small.
To aid with this, we first recall results from Wilkinson~\cite{wilkinson1965}[pg. 66-71]. A small perturbation is applied to  $A\in {\mathbb R}_s^{n \times n}$ resulting in $A_\epsilon= A +\epsilon S_A$, with sufficiently small $\epsilon>0$ and $\| S_A\| =1$, using a perturbation analysis, Wilkinson  obtains
\begin{align}
| \lambda_i (A) - \lambda_i (A_\epsilon)  | \le \epsilon C,
\end{align}
with $C$ independent of $\epsilon$ and that the difference between the corresponding eigenvectors $q_i(A)$  and $q_i(A_\epsilon)$ can be estimated as
\begin{align}
|\Delta q_i(A) |:= | q_i(A) -q_i(A_\epsilon) |  \le & C\epsilon \left | \sum_{j=1, j \ne i}^n \frac{ q_j(A) ^TS q_i(A)}{ (\lambda_i(A) -\lambda_j(A)) q_j (A)^T q_j(A)} q_j(A)  \right | ,\nonumber \\
\le & \epsilon 
C \sum_{j=1, j \ne i}^n \frac{| q_j(A) ^TS_A q_i(A)|}{ |\lambda_i(A) -\lambda_j(A)|} |q_j(A)| . \label{eqn:erroreigvect}
\end{align}
Since the computation of eigenvalues and eigenvectors is obtained  using finite precision arithmetic, these results show that the computed eigenvalues are relatively well behaved, but the accuracy of the computed eigenvectors  deteriorates like $1/\min |\lambda_i(A)-\lambda_j(A)|$  if the eigenvalues are close together.

Indeed, this motivates  the use of the approximate  $ d_{E,\theta}(A,B)$ and  $ d_{C,\theta}(A,B)$, rather than $d_R(Q_A,Q_B)$, $d_F(Q_A,Q_B)$,  as the former can be computed without knowledge of the eigenvectors. As further motivation, we  estimate $|\cos \theta -\cos \theta_\epsilon|$ in terms of the metrics $d_R(Q_A,Q_B)$ and $d_F(Q_A,Q_B)$, which again illustrates that their accuracy deteriorates as the eigenvalues become close.

\begin{lemma} \label{lemma:cosdrdfpert}
Considering the perturbed symmetric matrices $A_\epsilon= A +\epsilon S_A$ and $B_\epsilon= B +\epsilon S_B$  with sufficiently small $\epsilon>0$ and $\| S_A\| =\| S_B\|=1$, then
\begin{subequations}
\begin{align}
|\cos \theta_\epsilon - \cos \theta|=&
|\cos d_R(Q_{A_\epsilon},Q_{B_\epsilon}) - \cos d_R(Q_A,Q_B)| =\nonumber \\
& \le C\epsilon \left ( \sum_{i,j=1, i \ne j}^n \frac{| q_j (A)^T S_A q_i(A)|}{|\lambda_i(A) - \lambda_j(A)|} |q_j(A)| | q_i(B)| + 
\frac{| q_j (B)^T S_B q_i(B)|}{|\lambda_i(B) - \lambda_j(B)|} |q_j(B) | |  q_i(A)|  
\right ) \label{eqn:estimatedr}, \\
|\cos \theta_\epsilon - \cos \theta| = &
\frac{1}{4} |d_F(Q_{A_\epsilon},Q_{B_\epsilon})^2 -  d_F(Q_A,Q_B)^2| \nonumber \\
& \le C\epsilon \left ( \sum_{i,j=1, i \ne j}^n \frac{| q_j (A)^T S_A q_i(A)|}{|\lambda_i(A) - \lambda_j(A)|} |q_j(A)| | q_i(B)| + 
\frac{| q_j (B)^T S_B q_i(B)|}{|\lambda_i(B) - \lambda_j(B)|} |q_j(B)| | q_i(A)|  
\right ) \label{eqn:estimatedF}.
 \end{align}
 \end{subequations}
\end{lemma}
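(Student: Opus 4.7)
The plan is to reduce both estimates to a single bound on $|\cos\theta_\epsilon - \cos\theta|$ and then to invoke Wilkinson's first-order perturbation formula for eigenvectors already recalled in (\ref{eqn:erroreigvect}). Since $d_R(Q_A,Q_B)=|\theta|$ and cosine is even, we have $\cos d_R(Q_{A_\epsilon},Q_{B_\epsilon})-\cos d_R(Q_A,Q_B) = \cos\theta_\epsilon - \cos\theta$, so (\ref{eqn:estimatedr}) is simply an estimate on $|\cos\theta_\epsilon - \cos\theta|$. For (\ref{eqn:estimatedF}), the identity $d_F(Q_A,Q_B)^2 = 4(1-\cos\theta)$ derived in Section~\ref{sect:commonmetrics} gives $\tfrac{1}{4}(d_F(Q_{A_\epsilon},Q_{B_\epsilon})^2 - d_F(Q_A,Q_B)^2) = \cos\theta - \cos\theta_\epsilon$, whose absolute value is the same quantity. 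Thus (\ref{eqn:estimatedr}) and (\ref{eqn:estimatedF}) carry an identical right-hand side, and it suffices to bound $|\cos\theta_\epsilon - \cos\theta|$.

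Next, I would use the Rodrigues formula (\ref{eqn:rodrigues}) with $n=3$, which yields $\tr R = 1 + 2\cos\theta$ since $\tr I = 3$, $\tr K = 0$ and $\tr K^2 = -2$. Setting $R_\epsilon = Q_{A_\epsilon} Q_{B_\epsilon}^T$ and writing $Q_{A_\epsilon} = Q_A + \Delta Q_A$, $Q_{B_\epsilon} = Q_B + \Delta Q_B$ with columns $\Delta q_i(A)$ and $\Delta q_i(B)$ respectively, I would expand
\begin{align*}
\cos\theta_\epsilon - \cos\theta = \tfrac{1}{2}(\tr R_\epsilon - \tr R) = \tfrac{1}{2}\sum_{i=1}^{n}\bigl(\Delta q_i(A)^T q_i(B) + q_i(A)^T \Delta q_i(B)\bigr) + \tfrac{1}{2}\tr(\Delta Q_A \Delta Q_B^T),
\end{align*}
using that $\tr(Q_A Q_B^T) = \sum_i q_i(A)^T q_i(B)$. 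The last term is $O(\epsilon^2)$ for sufficiently small $\epsilon$, since each $\Delta q_i$ is $O(\epsilon)$ by (\ref{eqn:erroreigvect}), and is therefore absorbed into the implicit $O(\epsilon^2)$ correction.

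Finally, I would substitute Wilkinson's first-order bound (\ref{eqn:erroreigvect}) for $\Delta q_i(A)$ and its analogue for $\Delta q_i(B)$, then apply the triangle inequality and Cauchy--Schwarz (bounding $|q_j(A)^T q_i(B)| \le |q_j(A)||q_i(B)|$) to obtain, for each $i$,
\begin{align*}
|\Delta q_i(A)^T q_i(B)| \le C\epsilon\sum_{j=1,\, j\ne i}^{n} \frac{|q_j(A)^T S_A q_i(A)|}{|\lambda_i(A) - \lambda_j(A)|}|q_j(A)||q_i(B)|,
\end{align*}
and symmetrically for the $B$-contribution. Summing over $i$ and absorbing the factor $\tfrac{1}{2}$ together with the dimension-dependent constants into $C$ produces the right-hand sides of both (\ref{eqn:estimatedr}) and (\ref{eqn:estimatedF}).

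The main obstacle is essentially conceptual rather than technical: the Wilkinson estimate (\ref{eqn:erroreigvect}) degenerates like $1/\min_{i\ne j}|\lambda_i - \lambda_j|$, so the resulting bound only reflects meaningful first-order behaviour while $\epsilon$ is small compared with the minimum eigenvalue gap of $A$ and of $B$. Outside this regime the quadratic remainder $\tfrac{1}{2}\tr(\Delta Q_A \Delta Q_B^T)$ is no longer dominated by the linear terms, and a sharper global bound would require a different argument; but this is precisely the sensitivity phenomenon Lemma~\ref{lemma:cosdrdfpert} is designed to expose, motivating the eigenvector-free alternatives $d_{E,\theta}$ and $d_{C,\theta}$.
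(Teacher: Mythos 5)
Your proposal is correct and follows essentially the same route as the paper's proof: both reduce each claim to a bound on $|\cos\theta_\epsilon-\cos\theta|$ via $\cos\theta=(\tr R-1)/2$ and $d_F(Q_A,Q_B)^2=4(1-\cos\theta)$, expand $\tr R_\epsilon-\tr R$ to first order in the eigenvector perturbations $\Delta q_i$, and then substitute Wilkinson's estimate (\ref{eqn:erroreigvect}). Your write-up is a touch more explicit than the paper's in flagging the Cauchy--Schwarz step $|q_j(A)^T q_i(B)|\le|q_j(A)||q_i(B)|$ and in isolating the $O(\epsilon^2)$ cross term $\tr(\Delta Q_A\Delta Q_B^T)$, but the underlying argument is identical.
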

\begin{proof}
To prove (\ref{eqn:estimatedr}) then we introduce the matrix $R_\epsilon$ with enteries
\begin{align}
(R_\epsilon)_{ij} :=  (q_i(A) + \Delta q_i (A))( q_j(B) + \Delta q_j(B)) ,
\end{align}
so that 
\begin{align}
\tr(R_\epsilon) = & \sum_{i=1}^3  (q_i(A) + \Delta q_i (A))^T( q_i(B) + \Delta q_i(B)) \nonumber \\
& = \tr(R) + \tr(\Delta R) + O(\epsilon^2) \label{eqn:expandRepsilon},
\end{align}
where
\begin{align}
\tr(R) &  = \sum_{i=1}^3  q_i(A) ^T q_i(B) , \nonumber \\
\tr(\Delta R) &  = \sum_{i=1}^3  \Delta q_i(A) ^T q_i(B)  + q_i(A) ^T \Delta q_i(B)+  \Delta q_i(A) ^T \Delta q_i(B) . \nonumber 
\end{align}
From (\ref{eqn:rodrigues}) we know that $\cos \theta = (\tr(R) -1)/2$.  
Since $|\cos d_R(Q_{A_\epsilon},Q_{B_\epsilon}) - \cos d_R(Q_A,Q_B)| =|\cos \theta_\epsilon - \cos \theta|= | \tr(R_\epsilon) - \tr(R) |/2 $,   by considering (\ref{eqn:expandRepsilon}) and  applying (\ref{eqn:erroreigvect}) to  $A_\epsilon= A +\epsilon S_A$ and $B_\epsilon= B +\epsilon S_B$  
gives the desired result.

To prove (\ref{eqn:estimatedF}) we recall that $d_F(Q_A,Q_B)^2 = 6 - \tr(R)= 4(1 - \cos \theta)$ and so $d_F(Q_{A_\epsilon},Q_{B_\epsilon})^2 = 6 - \tr(R_\epsilon)= 4(1 - \cos \theta_\epsilon)$. Thus,  $4|\cos \theta_\epsilon - \cos \theta| =  |d_F(Q_{A_\epsilon},Q_{B_\epsilon})^2 -  d_F(Q_A,Q_B)^2|$. Then, considering (\ref{eqn:expandRepsilon}) and  applying (\ref{eqn:erroreigvect}) to  $A_\epsilon= A +\epsilon S_A$ and $B_\epsilon= B +\epsilon S_B$ 
gives the desired result.
\end{proof}

Next, we consider the {approximate measures of distance}   $d_{E,\theta}(A,B)$ and  $d_{C,\theta}(A,B)$ under perturbed matrices. The following result follows immediately from the definitions of $d_E^\pm(A,B)$ and $d_C(A,B)$, which are well behaved for close eigenvalues.
\begin{lemma} \label{lemma:cosdedcfpert}
Considering the symmetric perturbed matrices $A_\epsilon= A +\epsilon S_A$ and $B_\epsilon= B +\epsilon S_B$  with sufficiently small $\epsilon>0$, $\| S_A\| =\| S_B\|=1$, then, for small $\epsilon$,
\begin{align}
|d_E^{\pm}(A,B) - d_E^{\pm}(A_\epsilon,B_\epsilon) | \le & C \epsilon, \nonumber\\
|d_C (A,B) - d_C (A_\epsilon,B_\epsilon) | \le & C \epsilon \nonumber.
\end{align}
\end{lemma}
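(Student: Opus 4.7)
The plan is to estimate each term appearing in the definitions of $d_E^{\pm}(A,B)$ and $d_C(A,B)$ separately, exploiting the key structural feature that neither semi-metric involves eigenvectors. Consequently, only the portion of Wilkinson's analysis preceding (\ref{eqn:erroreigvect}), namely the bound $|\lambda_i(A_\epsilon) - \lambda_i(A)| \le C\epsilon$ on the eigenvalues themselves, will be needed. This bound carries no $1/|\lambda_i - \lambda_j|$ factor, which is precisely why we expect an estimate of the form $C\epsilon$ with $C$ independent of eigenvalue spacing, in sharp contrast with Lemma~\ref{lemma:cosdrdfpert}.

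For $d_E^{\pm}$, I would decompose the argument inside $\max_\sigma/\min_\sigma$ into the two pieces $\|A-B\|^2$ and $\sum_i |\lambda_i(A) - \lambda_{\sigma(i)}(B)|^2$. Expanding
\begin{align*}
\|A_\epsilon - B_\epsilon\|^2 = \|A-B\|^2 + 2\epsilon \langle A-B, S_A - S_B\rangle + \epsilon^2 \|S_A - S_B\|^2,
\end{align*}
and using $\|A\|, \|B\| \le C$ (which follows from $|(A)_{ij}| < 1$, $|(B)_{ij}| < 1$) together with $\|S_A\| = \|S_B\| = 1$, gives $|\|A_\epsilon - B_\epsilon\|^2 - \|A - B\|^2| \le C\epsilon$. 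For the eigenvalue sum, Wilkinson's bound applied separately to $A$ and $B$, combined with boundedness of the eigenvalues, yields $\bigl| |\lambda_i(A_\epsilon) - \lambda_{\sigma(i)}(B_\epsilon)|^2 - |\lambda_i(A) - \lambda_{\sigma(i)}(B)|^2 \bigr| \le C\epsilon$ uniformly over $\sigma \in S_n$. Since $\max$ and $\min$ over the finite set $S_n$ are $1$-Lipschitz in the sup-norm on their arguments, adding the two estimates gives the stated bound on $d_E^{\pm}$.

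For $d_C(A,B) = \|[A,B]\|$, I would write the commutator of the perturbed matrices as
\begin{align*}
[A_\epsilon, B_\epsilon] = [A,B] + \epsilon\bigl([A, S_B] + [S_A, B]\bigr) + \epsilon^2 [S_A, S_B],
\end{align*}
then apply the reverse triangle inequality $\bigl| \|X + Y\| - \|X\| \bigr| \le \|Y\|$ with $X = [A,B]$, and bound each first-order commutator by submultiplicativity of the Frobenius norm, for example $\|[A,S_B]\| \le 2\|A\|\|S_B\|$. Absorbing the $O(\epsilon^2)$ remainder into $C\epsilon$ for sufficiently small $\epsilon$ completes the argument.

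Neither step presents a genuine obstacle; the work is essentially bookkeeping, and indeed the brevity is the whole point. The observation worth highlighting in concluding the proof is the contrast with Lemma~\ref{lemma:cosdrdfpert}: here the constant $C$ contains no factor $1/\min_{i \ne j}|\lambda_i - \lambda_j|$, which is precisely the reason the approximate distance measures $d_{E,\theta}$ and $d_{C,\theta}$ are to be preferred in practice when eigenvalues of $A$ or $B$ cluster.
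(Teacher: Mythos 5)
Your argument is correct, and it is essentially the same as the paper's — the paper itself gives no explicit proof, stating only that the lemma ``follows immediately from the definitions of $d_E^\pm(A,B)$ and $d_C(A,B)$, which are well behaved for close eigenvalues,'' which is precisely the point your bookkeeping (quadratic expansion of $\|A_\epsilon-B_\epsilon\|^2$, eigenvalue continuity without eigenvector sensitivity, Lipschitz dependence of $\max/\min$ over the finite set $S_n$, bilinear expansion of the commutator plus the reverse triangle inequality) makes rigorous. The closing remark on the absence of a $1/\min_{i\ne j}|\lambda_i-\lambda_j|$ factor, in contrast to Lemma~\ref{lemma:cosdrdfpert}, is exactly the observation the authors intend the reader to draw.
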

Insights in to the behaviour of $ d_{E,\theta}(A_\epsilon,B_\epsilon)$ and  $ d_{C,\theta}(A_\epsilon,B_\epsilon)$ can be obtained by considering the behaviour of $M(A_\epsilon,B_\epsilon)$ and $N(A_\epsilon,B_\epsilon)$ in the normalising constants $k^TM(A_\epsilon,B_\epsilon)k$ and $k^TN(A_\epsilon,B_\epsilon)k$. Specifically,
   \begin{align*}
(M(A_\epsilon,B_\epsilon))_{11} = &
- (\lambda_{2}(A) - \lambda_{3}(A)+O(\epsilon)) (\lambda_{2}(B) - \lambda_{3}(B) +O(\epsilon)) \nonumber\\
= & - (\lambda_{2}(A) - \lambda_{3}(A)) (\lambda_{2}(B) - \lambda_{3}(B) ) +O(\epsilon),  \\
(M(A_\epsilon,B_\epsilon))_{22} = & - (\lambda_{1}(A) - \lambda_{3}(A)) (\lambda_{1}(B) - \lambda_{3}(B) )+O(\epsilon) , \\
(M(A_\epsilon,B_\epsilon))_{33} = & - (\lambda_{1}(A) - \lambda_{2}(A) ) (\lambda_{1}(B) - \lambda_{2}(B))+O(\epsilon) ,
\end{align*}
{as $\epsilon \to 0$ and similarly}
\begin{align*}
 (N(A_\epsilon,B_\epsilon))_{11} = & 2 (\lambda_2(A)-\lambda_3(A) )^2 ( \lambda_2(B) - \lambda_3(B))^2+O(\epsilon),\\
 (N(A_\epsilon,B_\epsilon))_{22} = & 2 (\lambda_1(A)-\lambda_3(A))^2 ( \lambda_1(B) - \lambda_3(B))^2 +O(\epsilon), \\
(N(A_\epsilon,B_\epsilon))_{33} = & 2 (\lambda_1(A)-\lambda_2(A))^2 ( \lambda_1(B) - \lambda_2(B))^2 +O(\epsilon),
\end{align*}
{as $\epsilon \to 0$.}
{This means that as the eigenvalues become close and say $(\lambda_2(A)- \lambda_3(A) )(  \lambda_2(B)- \lambda_3(B)) \to 0 $, the diagonal coefficients of $M$ and $N$ are always non-zero and depend on $\epsilon$, which   provides a form of regularisation when using the approximate measures.}

\begin{remark}
In a similar way to the behaviour of $|\Delta q_i(A) |:= | q_i(A) -q_i(A_\epsilon) |$, estimated in (\ref{eqn:erroreigvect}), Lemma~\ref{lemma:cosdrdfpert} indicates that the cosines of the metrics (and hence the metrics) $d_R(A_\epsilon,B_\epsilon)$, $d_F(A_\epsilon,B_\epsilon)$ are subject to potentially large errors if the eigenvalues of $A$ or $B$ become close. On the other hand, the behaviour of $ d_{E,\theta} (A_\epsilon,B_\epsilon)$ and  $d_{C,\theta}(A_\epsilon ,B_\epsilon )$ will be regularised when the eigenvalues become close.
\end{remark}

{Lemmas~\ref{lemma:cosdrdfpert}  and~\ref{lemma:cosdedcfpert}, as well as above discussion, will be directly applicable in the following sections as we will obtain the eigenvalue decomposition of  $A$ and $B$ numerically and then compute $ d_R(A_\epsilon,B_\epsilon)$, $d_F (A_\epsilon,B_\epsilon)$ and  $d_{E,\theta}(A_\epsilon,B_\epsilon)$ and  $ d_{C,\theta} (A_\epsilon ,B_\epsilon )$.  We will then also consider the situation where the coefficients of $A$ and $B$ arise from measurements (and are subject to unavoidable measurement errors) or from numerical simulations (and are subject to round-off error and discretisation errors). }

{
\section{Illustrative examples for symmetric matrices} \label{sect:basicexamples}
}
\begin{figure}
\begin{center}
$\begin{array}{cc}
\includegraphics[width=0.45\textwidth]{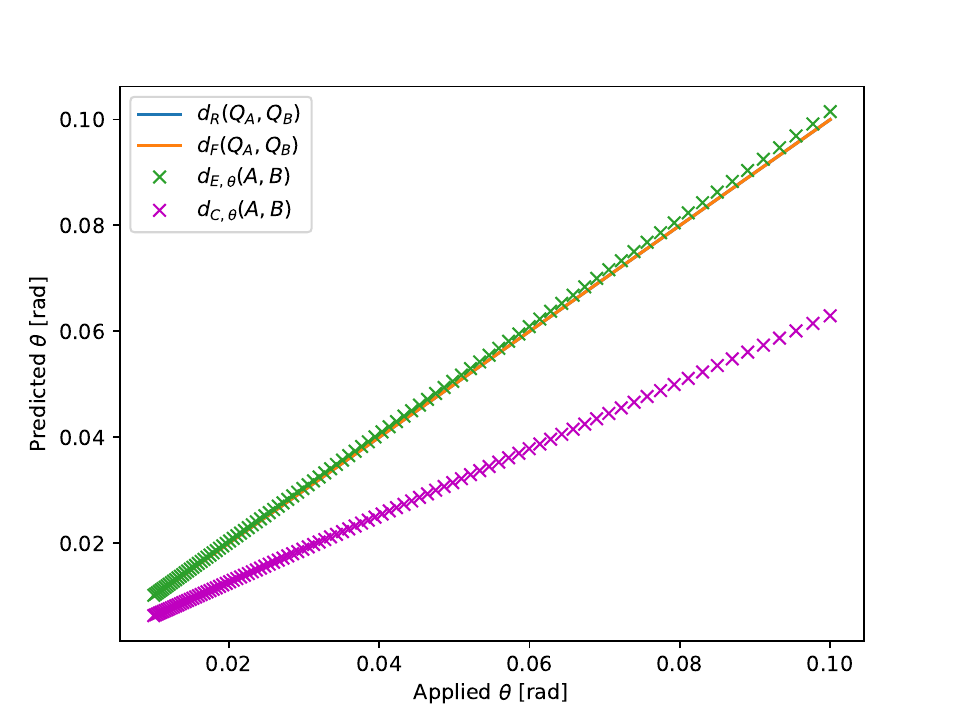} &
\includegraphics[width=0.45\textwidth]{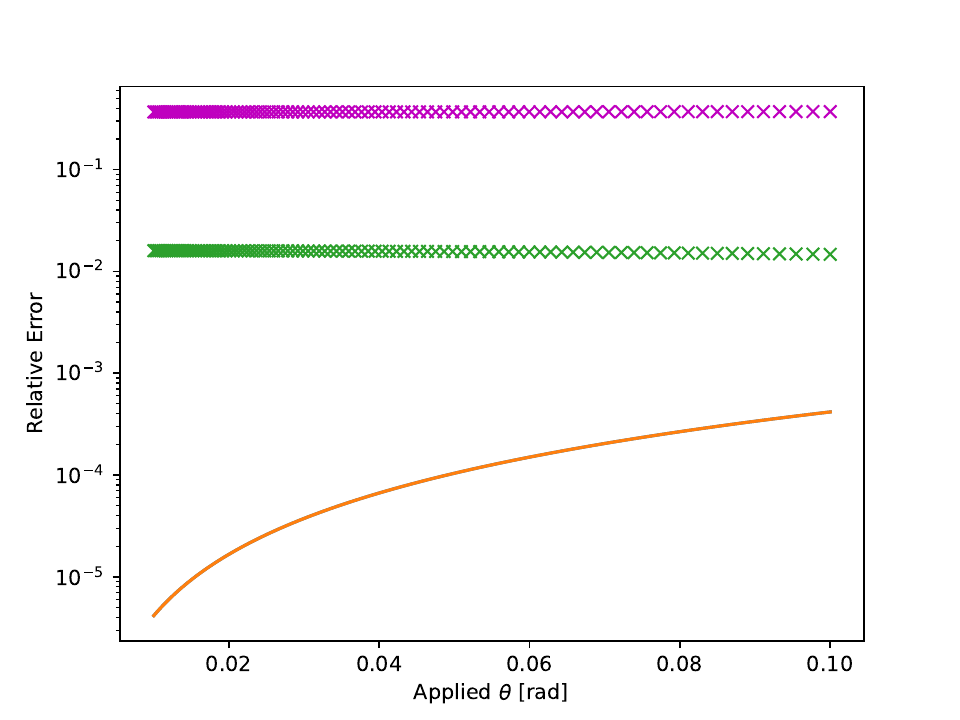} \\
(a) & (b) \\
\end{array}$
\end{center}
\caption{Two symmetric matrices $A$ and $B$ with distinct eigenvalues showing $(a)$ applied and predicted angles and $(b)$ relative error of angle prediction.}
\label{fig:twosymmatrixangles}
\end{figure}

{In this section, we provide examples to illustare the need for alternatives to the classical $d_R(Q_A,Q_B)$ and $d_F(Q_A,Q_B)/\sqrt{2}$ metrics for symmetric matrices $A$ and $B$. In the first example, we illustrate a situation where the existing metrics work well. The matrix $A$ is chosen as 
\begin{align}
A= \left ( \begin{array}{ccc}
3 & 1 & 0.5\\
1 & 4 & 0.25\\
0.5 & 0.25 & 5
\end{array} \right ),
\nonumber
\end{align}
which has eigenvalues and eigenvectors
\begin{align}
\Lambda_A= \left( \begin{array}{ccc}
2.35 & 0 & 0\\
0 &  4.31 & 0\\
0 & 0 &  5.34\end{array} \right ), \qquad
 Q_A= \left ( \begin{array}{rrr}
 0.86&  0.37 &  0.36\\
  -0.50&  0.76&  0.42\\
   -0.11& -0.54&  0.83
 \end{array} \right ),
 \nonumber 
\end{align} 
to 2dp when obtained numerically~\footnote{Unless otherwise mentioned, we use \texttt{numpy.linalg.eigh}  for obtaining the eigenvalue decomposition, which uses the LAPACK routines  \texttt{\_syevd} and \texttt{\_heevd}} and
 $A$ can be observed to have well separated eigenvalues. A rotational axis of $k =(1/\sqrt{3}) (1,1,1)$ is specified,  a  small angle $\theta$ is considered  
and a rotation matrix $R= \exp ( \theta K)$ is generated. Choosing $\Lambda_B = \Lambda_A$ and computing $Q_B =Q_A R^T$ allows $B=Q_B \Lambda_B Q_B^T$ to be found. This process is repeated for $0 < \theta \le 0.1$ rad and the angles predicted by $d_R(Q_A,Q_B)$, $d_F(Q_A,Q_B)/\sqrt{2} $, $d_{E,\theta} (A,B)$ and $d_{C,\theta}(A,B)$ are compared to those prescribed. Note that when computing $d_R(Q_A,Q_B)$ and $d_F(Q_A,Q_B)/\sqrt{2} $ these rely on $Q_A$ and $Q_B$ and, from the definitions of the metrics, we are interested in the {\em smallest distance measure}. 
This means that, as described in Section~\ref{sect:metrics},  among the possible candidates for choosing the ordering of $\Lambda_A$ and $\Lambda_B$, hence, the ordering of the columns $Q_A$ and $Q_B$, we consider the permutation leading to the smallest measure. Additionally, we account for possible sign flips of the directions of the eigenvectors, which still lead to $\text{det}\, Q_A = \text{det}\,Q_B =1$. 
When computing $d_{E,\theta} (A,B)$ and $d_{C,\theta}(A,B)$ we present the results corresponding the lower bounds (LB) provided by the approximation of the constants in Lemmas~\ref{lemma:dRconnectdE} and \ref{lemma:dRconnectdC}.
The results are shown in Figure~\ref{fig:twosymmatrixangles}. As the eigenvalues of $A$ and $B$ are well-separated, both $d_R(Q_A,Q_B)$ and $d_F(Q_A,Q_B)/\sqrt{2} $  provide very accurate predictions of the specified angle and, on the chosen scale, the curves are in-distinguishable. The accuracy of $d_{E,\theta}(A,B)$ is much better than  $d_{C,\theta}(A,B)$ and is of the relative error of the former is approximately $1.5\%$ independent of the angles considered.
}

{Next, to illustrate a situation where $d_R(Q_A,Q_B)$ and $d_F(Q_A,Q_B)$ is problematic, we consider  a matrix $A$ of the form
\begin{align}
A= \left ( \begin{array}{ccc}
1+\epsilon & 0.01 & 0.01\\
0.01 & 1-\epsilon & 0.01\\
0.01 & 0.01 & 2
\end{array} \right ),
\nonumber
\end{align}
which has two close, but distinct, eigenvalues for small $\epsilon$~\cite{vanloon}[pg. 400].  As noted in Section~\ref{sect:sensmetrics}, traditional computational approaches (eg as used by \texttt{numpy.linalg.eigh}) for the numerical computation of $Q_A$, and in particular those columns corresponding to the two closely spaced eigenvalues, are problematic. We choose $B= \text{diag} (1,2,3)$ so that  $\Lambda_B$ contains well separated eigenvalues and, hence, there are no issues in computing the eigenvectors contained in  $Q_B$ using standard approaches. We then make predictions of $\theta = \|\log(Q_A Q_B)\| /\sqrt{2}$ using $d_R(Q_A,Q_B)$, $d_{E,\theta}(A,B)$ 
and $d_{C,\theta}(A,B)$ 
for  $0.001\le \epsilon\le 0.1$ in Figure~\ref{fig:twosymmatrixanglespert}. The results $d_F(Q_A,Q_B)/\sqrt{2}$ are similar to $d_R(Q_A,Q_B)$ and, hence, not shown.
The results highlight the issues in using the metric $d_R(Q_A,Q_B)$  for this situation when  $\epsilon $ is small since, instead of predicting $\theta \approx 0$ (since upto a permutation of columns and possible sign flips $Q_B=I$ and $Q_A\approx I$), the metric is polluted by the inaccurate rotational data in $Q_A$. However, the approximate $d_{E,\theta}(A,B)$ and $d_{C,\theta} (A,B)$ perform much better as they do not rely on eigenvector information and produces a regularised solution for small $\epsilon$. Furthermore, the figure also includes the results obtained using  an explicit solution for $\Lambda_A$~\cite{smitheigen} and a solution for $Q_A$, which is  obtained using cross products~\cite{kopp,3x3techreport} and is summarised in Appendix~\ref{sect:appendixexplicteig}. But, this approach requires conditional statements that depend on the proximity of the eigenvalues to each other  (eg $| \lambda_i - \lambda_j| / | \lambda_i | < \text{tol } $) and can still result in erroneous results if closely spaced eigenvalues are not detected and/or if the approach for obtaining eigenvectors for closely spaced eigenvalues is applied when the eigenvalues are well separated. The effect of using the cross product approach to compute $Q_A$ and,  hence, $d_R(Q_A,Q_B)$, for different proximity conditions are shown. Using the explicit solution for computing $Q_A$ together with $d_R(Q_A,Q_B)$ does recover the expected behaviour, but this comes with the challenge of choosing an appropriate tolerance, which is overcome by instead using $d_{E,\theta}(A,B)$ or $d_{C,\theta}(A,B)$.
}

\begin{figure}
\begin{center}
\includegraphics[width=0.45\textwidth]{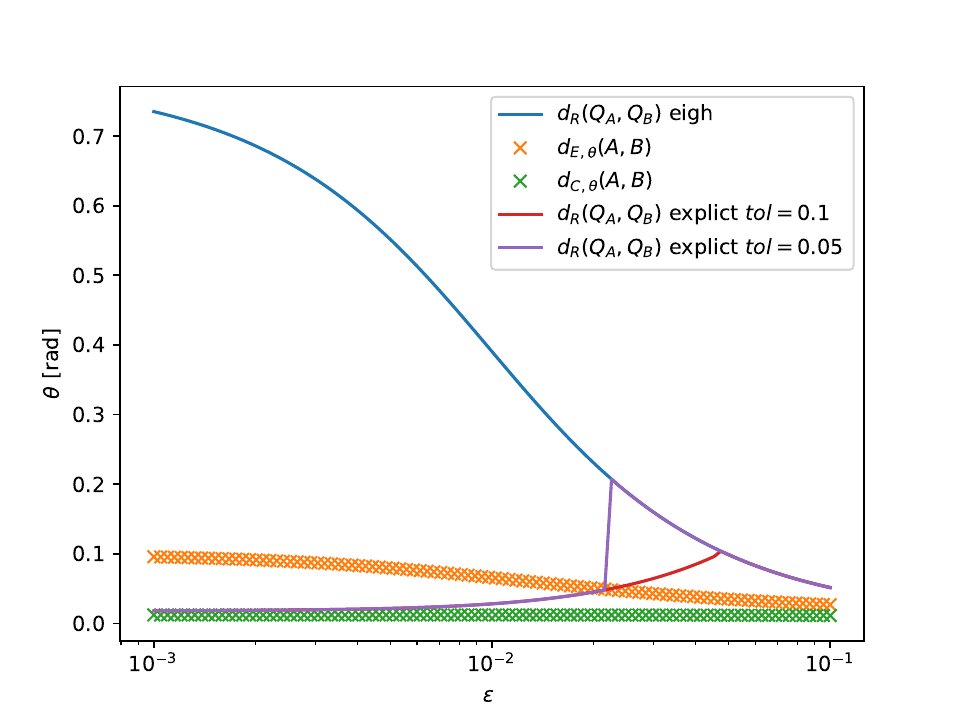} 
\end{center}
\caption{Two symmetric matrices $A$ and $B$ where $A$ has two closely spaced eigenvalues and $B$ has distinct eigenvalues showing the predicted angles as a function of $\epsilon$.}
\label{fig:twosymmatrixanglespert}
\end{figure}

\section{Magnetic Polarizability Tensor (MPT) and errors due to regularisation and discretisation}\label{sect:mpt}

{The purpose of this, and subsequent sections, is to explore how our new semi-metrics, and their associated approximate measures of distance, can aid with the identification and classification of metal objects in metal detection. We begin by briefly introducing 
the rank two complex symmetric magnetic polarizability tensor (MPT), which offers an economical characterisation of conducting magnetic objects. For a hidden object, its coefficients  can be obtained from measurements of the induced voltage. In addition, the MPT coefficients can also be calculated numerically  for a library of objects that one may encounter.  In particular, for an object $B_\alpha = \alpha B +{\vec z}$, where $\alpha \ll1 $ denotes the object's size, $B$ a unit sized object placed at the origin and ${\vec z}$ a translation, 
the complex symmetric MPT 
${\mathcal M} =({\mathcal M})_{ij} {\vec e}_i \otimes {\vec e}_j$, where Einstein index summation convention is implied,
has six  independent coefficients $({\mathcal M})_{ij} $ that depend on $\alpha$, $B$,  the object's magnetic permeability contrast $\mu_r$, its conductivity $\sigma_*$  and also depends on the angular frequency $\omega$ of the exciting magnetic field~\cite{LedgerLionheart2020spect}, but are independent of ${\vec z}$. 
The MPT coefficients, follow from  the additive decomposition~\cite{LedgerLionheart2020spect}
$(\mathcal{M})_{ij}:=(\tilde{\mathcal{R}})_{ij}+\im(\mathcal{I})_{ij}=(\mathcal{N}^0)_{ij}+(\mathcal{R})_{ij}+\im(  \mathcal{I})_{ij}$ where 
\begin{subequations}
\label{eqn:NRI}
\begin{align}
(\mathcal{N}^0( \alpha B,\mu_r) )_{ij}&:=\alpha^3\delta_{ij}\int_{B}(1-\tilde{\mu}_r^{-1})\dif \bm{\xi}+\frac{\alpha^3}{4}\int_{B\cup B^c}\tilde{\mu}_r^{-1}\nabla\times\tilde{\bm{\theta}}_i^{(0)}\cdot\nabla\times\tilde{\bm{\theta}}_j^{(0)}\dif \bm{\xi},\\
(\mathcal{R}(\alpha B, \omega,\sigma_*,\mu_r))_{ij}&:=-\frac{\alpha^3}{4}\int_{B\cup B^c}\tilde{\mu}_r^{-1}\nabla\times\overline{\bm{\theta}_i^{(1)}}\cdot\nabla\times{\bm{\theta}_j^{(1)}}\dif \bm{\xi}, \label{eqn:Rtensor}\\
(\mathcal{I}(\alpha B, \omega,\sigma_*,\mu_r))_{ij}&:=\frac{\alpha^3}{4}\int_B\nu\Big(\overline{\bm{\theta}_i^{(1)}+\tilde{\bm{\theta}}_i^{(0)}+\bm{e}_i\times\bm{\xi}}\Big)\cdot\Big({\bm{\theta}_j^{(1)}+\tilde{\bm{\theta}}_j^{(0)}+\bm{e}_j\times\bm{\xi}}\Big)\dif \bm{\xi}, \label{eqn:Itensor}
\end{align}
\end{subequations}
are each the coefficients of real symmetric rank two tensors and $\delta_{ij}$ is the Kronecker delta. In the above,  the overbar denotes the complex conjugate,  $\tilde{\mu}_r ({\vec \xi}) =\mu_r$ in $B$ and $\tilde{\mu}_r({\vec \xi}) =1$ otherwise, where ${\vec \xi}$ is measured from the origin, which lies inside $B$. We note that an object's MPT coefficients as a function of $\omega$ is called its MPT spectral signature.}

{ To compute (\ref{eqn:NRI}), we need to approximate the  solution of the transmission problem for ${\bm \theta}_i^{(1)}\in {\mathbb C}^3$ 
\begin{subequations}\label{eqn:theta1trans}
\begin{align}
\nabla \times \mu_r^{-1} \nabla \times {\vec \theta}_i^{(1)} -\im \nu {\vec \theta}_i^{(1)} &= \im \nu {\vec \theta}_i^{(0)} && \text{in $B$}, \\
\nabla \times\nabla \times {\vec \theta}_i^{(1)}  &=  {\vec 0} && \text{in $B^c:= {\mathbb R}^3 \setminus \overline{B}$} ,\\
\nabla \cdot {\vec \theta}_i^{(1)} & = 0 && \text{in $B^c$}, \\
[{\vec n} \times {\vec \theta}_i^{(1)}]_\Gamma ={\vec 0}, \ [{\vec n} \times \tilde{\mu}_r^{-1} \nabla \times {\vec \theta}_i^{(1)}]_\Gamma &  ={\vec 0} && \text{on $\Gamma:=\partial B$},  \\
{\vec \theta}_i^{(1)} & = O ( |{\vec \xi}|^{-1} ) && \text{as $| {\vec \xi} | \to \infty$},
\end{align}
\end{subequations}
where $\nu:= \alpha^2 \sigma_* \mu_0 \omega$ ,  $[\cdot ]_\Gamma$ denotes the jump over $\Gamma$, ${\vec n}$ is the unit outward normal.
Furthermore, we also need to obtain $ {\vec \theta}_i^{(0)}\in {\mathbb R}^3$, which is obtained by the solution of a related transmission problem with $\nu=0$ in $B$ and $\tilde{\vec \theta}_i^{(0)} := {\vec \theta}_i^{(0)} - {\vec e}_i \times {\vec \xi} \in {\mathbb R}^3$. }
{The numerical solution of (\ref{eqn:theta1trans}), which is discussed in Section~\ref{sect:numericalcomp}, becomes increasingly challenging as  $\omega$ becomes large (up to the limit of the eddy current model) due to large $\mu_r$ and high $\sigma_*$ and leads to a rapid decay of  eddy currents just inside the conductor. In the physical object $B_\alpha$ these are characterised by  the skin depth
\begin{align}
\delta := \sqrt{2/(\omega \sigma_* \mu_0 \mu_r)},
\end{align}
 which measures the depth to which surface eddy currents decay to $1/e$ of their surface value, and, in the context of the non-dimensional object $B$ used in the computations, 
\begin{align}
\tau :=\sqrt{2/(\mu_r\nu)} = \delta/\alpha , 
\end{align}
 which is a corresponding non-dimensional skin depth for the eddy currents ${\vec J}_i^e := \im \omega \sigma_* {\vec \theta}_i^{(1)}$~\cite{Elgy2024_preprint}.}

{Our previous studies have used invariants of the MPT spectral signature for object classification~\cite{ledgerwilsonamadlion2021,ledgerwilsonlion2022}. In this work, we are particularly interested in  the extent to which $\mathcal{N}^0, \mathcal{R}$ and $ \mathcal{I}$  commute as a function of $\omega$ (and hence $\nu$) and whether this can be provide useful additional information for object classification. We arrange the coefficients of the symmetric rank-2 tensors as real symmetric $3 \times 3 $ matrices and use $\mathcal{N}^0\in{\mathbb R}_s^{3 \times 3}, \mathcal{R}\in{\mathbb R}_s^{3 \times 3}$ and $ \mathcal{I}\in{\mathbb R}_s^{3 \times 3}$ to also used to denote the corresponding matrices.  We introduce the parameter-dependent commutators}
\begin{subequations} \label{eqn:randispectral}
\begin{align}
{\mathcal Z}(\nu) := & [ {\mathcal R} (\nu), {\mathcal I} (\nu) ], \nonumber \\
{\mathcal Z}^{(0)} (\nu) := & [ {\mathcal N} ^{(0)}, {\mathcal I} (\nu) ], \nonumber \\
\tilde{\mathcal Z}(\nu) := & [ \tilde{\mathcal R} (\nu), {\mathcal I} (\nu) ]  = 
 [ {\mathcal R} (\nu), {\mathcal I} (\nu)  ]+ [ {\mathcal N} ^{(0)}, {\mathcal I} (\nu) ]
= {\mathcal Z}( \nu )  + {\mathcal Z}^{(0)} (\nu) ,
\end{align}
\end{subequations}
which are skew symmetric and vanish if the 
 {eigenspace of one are all the eigenspaces of the other.
 We note that if the rotational and reflectional symmetries of an object are such that  $ {\mathcal N}^0$, $ {\mathcal R}$ and $ {\mathcal I}$ have at most three independent coefficients each then the commutators vanish independent of $\nu$ and so we will focus on objects that do not have this property.}

{ We recall below Lemma 8.11 from~\cite{LedgerLionheart2020spect} that provides bounds on the growth of $|{\mathcal Z}(\nu)|$, which provides some useful insights in to its behaviour.}
\begin{lemma}[Lemma 8.11 from~\cite{LedgerLionheart2020spect}] 
\label{lemma:oldcomresultupdate}
The off-diagonal elements of
$[  {\mathcal R} (\nu) , {\mathcal I}(\nu) ]$, 
$ [ {\mathcal R} (\nu_1) ,  {\mathcal R} (\nu_2) ]$ and $[  {\mathcal I} (\nu_1),  {\mathcal I} (\nu_2) ]$
 for $0 < \nu_1,\nu_2,\nu< \infty$ and $\nu_1 \ne \nu_2$ can be estimated as follows
\begin{subequations}
\begin{align}
\left |  ({\mathcal Z})_{ij} \right | = \left | ([  {\mathcal R} (\nu) , {\mathcal I}(\nu)])_{ij}\right | &
   \le C \alpha^6 \nu ,  \label{eqn:proofcom1}\\
\left |
( [ {\mathcal R} (\nu_1),  {\mathcal R} (\nu_2)])_{ij}
 \right | & \le C \alpha^6  ,  \\
\left | ( 
[  {\mathcal I} (\nu_1),  {\mathcal I} (\nu_2) ])_{ij}
 \right | & \le C \alpha^6  \nu_1 \nu_2,  
\end{align}
\end{subequations}
where $ i \ne j$ , $C>0$ is independent of $\nu, \nu_1, \nu_2$ and $\alpha$.
\end{lemma}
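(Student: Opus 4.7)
The plan is to reduce the three commutator bounds to uniform pointwise bounds on the individual entries of $\mathcal{R}(\nu)$ and $\mathcal{I}(\nu)$, and then exploit the finite-dimensional algebraic identity $\bigl([A,B]\bigr)_{ij}=\sum_{k=1}^{3}\bigl(A_{ik}B_{kj}-B_{ik}A_{kj}\bigr)$ together with the triangle inequality. Since each off-diagonal commutator entry is a sum of at most six terms that are products of one factor from each matrix, the three stated bounds reduce to showing that $|({\mathcal R}(\nu))_{pq}|\le C\alpha^{3}$ uniformly in $\nu$, and $|({\mathcal I}(\nu))_{pq}|\le C\alpha^{3}\nu$ with constants independent of $\nu$.

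The first step is to apply Cauchy--Schwarz to the integral representation (\ref{eqn:Rtensor}) to obtain
\begin{equation*}
|({\mathcal R}(\nu))_{pq}|\le \tfrac{\alpha^{3}}{4}\,\bigl\|\tilde\mu_r^{-1/2}\nabla\times\bm{\theta}_p^{(1)}\bigr\|_{L^{2}(B\cup B^{c})}\bigl\|\tilde\mu_r^{-1/2}\nabla\times\bm{\theta}_q^{(1)}\bigr\|_{L^{2}(B\cup B^{c})},
\end{equation*}
and similarly, extracting the explicit $\nu$ from (\ref{eqn:Itensor}),
\begin{equation*}
|({\mathcal I}(\nu))_{pq}|\le \tfrac{\alpha^{3}\nu}{4}\,\bigl\|\bm{\theta}_p^{(1)}+\tilde{\bm{\theta}}_p^{(0)}+\bm{e}_p\times\bm{\xi}\bigr\|_{L^{2}(B)}\bigl\|\bm{\theta}_q^{(1)}+\tilde{\bm{\theta}}_q^{(0)}+\bm{e}_q\times\bm{\xi}\bigr\|_{L^{2}(B)}.
\end{equation*}
So the remaining task is to derive $\nu$-independent energy estimates for the transmission problem (\ref{eqn:theta1trans}). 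The standard route is to test the weak form of (\ref{eqn:theta1trans}) with $\overline{\bm\theta_p^{(1)}}$, integrate by parts, and separate real and imaginary parts: the imaginary part controls $\nu\|\bm\theta_p^{(1)}\|_{L^{2}(B)}^{2}$ in terms of $\nu\|\bm\theta_p^{(0)}\|_{L^{2}(B)}\|\bm\theta_p^{(1)}\|_{L^{2}(B)}$, after which the $\nu$'s cancel and one obtains $\|\bm\theta_p^{(1)}\|_{L^{2}(B)}\le C$ uniformly in $\nu$; the real part then gives $\|\tilde\mu_r^{-1/2}\nabla\times\bm\theta_p^{(1)}\|_{L^{2}(B\cup B^{c})}\le C$.

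Once these bounds are in place, combining them with the $L^{2}$-boundedness of $\tilde{\bm\theta}_p^{(0)}$ and $\bm e_p\times\bm\xi$ on the (bounded) domain $B$ gives $|({\mathcal R}(\nu))_{pq}|\le C\alpha^{3}$ and $|({\mathcal I}(\nu))_{pq}|\le C\alpha^{3}\nu$. Substituting into the six-term expansion of the commutator and using the triangle inequality then yields the three stated estimates: the cross-commutator picks up one factor of $\nu$ from $\mathcal{I}(\nu)$, $[\mathcal{R}(\nu_{1}),\mathcal{R}(\nu_{2})]$ is $\nu$-free, and $[\mathcal{I}(\nu_{1}),\mathcal{I}(\nu_{2})]$ picks up $\nu_{1}\nu_{2}$.

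The main obstacle is precisely the uniform-in-$\nu$ control of $\bm\theta_p^{(1)}$. Naively the right-hand side $\mathrm{i}\nu\bm\theta_p^{(0)}$ of (\ref{eqn:theta1trans}) grows linearly with $\nu$ and would seem to propagate into the estimates; the key observation that rescues the argument is that the absorption term $\mathrm{i}\nu\bm\theta_p^{(1)}$ on the left-hand side carries the same $\nu$, so after testing and taking the imaginary part the two $\nu$ factors cancel. Care is also required at the exterior decay condition as $|\bm\xi|\to\infty$ to justify the integration by parts on $B^{c}$; this is handled via standard truncation and the $O(|\bm\xi|^{-1})$ decay in (\ref{eqn:theta1trans}). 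Since this is precisely the argument of~\cite{LedgerLionheart2020spect}[Lemma 8.11], we merely need to verify that nothing changes under the present notation.
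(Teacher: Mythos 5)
The overall strategy you outline — reduce to uniform pointwise bounds on the entries of $\mathcal{R}(\nu)$ and $\mathcal{I}(\nu)$, then expand the commutator and apply the triangle inequality — is sound, and the reduction itself is correct. The problem is in the energy estimate you invoke for the $\mathcal{R}$ bound. Testing the weak form of (\ref{eqn:theta1trans}) with $\overline{\bm\theta_p^{(1)}}$ and splitting into real and imaginary parts gives
\begin{equation*}
\|\tilde\mu_r^{-1/2}\nabla\times\bm\theta_p^{(1)}\|^2_{L^2} = -\nu\,\mathrm{Im}\int_B\bm\theta_p^{(0)}\cdot\overline{\bm\theta_p^{(1)}}\,\dif\bm\xi
\quad\text{and}\quad
\|\bm\theta_p^{(1)}\|^2_{L^2(B)} = -\mathrm{Re}\int_B\bm\theta_p^{(0)}\cdot\overline{\bm\theta_p^{(1)}}\,\dif\bm\xi .
\end{equation*}
The imaginary-part identity indeed yields $\|\bm\theta_p^{(1)}\|_{L^2(B)}\le\|\bm\theta_p^{(0)}\|_{L^2(B)}$ uniformly in $\nu$, which suffices for $|(\mathcal{I}(\nu))_{pq}|\le C\alpha^3\nu$. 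But the real-part identity only gives $\|\tilde\mu_r^{-1/2}\nabla\times\bm\theta_p^{(1)}\|^2_{L^2}\le C\nu$; it does \emph{not} give the $\nu$-independent curl bound you assert. Feeding this into Cauchy--Schwarz on (\ref{eqn:Rtensor}) yields $|(\mathcal{R}(\nu))_{pq}|\le C\alpha^3\nu$, not $C\alpha^3$. Your commutator bounds then come out as $C\alpha^6\nu^2$, $C\alpha^6\nu_1\nu_2$ and $C\alpha^6\nu_1\nu_2(\nu_1+\nu_2)$ for the three cases — all strictly worse than the stated estimates.

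The uniform-in-$\nu$ boundedness of the entries of $\mathcal{R}(\nu)$ is a genuinely nontrivial fact (it encodes the convergence of $\mathcal{R}(\nu)$ to a finite limit as $\nu\to\infty$), and it cannot be extracted from the energy identity plus Cauchy--Schwarz: those tools cannot see the cancellation in the bilinear form. The proof the paper points to — via (48a) and (48b) of~\cite{LedgerLionheart2020spect} — uses the spectral representation of $\bm\theta_p^{(1)}$ in eigenfunctions of the associated self-adjoint operator, under which the $\nu$-dependence of $(\mathcal{R}(\nu))_{pq}$ and $(\mathcal{I}(\nu))_{pq}$ enters only through rational multipliers such as $\nu^2/(\lambda_n^2+\nu^2)$ and $\nu\lambda_n/(\lambda_n^2+\nu^2)$. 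Uniform boundedness of the $\mathcal{R}$ entries then follows from $\nu^2/(\lambda_n^2+\nu^2)<1$ together with an $\ell^2$ Cauchy--Schwarz over the mode coefficients, and the $\mathcal{I}$ bound similarly. You would need that spectral input; the elementary testing argument alone leaves a gap of one power of $\nu$ in each of the three bounds.
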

Note that an alternative proof is possible, which follows from (48a) and (48b) in~\cite{LedgerLionheart2020spect}.
This result can be extended to the cases of $({\mathcal Z}^{(0)} (\nu))_{ij}$ and
 $(\tilde{\mathcal Z})_{ij}$ in a similar way and the result is presented below:
\begin{lemma} \label{lemma:commutatoeztild}
The off-diagonal elements of the commutators of ${\mathcal N}^{(0)} $ and ${\mathcal I} (\nu) $  for $0 < \nu < \infty$ 
 can be estimated as
\begin{align}
\left |  ( {\mathcal Z}^{(0)})_{ij} \right | = \left | 
( [  \tilde{\mathcal N}^{(0)}  , {\mathcal I}(\nu)])_{ij}
 \right | & \le C \alpha^6 \nu ,  
 \end{align}
 and
 between $\tilde{\mathcal R} (\nu)$ and ${\mathcal I} (\nu) $
  as \begin{align}
\left |  (\tilde{\mathcal Z})_{ij} \right | = \left | 
( [  \tilde{\mathcal R} (\nu) , {\mathcal I}(\nu)])_{ij}
 \right | & \le C \alpha^6 \nu ,  
 \end{align}
  where $ i \ne j$ , $C>0$ is independent of $\nu$ and $\alpha$.
\end{lemma}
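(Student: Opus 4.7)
The plan is to reduce both bounds to Lemma~\ref{lemma:oldcomresultupdate} by exploiting the additive decomposition $\tilde{\mathcal{R}}(\nu) = \mathcal{N}^{(0)} + \mathcal{R}(\nu)$. Bilinearity of the commutator gives
\begin{align}
\tilde{\mathcal{Z}}(\nu) = [\tilde{\mathcal{R}}(\nu), \mathcal{I}(\nu)] = [\mathcal{N}^{(0)}, \mathcal{I}(\nu)] + [\mathcal{R}(\nu), \mathcal{I}(\nu)] = \mathcal{Z}^{(0)}(\nu) + \mathcal{Z}(\nu), \nonumber
\end{align}
so, once the entrywise estimate on $\mathcal{Z}^{(0)}$ is established, the bound on $\tilde{\mathcal{Z}}$ follows from the triangle inequality applied entrywise, combined with the bound on $\mathcal{Z}$ already supplied by Lemma~\ref{lemma:oldcomresultupdate}. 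The work therefore focuses entirely on $\mathcal{Z}^{(0)}(\nu)$.

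The next step is to expand the $(i,j)$ entry of the commutator,
\begin{align}
(\mathcal{Z}^{(0)}(\nu))_{ij} = \sum_{k=1}^{3} \left( (\mathcal{N}^{(0)})_{ik} (\mathcal{I}(\nu))_{kj} - (\mathcal{I}(\nu))_{ik} (\mathcal{N}^{(0)})_{kj} \right), \nonumber
\end{align}
and to supply uniform-in-$\nu$ entrywise bounds on the two factors. From (\ref{eqn:NRI}a) it is immediate that $|(\mathcal{N}^{(0)})_{ij}| \le C\alpha^3$ with $C$ independent of $\nu$, since $\mathcal{N}^{(0)}$ is defined purely in terms of $\tilde{\bm{\theta}}_i^{(0)}$, which solves a $\nu$-independent transmission problem. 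For $\mathcal{I}(\nu)$, the integral representation (\ref{eqn:Itensor}) already carries an explicit $\alpha^3 \nu$ prefactor, so the entrywise estimate $|(\mathcal{I}(\nu))_{ij}| \le C \alpha^3 \nu$ reduces to showing that $\int_{B} | \bm{\theta}_i^{(1)} + \tilde{\bm{\theta}}_i^{(0)} + \bm{e}_i \times \bm{\xi} |^2 \, \dif \bm{\xi}$ is bounded independently of $\nu$. Combining these bounds in the commutator expansion then yields $|(\mathcal{Z}^{(0)}(\nu))_{ij}| \le C \alpha^6 \nu$ for $i \ne j$, and the triangle inequality with Lemma~\ref{lemma:oldcomresultupdate} closes the second bound.

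The main obstacle is this uniform-in-$\nu$ energy control on $\bm{\theta}_i^{(1)}$: as $\nu$ grows, the non-dimensional skin depth $\tau = \sqrt{2/(\mu_r \nu)}$ shrinks and the solution concentrates near $\Gamma$, so uniform $L^2(B)$ control is not a priori obvious. The standard route is to test (\ref{eqn:theta1trans}) against $\bm{\theta}_i^{(1)}$, integrate over $\bbR^3$, and take the imaginary part; the $\im \nu$ term on the left yields $\nu \| \bm{\theta}_i^{(1)} \|_{L^2(B)}^2$, and the right-hand side is controlled via Cauchy--Schwarz by $\nu \| \bm{\theta}_i^{(0)} \|_{L^2(B)} \| \bm{\theta}_i^{(1)} \|_{L^2(B)}$, giving $\| \bm{\theta}_i^{(1)} \|_{L^2(B)} \le \| \bm{\theta}_i^{(0)} \|_{L^2(B)}$ uniformly in $\nu$. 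This is precisely the ingredient used implicitly in the proof of Lemma~\ref{lemma:oldcomresultupdate}, and it adapts here without modification. An equivalent route, flagged in the remark following Lemma~\ref{lemma:oldcomresultupdate}, is to mirror equations (48a)--(48b) of~\cite{LedgerLionheart2020spect} with $\mathcal{N}^{(0)}$ in place of $\mathcal{R}$, producing a direct integral representation of $(\mathcal{Z}^{(0)})_{ij}$ in which $\nu$ appears explicitly as a linear multiplier; both routes deliver the same constant $C$ independent of $\nu$ and $\alpha$, completing the argument.
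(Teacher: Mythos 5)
Your proof is correct, and importantly the paper does not actually spell out a proof of this lemma: it merely asserts that the estimates "can be extended in a similar way" to Lemma~\ref{lemma:oldcomresultupdate} and notes an alternative route via (48a)--(48b) of~\cite{LedgerLionheart2020spect}. Your reconstruction supplies the missing argument and is entirely consistent with what the authors point to. The reduction of $\tilde{\mathcal Z}$ to ${\mathcal Z}^{(0)}+{\mathcal Z}$ by bilinearity of the commutator is exactly the right move given that $\tilde{\mathcal R}={\mathcal N}^{(0)}+{\mathcal R}$, and the entrywise bounds $|({\mathcal N}^{(0)})_{ij}|\le C\alpha^3$ (uniform in $\nu$ because $\tilde{\bm\theta}_i^{(0)}$ solves a $\nu$-independent problem) and $|({\mathcal I}(\nu))_{ij}|\le C\alpha^3\nu$ combine cleanly in the commutator sum to give $C\alpha^6\nu$. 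Your justification of the $\nu$-uniform $L^2(B)$ bound on $\bm\theta_i^{(1)}$ is also sound: testing the weak form of~(\ref{eqn:theta1trans}) with $\bm w=\bm\theta_i^{(1)}$, the curl--curl term $\int_\Omega\tilde\mu_r^{-1}|\nabla\times\bm\theta_i^{(1)}|^2$ is real, so taking the imaginary part isolates $-\nu\|\bm\theta_i^{(1)}\|^2_{L^2(B)}=\nu\,\mathrm{Re}\int_B\bm\theta_i^{(0)}\cdot\overline{\bm\theta_i^{(1)}}$, and Cauchy--Schwarz yields $\|\bm\theta_i^{(1)}\|_{L^2(B)}\le\|\bm\theta_i^{(0)}\|_{L^2(B)}$ independently of $\nu$, which, together with the $\nu$-independence of $\tilde{\bm\theta}_i^{(0)}+\bm e_i\times\bm\xi$, controls the integrand in~(\ref{eqn:Itensor}). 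The one cosmetic point is that you could have quoted the existing bound $|({\mathcal R}(\nu))_{ij}|\le C\alpha^3$ and run the same entrywise argument directly for ${\mathcal Z}$ as well, making Lemma~\ref{lemma:oldcomresultupdate} unnecessary as an input; as written you invoke it, which is fine since it is stated earlier and therefore available.
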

These results reveal that the commutators do not necessarily vanish and indicate their rate dependence with $\nu$, but not their magnitude. {Before we present an application of the previously presented metrics  and approximate measures of distance arising from our new semi-metrics},  it is crucial to understand the potential impact of the approximations made in the numerical computation and to assess whether any  predictions of non-commuting  behaviour in  the numerical predictions is due to the approximations made or is indeed a reflection of the behaviour predicted by Lemma~\ref{lemma:commutatoeztild}. 
{To do this, we first provide an outline of the procedure we use to compute the MPT coefficients to high accuracy. Then,  a--prori error estimates for the errors in the approximated tensor coefficients, and their effect on the commutators, are established:} First, for the errors associated with regularisation and, then, secondly, combining with the effects associated with the discrete approximation.  An appendix (Appendix~\ref{sect:appendix}) is included, which presents a constant free  a-posterori error indicator of the error associated with regularisation in practical computations.
As will be observed in our subsequent numerical examples, it will be very important to distinguish between the effects due to numerical regularisation and discretisation discussed in the following sections and the effects associated with the commutator, predicted by Lemmas~\ref{lemma:oldcomresultupdate} and~\ref{lemma:commutatoeztild}.

\subsection{Outline of numerical computation} \label{sect:numericalcomp}
In practice, to compute MPT coefficients, a finite domain $\Omega$ with convex truncation boundary located sufficiently far from the object $B$ is introduced and the condition ${\bm n} \times {\bm \theta}_i^{(1)} = {\bm 0}$ imposed as an approximation to the static far field decay of the field. This leads to
 the transmission problem for ${\bm \theta}_i^{(1)}\in {\mathbb C}^3$ stated in (\ref{eqn:theta1trans}) being replaced by 
\begin{subequations}\label{eqn:theta1transfull}
\begin{align}
\nabla \times {\mu}_r^{-1} \nabla \times {\bm \theta}_i^{(1)} - \im \nu {\bm \theta}_i^{(1)}  & =  \im \nu {\bm \theta}_i^{(0)} && \text{in $B$},\\
\nabla \times  \nabla \times {\bm \theta}_i^{(1)}   & =  {\bm 0}&& \text{in $\Omega \setminus \overline{B}$},\\
\nabla \cdot {\bm \theta}_i^{(1)} & = 0 && \text{in $\Omega \setminus \overline{B}$},\\
[ {\bm n} \times {\bm \theta}_i^{(1)}] & = {\bm 0} && \text{on $\Gamma$}, \\
[ {\bm n} \times \mu_r^{-1} \nabla \times {\bm \theta}_i^{(1)}] & = {\bm 0} && \text{on $\Gamma$}, \\
{\bm n} \times {\bm \theta}_i^{(1)}   & = {\bm 0}  && \text{on $\partial \Omega$}.
\end{align}
\end{subequations}
In order to compute the MPT spectral signature is obtained by sweeping through different values of $\omega$, (and, hence, $\nu$) solving this problem and then computing (\ref{eqn:NRI}) in each case. {The weak solution of (\ref{eqn:theta1transfull}) is: Find ${\bm \theta}_i^{(1) }\in X$ such that
\begin{align}
\int_\Omega \tilde{\mu}_r^{-1} \nabla \times {\bm \theta}_i^{(1) } \cdot \nabla \times \overline{\bm w} \dif {\vec \xi} - \im  \int_B \nu {\bm \theta}_i^{(1) } \cdot  \overline{\bm w} \dif {\vec \xi}  = \im \int_B \nu {\bm \theta}_i^{(0)} \cdot  \overline{\bm w} \dif {\vec \xi},
\end{align}
for all ${\bm w} \in X$ where $X = \{ {\bm u} \in {\bm H}(\hbox{curl}, \Omega) : {\bm n} \times {\bm u} = {\bm 0},  \nabla \cdot {\bm \theta}_i^{(1)} =0 \text{  in $\Omega \setminus \overline{B}$} \}$  and ${\bm \theta}_i^{(0)} =\tilde{\bm \theta}_i^{(0)} + {\bm e}_i \times {\bm \xi}$ is a source term obtained by weakly solving the transmission problem for $\tilde{\bm \theta}_i^{(0)} \in X$~\cite{ben2020} in a similar way to the above.}
 In our practical computations,  the Coulomb gauge $\nabla \cdot {\bm \theta}_i^{(1)} =0$ in $\Omega \setminus \overline{B}$ is replaced by regularisation  using a small regularisation parameter $\varepsilon \ll 1$~\cite{zaglmayrphd,ledgerzaglmayr2010} leading to the weak variational problem: find ${\bm \theta}_i^{(1), \varepsilon}\in X^\varepsilon $ such that
\begin{align}
\int_\Omega \tilde{\mu}_r^{-1} \nabla \times {\bm \theta}_i^{(1), \varepsilon} \cdot \nabla \times \overline{\bm w} \dif {\vec \xi} - \im  \int_B \nu {\bm \theta}_i^{(1), \varepsilon} \cdot  \overline{\bm w} \dif {\vec \xi} + \varepsilon \int_{\Omega \setminus \overline{B}}  {\bm \theta}_i^{(1), \varepsilon} \cdot  \overline{\bm w} \dif {\vec \xi}  = \im \int_B \nu {\bm \theta}_i^{(0),\varepsilon} \cdot  \overline{\bm w} \dif {\vec \xi},
\end{align}
for all ${\bm w} \in X^\varepsilon$, is sought where $X^\varepsilon = \{ {\bm u} \in {\bm H}(\hbox{curl}, \Omega) : {\bm n} \times {\bm u} = {\bm 0}\}$  and ${\bm \theta}_i^{(0),\varepsilon} =\tilde{\bm \theta}_i^{(0),\varepsilon} + {\bm e}_i \times {\bm \xi}$ is a source term with $\tilde{\bm \theta}_i^{(0),\varepsilon}\in X^\varepsilon $ obtained in a similar way. However, as ${\bm \theta}_i^{(0),\varepsilon}$ is independent of $\nu$, it is assumed to be a known source in the following. Then, discrete approximations $ {\bm \theta}_i^{(1),hp}\in X^{hp} \cap X^\varepsilon $  to the weak solutions ${\bm \theta}_i^{(1), \varepsilon}\in X^\varepsilon$ are sought  using: find  $ {\bm \theta}_i^{(1),hp} \in X^{hp} \cap X^\varepsilon$ such that
\begin{align}
\int_\Omega \tilde{\mu}_r^{-1} \nabla \times {\bm \theta}_i^{(1), hp} \cdot \nabla \times \overline{\bm w}^{hp} \dif {\vec \xi} - \im  \int_B \nu {\bm \theta}_i^{(1), hp} \cdot  \overline{\bm w}^{hp} \dif {\vec \xi} + \varepsilon \int_{\Omega \setminus \overline{B}}  {\bm \theta}_i^{(1), hp} \cdot  \overline{\bm w}^{hp} \dif {\vec \xi}  = \im \int_B \nu {\bm \theta}_i^{(0),hp} \cdot  \overline{\bm w}^{hp} \dif {\vec \xi},
\end{align}
for all ${\bm w}^{hp} \in X^{hp} \cap X^\varepsilon$, where $X^{hp} \subset {\vec H}(\hbox{curl})$ is a set of ${\vec H}(\hbox{curl})$ conforming finite element basis functions~\cite{zaglmayrphd}. The discrete approximations are obtained by forming a non-overlapping partition of $\Omega$ using unstructured tetrahedral meshes with the addition of thin prismatic layers to handle thin skin-depth effects~\cite{Elgy2024_preprint}. The discretisation allows for refinement of the mesh ($h$-refinement) and increasing the order of the elements ($p$-refinement). We employ the \texttt{NGSolve} finite element library~\cite{NGSolve} and its integrated \texttt{Netgen} mesh generator~\cite{netgendet} in our \texttt{MPT-Calculator}~\footnote{The open source  \texttt{MPT-Calculator} software can be accessed at \texttt{https://github.com/MPT-Calculator/MPT-Calculator} and version 1.51 with commit number 94a825c
 was employed in this work.}  software for this purpose.

\subsection{Errors due to regularisation in the continuous problem} \label{sect:errorscontproblem}
An a-prori error estimate obtained for ${\bm A}$-based formulation of the regularised magenetostaic and eddy current problems with respect to the un-regularised weak solution is stated in~\cite{ledgerzaglmayr2010} and this can immediately be applied to the approximation of the weak solutions $\tilde{\bm \theta}_i^{(0)} \in X$ by $\tilde{\bm \theta}_i^{(0), \varepsilon} \in X^\varepsilon$ 
and
 ${\bm \theta}_i^{(1)} \in X$ by ${\bm \theta}_i^{(1), \varepsilon}  \in X^\varepsilon$ leading to
\begin{align}
\| \tilde{\bm \theta}_i^{(0), \varepsilon} - \tilde{\bm \theta}_i^{(0)}  \|_{{\bm H}(\hbox{curl})} \le C \varepsilon  ,\qquad 
\| {\bm \theta}_i^{(1), \varepsilon} - {\bm \theta}_i^{(1)}  \|_{{\bm H}(\hbox{curl})} \le C \varepsilon \nu .
\end{align}
These  estimates can be used to estimate the error associated with using regularised solutions to obtain the tensor coefficients
\begin{subequations}
\begin{align}
(\mathcal{N}^{0,\varepsilon} )_{ij}&:=\alpha^3\delta_{ij}\int_{B}(1-\tilde{\mu}_r^{-1})\dif \bm{\xi}+\frac{\alpha^3}{4}\int_{\Omega }\tilde{\mu}_r^{-1}\nabla\times\tilde{\bm{\theta}}_i^{(0),\varepsilon }\cdot\nabla\times\tilde{\bm{\theta}}_j^{(0),\varepsilon}\dif \bm{\xi},\\
({\mathcal R}^\varepsilon)_{ij} = &- \frac{\alpha^3}{4}  \int_\Omega \tilde{\mu}_r^{-1} \nabla \times {\bm \theta}_i^{(1), \varepsilon} \cdot \nabla \times \overline{{\bm \theta}_j^{(1), \varepsilon}} \dif {\bm \xi}, \\
({\mathcal I}^\varepsilon)_{ij} = &\frac{\alpha^3}{4}  \int_B \frac{1}{\nu} \nabla \times {\mu}_r^{-1} \nabla \times {\bm \theta}_i^{(1), \varepsilon} \cdot  \nabla \times {\mu}_r^{-1} \nabla \times \overline{{\bm \theta}_j^{(1), \varepsilon}} \dif {\bm \xi},
\end{align}
\end{subequations}
with respect to $(\mathcal{N}^{0} )_{ij}$, $({\mathcal R})_{ij}$ and $({\mathcal I})_{ij}$, respectively, which is achieved through the following estimates:
%%%%%%%
\begin{lemma} \label{lemma:errorduetoreg}
{Given  weak regularised approximations 
 $\tilde{\bm \theta}_i^{(0), \varepsilon} \in  X^\varepsilon$ to  $\tilde{\bm \theta}_i^{(0)}\in X$ and  ${\bm \theta}_i^{(1), \varepsilon} \in  X^\varepsilon$ to ${\bm \theta}_i^{(1)} \in X$, the associated error in the regularised tensor coefficients on a finite domain $\Omega$ can be estimated as}
\begin{subequations}
\begin{align}
\left | ({\mathcal N}^{(0)})_{ij} - ({\mathcal N}^{(0),\varepsilon})_{ij} \right | \le & C \varepsilon  \alpha^3   \left ( \varepsilon  + \sum_{n=1}^3 \| \nabla \times  \tilde{\bm \theta}_n^{(0), \varepsilon} \|_{L^2(\Omega)}  \right ), \label{errorregno}\\
\left | ({\mathcal R})_{ij} - ({\mathcal R}^\varepsilon)_{ij} \right | \le & C \varepsilon \nu \alpha^3   \left ( \varepsilon \nu + \sum_{n=1}^3 \| \nabla \times  {\bm \theta}_n^{(1), \varepsilon} \|_{L^2(\Omega)}  \right ), \label{errorregr}\\
\left | ({\mathcal I})_{ij} - ({\mathcal I}^\varepsilon)_{ij} \right | \le & C \varepsilon \nu^2 \alpha^3   \left ( 1+ \varepsilon \nu + \sum_{n=1}^3 \|   {\bm \theta}_n^{(1), \varepsilon} \|_{L^2(B)}  \right ).  \label{errorregi}
\end{align}
\end{subequations}
\end{lemma}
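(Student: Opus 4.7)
The plan is to exploit the bilinear (sesquilinear) structure of each tensor coefficient: add and subtract mixed terms, apply Cauchy--Schwarz, and invoke the a-priori regularisation estimates $\|\tilde{\bm\theta}_n^{(0),\varepsilon} - \tilde{\bm\theta}_n^{(0)}\|_{{\bm H}(\hbox{curl})} \le C\varepsilon$ and $\|{\bm\theta}_n^{(1),\varepsilon} - {\bm\theta}_n^{(1)}\|_{{\bm H}(\hbox{curl})} \le C\varepsilon\nu$ stated immediately before the lemma. Throughout I write $\tilde{\bm e}_n^{(0)} := \tilde{\bm\theta}_n^{(0)} - \tilde{\bm\theta}_n^{(0),\varepsilon}$ and ${\bm e}_n^{(1)} := {\bm\theta}_n^{(1)} - {\bm\theta}_n^{(1),\varepsilon}$, and absorb the (uniform) bound on $\tilde\mu_r^{-1}$ into the generic constant $C$.

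For (\ref{errorregno}) only the curl-curl integral in $({\mathcal N}^{(0)})_{ij}$ is affected by regularisation. Writing $\nabla\times\tilde{\bm\theta}_i^{(0)}\cdot\nabla\times\tilde{\bm\theta}_j^{(0)} = \nabla\times(\tilde{\bm\theta}_i^{(0),\varepsilon}+\tilde{\bm e}_i^{(0)})\cdot\nabla\times(\tilde{\bm\theta}_j^{(0),\varepsilon}+\tilde{\bm e}_j^{(0)})$ and expanding yields two linear cross terms plus a quadratic error-squared term. Cauchy--Schwarz with $\|\nabla\times\tilde{\bm e}_n^{(0)}\|_{L^2(\Omega)} \le C\varepsilon$ bounds the cross terms by $C\varepsilon\sum_n \|\nabla\times\tilde{\bm\theta}_n^{(0),\varepsilon}\|_{L^2(\Omega)}$ and the quadratic term by $C\varepsilon^2$; pulling out the $\alpha^3/4$ prefactor gives (\ref{errorregno}). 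The estimate (\ref{errorregr}) for ${\mathcal R}$ is proved identically, replacing $\tilde{\bm e}_n^{(0)}$ by ${\bm e}_n^{(1)}$, which produces the additional factor of $\nu$ from the a-priori bound.

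The estimate (\ref{errorregi}) for ${\mathcal I}$ requires a preliminary reduction. Testing the regularised variational problem with functions supported strictly inside $B$ makes the penalty $\varepsilon\int_{\Omega\setminus\overline{B}}$ vanish, so the strong identity $\nabla\times\mu_r^{-1}\nabla\times{\bm\theta}_n^{(1),\varepsilon} = \mathrm{i}\nu({\bm\theta}_n^{(1),\varepsilon} + {\bm\theta}_n^{(0),\varepsilon})$ holds in $B$, and analogously without $\varepsilon$ for ${\bm\theta}_n^{(1)}$. Substituting into $({\mathcal I}^\varepsilon)_{ij}$ (and into $({\mathcal I})_{ij}$) reduces each to the $L^2(B)$ sesquilinear form $\frac{\alpha^3\nu}{4}\int_B ({\bm\theta}_i^{(1),\varepsilon}+{\bm\theta}_i^{(0),\varepsilon})\cdot\overline{({\bm\theta}_j^{(1),\varepsilon}+{\bm\theta}_j^{(0),\varepsilon})}\dif{\bm\xi}$. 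The same add-and-subtract argument then produces $L^2(B)$ norms of the errors, controlled by $\|{\bm e}_n^{(1)}\|_{L^2(B)} \le C\varepsilon\nu$ and $\|\tilde{\bm e}_n^{(0)}\|_{L^2(B)} \le C\varepsilon$; the boundedness of $\|{\bm\theta}_n^{(0),\varepsilon}\|_{L^2(B)}$ (independent of $\nu$ and $\varepsilon$) accounts for the leading ``$1$'' in the bracketed factor, and absorbing the smaller $\varepsilon$ into $\varepsilon\nu$ in the regime of practical interest yields the $\varepsilon\nu^2$ prefactor.

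The main obstacle is the ${\mathcal I}$ case: the integrand is posed in terms of double curls, which are not directly controlled by the available $\bm H(\hbox{curl})$ error bound. The key step is the strong-form substitution inside $B$, which is only available because the regularisation penalty is supported outside $B$; without this observation one would need a sharper estimate than the given a-priori bound provides. A secondary bookkeeping point is tracking the $\nu$ powers: in (\ref{errorregi}) one $\nu$ arises from the prefactor in ${\mathcal I}$ and a second from the a-priori bound on ${\bm e}_n^{(1)}$, whereas (\ref{errorregno}) has none and (\ref{errorregr}) has exactly one.
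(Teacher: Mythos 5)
Your argument follows the paper's proof almost exactly: for (\ref{errorregno}) and (\ref{errorregr}) the same bilinear add-and-subtract decomposition followed by Cauchy--Schwarz and the a-priori ${\bm H}(\hbox{curl})$ regularisation estimates, and for (\ref{errorregi}) the same key reduction via the strong form of the transmission problem inside $B$ to an $L^2(B)$ sesquilinear form, which you correctly observe is available because the penalty is supported in $\Omega\setminus\overline{B}$ (the paper uses this silently). The one small divergence is that you carry the regularisation error in ${\bm\theta}_i^{(0),\varepsilon}$ into the ${\mathcal I}$ estimate and then absorb the resulting $O(\varepsilon)$ contribution into $O(\varepsilon\nu)$ ``in the regime of practical interest,'' whereas the paper sidesteps this entirely by invoking its earlier convention (Section 5.1) that ${\bm\theta}_i^{(0),\varepsilon}$ is treated as a known source and writes ${\bm\theta}_j^{(0)}$ directly in the reduced $L^2(B)$ form; your bookkeeping is actually the more careful one, but your absorption step implicitly requires $\nu\gtrsim 1$, a restriction the stated bound does not advertise.
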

%%%%
\begin{proof}
To show (\ref{errorregno}), it is useful to introduce  $n({\bm u},{\bm v}):=  \frac{\alpha^3}{4}  \int_\Omega \tilde{\mu}_r^{-1} \nabla \times {\bm u} \cdot \nabla \times \overline{{\bm v}} \dif {\bm \xi}$~\footnote{The vector solutions $\tilde{\vec \theta}_j^{(0)}$ are real valued, but including the complex conjugate in the defiition of  $n({\bm u},{\bm v})$ is useful for what follows.}. It then follows that
\begin{align}
 ({\mathcal N}^{(0)})_{ij} - ({\mathcal N}^{(0),\varepsilon})_{ij}  = & n(\tilde{\bm \theta}_i^{(0) } , \tilde{\bm \theta}_j^{(0) }) -  n(\tilde{\bm \theta}_i^{(0), \varepsilon } , \tilde{\bm \theta}_j^{(0), \varepsilon }) \nonumber \\
=& n(\tilde{\bm \theta}_i^{(0) }- \tilde{\bm \theta}_i^{(0), \varepsilon } , \tilde{\bm \theta}_j^{(0) }) + n(  \tilde{\bm \theta}_i^{(0), \varepsilon } , \tilde{\bm \theta}_j^{(0) }) -  n(\tilde{\bm \theta}_i^{(0), \varepsilon } , \tilde{\bm \theta}_j^{(0), \varepsilon })  \nonumber \\
=& n(\tilde{\bm \theta}_i^{(0) }-\tilde {\bm \theta}_i^{(0), \varepsilon } , \tilde{\bm \theta}_j^{(0) }) + n(  \tilde{\bm \theta}_i^{(0), \varepsilon } , \tilde{\bm \theta}_j^{(0) } - \tilde{\bm \theta}_j^{(0), \varepsilon })  \nonumber \\
=& n(\tilde{\bm \theta}_i^{(0) }- \tilde{\bm \theta}_i^{(0), \varepsilon } ,\tilde{\bm \theta}_j^{(0) } -\tilde{\bm \theta}_j^{(0) ,\varepsilon} ) +n(\tilde{\bm \theta}_i^{(0) }- \tilde{\bm \theta}_i^{(0), \varepsilon } , {\bm \theta}_j^{(0) ,\varepsilon} ) \nonumber \\
& + n(  \tilde{\bm \theta}_i^{(0), \varepsilon } , \tilde{\bm \theta}_j^{(0) } - \tilde{\bm \theta}_j^{(0), \varepsilon })  \nonumber .
     \end{align}
Thus, by the Cauchy-Schwartz inequality,
\begin{align}
\left |
({\mathcal N}^{(0)})_{ij} - ({\mathcal N}^{(0),\varepsilon})_{ij} 
\right | \le &  C \alpha^3 \left (  \| \nabla \times (\tilde{\bm \theta}_i^{(0) }- \tilde{\bm \theta}_i^{(0), \varepsilon }) \|_{L^2 (\Omega)}  \| \nabla \times (\tilde{\bm \theta}_j^{(0) }-\tilde{\bm \theta}_j^{(0), \varepsilon }) \|_{L^2 (\Omega)} 
\right . \nonumber \\
 & + \left .
 \| \nabla \times (\tilde{\bm \theta}_i^{(0) }- {\bm \theta}_i^{(0), \varepsilon }) \|_{L^2 (\Omega)}
 \| \nabla \times \tilde{\bm \theta}_j^{(0) , \varepsilon}\|_{L^2 (\Omega)} + \right . \nonumber\\
 & + \left .
 \| \nabla \times (\tilde{\bm \theta}_j^{(0) }- {\bm \theta}_j^{(0), \varepsilon }) \|_{L^2 (\Omega)}
 \| \nabla \times \tilde{\bm \theta}_i^{(0) , \varepsilon}\|_{L^2 (\Omega)}
\right ),
\end{align}     
with $C$ independent of $\nu,\alpha$, but depending on $\mu_r$.    
Since, $\| \nabla \times (\tilde{\bm \theta}_i^{(0) }- \tilde{\bm \theta}_i^{(0), \varepsilon }) \|_{L^2 (\Omega)} \le \| \tilde{\bm \theta}_i^{(0) }- \tilde{\bm \theta}_i^{(0), \varepsilon }  \|_{{\bm H} (\text{curl},\Omega)} \le C \varepsilon $ the result in (\ref{errorregno}) immediately follows.

The result for (\ref{errorregr}) follows analogously by writing $r({\bm u},{\bm v}):=-n({\bm u},{\bm v})=  - \frac{\alpha^3}{4}  \int_\Omega \tilde{\mu}_r^{-1} \nabla \times {\bm u} \cdot \nabla \times \overline{{\bm v}} \dif {\bm \xi}$ so that
\begin{align}
 ({\mathcal R})_{ij} - ({\mathcal R}^\varepsilon)_{ij}  =& r({\bm \theta}_i^{(1) }- {\bm \theta}_i^{(1), \varepsilon } , {\bm \theta}_j^{(1) } -{\bm \theta}_j^{(1) ,\varepsilon} ) +r({\bm \theta}_i^{(1) }- {\bm \theta}_i^{(1), \varepsilon } , {\bm \theta}_j^{(1) ,\varepsilon} ) \nonumber \\
& + r(  {\bm \theta}_i^{(1), \varepsilon } , {\bm \theta}_j^{(1) } - {\bm \theta}_j^{(1), \varepsilon })  \nonumber ,
     \end{align}
and by the Cauchy-Schwartz inequality,
\begin{align}
\left |
({\mathcal R})_{ij} - ({\mathcal R}^\varepsilon)_{ij} 
\right | \le &  C \alpha^3 \left (  \| \nabla \times ({\bm \theta}_i^{(1) }- {\bm \theta}_i^{(1), \varepsilon }) \|_{L^2 (\Omega)}  \| \nabla \times ({\bm \theta}_j^{(1) }- {\bm \theta}_j^{(1), \varepsilon }) \|_{L^2 (\Omega)} 
\right . \nonumber \\
 & + \left .
 \| \nabla \times ({\bm \theta}_i^{(1) }- {\bm \theta}_i^{(1), \varepsilon }) \|_{L^2 (\Omega)}
 \| \nabla \times {\bm \theta}_j^{(1) , \varepsilon}\|_{L^2 (\Omega)} + \right . \nonumber\\
 & + \left .
 \| \nabla \times ({\bm \theta}_j^{(1) }- {\bm \theta}_j^{(1), \varepsilon }) \|_{L^2 (\Omega)}
 \| \nabla \times {\bm \theta}_i^{(1) , \varepsilon}\|_{L^2 (\Omega)}
\right ).
\end{align}         
Then, since, $\| \nabla \times ({\bm \theta}_i^{(1) }- {\bm \theta}_i^{(1), \varepsilon }) \|_{L^2 (\Omega)} \le \| {\bm \theta}_i^{(1) }- {\bm \theta}_i^{(1), \varepsilon }  \|_{{\bm H} (\text{curl},\Omega)} \le C \varepsilon \nu$, the result in (\ref{errorregr}) immediately follows.

To show (\ref{errorregi}), it is useful to note 
\begin{align}
({\mathcal I}^\varepsilon)_{ij} = & \frac{\alpha^3}{4}  \int_B \frac{1}{\nu}  \nabla \times {\mu}_r^{-1} \nabla \times 
{\bm \theta}_i^{(1), \varepsilon } \cdot
 \nabla \times \mu_r^{-1} \nabla \times \overline{ {\bm \theta}_j^{(1), \varepsilon } } \dif {\bm \xi} \nonumber\\
= &  \frac{\alpha^3}{4}   \int_B \nu \left ( {\bm \theta}_i^{(1),\varepsilon } +{\bm  \theta}_i^{(0)}  \right ) \cdot 
\left ( \overline{{\bm \theta}_j^{(1),\varepsilon }} + {\bm \theta}_j^{(0)}  \right ) \dif {\bm \xi} \nonumber,
\end{align}
and
introduce  $i({\bm u},{\bm v}):=\frac{\alpha^3}{4} \int_B \nu {\bm u} \cdot \overline{\bm v} \dif {\bm \xi}$ so that
\begin{align}
 ({\mathcal I})_{ij} - ({\mathcal I}^\varepsilon)_{ij}
 & = i ( {\bm \theta}_i^{(1)} + {\bm \theta}_i^{(0)}
 , {\bm \theta}_j^{(1) } + {\bm \theta}_j^{(0)}  ) -i ( {\bm \theta}_i^{(1), \varepsilon} + {\bm \theta}_i^{(0)}
 , {\bm \theta}_j^{(1) , \varepsilon} + {\bm \theta}_j^{(0)}  ) \nonumber \\
 & = i ({\bm \theta}_i^{(1)}, {\bm \theta}_j^{(1)}) +i( {\bm \theta}_i^{(1)}, {\bm \theta}_j^{(0)} )+i( {\bm \theta}_i^{(0)}, {\bm \theta}_j^{(1)}) \nonumber\\
 & \qquad - \left ( i ({\bm \theta}_i^{(1), \varepsilon}, {\bm \theta}_j^{(1), \varepsilon}) +i ( {\bm \theta}_i^{(1), \varepsilon }, {\bm \theta}_j^{(0)} )+i( {\bm \theta}_i^{(0)}, {\bm \theta}_j^{(1),\varepsilon}) 
  \right ) \nonumber \\
  & = i ({\bm \theta}_i^{(1)} - {\bm \theta}_i^{(1), \varepsilon} , {\bm \theta}_j^{(1)}) +
   i ( {\bm \theta}_i^{(1), \varepsilon} , {\bm \theta}_j^{(1)}) 
  +i( {\bm \theta}_i^{(1)}
 -{\bm \theta}_i^{(1),\varepsilon}
  , {\bm \theta}_j^{(0)} )+i( {\bm \theta}_i^{(0)}, {\bm \theta}_j^{(1)}-{\bm \theta}_j^{(1),\varepsilon }) \nonumber\\
 & \qquad -  i ({\bm \theta}_i^{(1), \varepsilon}, {\bm \theta}_j^{(1), \varepsilon})  
  \nonumber \\
  & = i ({\bm \theta}_i^{(1)} - {\bm \theta}_i^{(1), \varepsilon} , {\bm \theta}_j^{(1)}- {\bm \theta}_j^{(1), \varepsilon}  ) 
  +i( {\bm \theta}_i^{(1)}
 -{\bm \theta}_i^{(1),\varepsilon}
  , {\bm \theta}_j^{(0)} )+i( {\bm \theta}_i^{(0)}, {\bm \theta}_j^{(1)}-{\bm \theta}_j^{(1),\varepsilon }) \nonumber\\
 & \qquad +  i ({\bm \theta}_i^{(1)}-
 {\bm \theta}_i^{(1), \varepsilon}, {\bm \theta}_j^{(1), \varepsilon})  +  i ({\bm \theta}_i^{(1), \varepsilon},
 {\bm \theta}_j^{(1)} - {\bm \theta}_j^{(1), \varepsilon}   ) .
  \nonumber  
\end{align}
Thus, by the Cauchy-Schwartz inequality, and noting that $\  \| {\bm \theta}_j^{(0)} \|_{L^2(B)}  \le C$ independent of $\nu$, then
\begin{align}
\left |  ({\mathcal I})_{ij} - ({\mathcal I}^\varepsilon)_{ij} \right | \le & C \alpha^3 \nu \left (
\| {\bm \theta}_i^{(1)} - {\bm \theta}_i^{(1),  \varepsilon } \|_{L^2(B)} \| {\bm \theta}_j^{(1)} - {\bm \theta}_j^{(1), \varepsilon } \|_{L^2(B)}  +
\| {\bm \theta}_i^{(1)} - {\bm \theta}_i^{(1) ,\varepsilon} \|_{L^2(B) } 
+\| {\bm \theta}_j^{(1)} - {\bm \theta}_j^{(1) ,\varepsilon} \|_{L^2(B) }\right . \nonumber \\
&  \left . \qquad  +  \| {\bm \theta}_i^{(1)} - {\bm \theta}_i^{(1),\varepsilon } \|_{L^2(B)}  \|  {\bm \theta}_j^{(1),\varepsilon } \|_{L^2(B)}
+ \| {\bm \theta}_j^{(1)} - {\bm \theta}_j^{(1),\varepsilon } \|_{L^2(B)}  \|  {\bm \theta}_i^{(1) ,\varepsilon} \|_{L^2(B)} 
 \right ) \nonumber .
\end{align}
Finally, since $\|  ({\bm \theta}_i^{(1) }- {\bm \theta}_i^{(1), \varepsilon }) \|_{L^2 (\Omega)} \le \| {\bm \theta}_i^{(1) }- {\bm \theta}_i^{(1), \varepsilon }  \|_{{\bm H} (\text{curl},\Omega)} \le C \varepsilon \nu$, then the result in (\ref{errorregi}) immediately follows.

\end{proof}

%%%%
Introducing
\begin{align}
({\mathcal Z}^\varepsilon)_{ij} : = & ([ {\mathcal R}^\varepsilon (\nu)  , {\mathcal I}^\varepsilon (\nu)])_{ij},\nonumber
\end{align}
as the components of the commutator computed with the components of ${\mathcal R}^\varepsilon$ and ${\mathcal I}^\varepsilon$ then
\begin{align}
{\mathcal Z} - {\mathcal Z}^\varepsilon = & ( {\mathcal R} {\mathcal I } - {\mathcal I}{\mathcal R}) - 
( {\mathcal R}^\varepsilon {\mathcal I }^\varepsilon - {\mathcal I}^\varepsilon {\mathcal R}^\varepsilon)\nonumber \\
 = &   ( {\mathcal R} - {\mathcal R}^\varepsilon) {\mathcal I} + {\mathcal R}^\varepsilon ( {\mathcal I}- {\mathcal I}^\varepsilon) - ( {\mathcal I} - {\mathcal I}^\varepsilon) {\mathcal R} + {\mathcal I}^\varepsilon ( {\mathcal R} - {\mathcal R}^\epsilon ) \nonumber,
\end{align}
so that
\begin{align}
\left \| {\mathcal Z} - {\mathcal Z}^\varepsilon  \right \| \le
\|   {\mathcal R} - {\mathcal R}^\varepsilon  \| \|  {\mathcal I}  \| + 
\|  {\mathcal R}^\varepsilon \| \|   {\mathcal I}- {\mathcal I}^\varepsilon \| +
\|   {\mathcal I} - {\mathcal I}^\varepsilon \| \|  {\mathcal R} \| +
\|  {\mathcal I}^\varepsilon \| \|  {\mathcal R} - {\mathcal R}^\epsilon \|. \nonumber
\end{align}
Then, from  Lemma~\ref{lemma:errorduetoreg} and using the spectral representation of ${\vec \theta}_i^{(1)}$ from (42) in~\cite{LedgerLionheart2020spect}, the leading order terms in the error can be estimated as
\begin{align}
\left \| {\mathcal Z} - {\mathcal Z}^\varepsilon  \right \| \le C \alpha^3 \varepsilon \nu \left ( 
\|  {\mathcal I}  \| + 
\nu \|  {\mathcal R}^\varepsilon \|   +
\nu \|  {\mathcal R} \| +
\| {\mathcal I}^\varepsilon \| \right ). \label{eqn:errestZZepsilon}
\end{align}
In a similar way, introducing
\begin{align}
{\mathcal Z}^{(0),\varepsilon} (\nu) := & [ {\mathcal N} ^{(0),\varepsilon}, {\mathcal I}^\varepsilon (\nu) ],\qquad
\tilde{\mathcal Z}^\varepsilon : =  [ \tilde{\mathcal R}^\varepsilon (\nu)  , {\mathcal I}^\varepsilon (\nu)] ,
\end{align}
then
\begin{align}
\left \| {\mathcal Z}^{(0)} - {\mathcal Z}^{(0),\varepsilon}  \right \| \le &
\|   {\mathcal N}^{(0)} - {\mathcal N}^{(0),\varepsilon}  \| \|  {\mathcal I}  \| + 
\|  {\mathcal N}^{(0),\varepsilon} \| \|   {\mathcal I}- {\mathcal I}^\varepsilon \| +
\|   {\mathcal I} - {\mathcal I}^\varepsilon \| \|  {\mathcal N}^{(0)} \| +
\|  {\mathcal I}^\varepsilon \| \|  {\mathcal N}^{(0)} - {\mathcal N}^{(0),\epsilon} \|\nonumber\\
 \le& C \alpha^3 \varepsilon \left (
\|  {\mathcal I}  \| + \|  {\mathcal I}^\varepsilon  \| +\nu^2 \left ( \|  {\mathcal N}^{(0),\varepsilon} \| +  \|  {\mathcal N}^{(0)} \|  \right )
\right ), \label{eqn:errestZ0Z0epsilon}
\end{align}
and
\begin{align}
\left \| \tilde{\mathcal Z} - \tilde{\mathcal Z}^\varepsilon  \right \| \le &
(\|   {\mathcal N}^{(0)} - {\mathcal N}^{(0),\varepsilon}\| +\| {\mathcal R} - {\mathcal R}^\varepsilon \|) \|  {\mathcal I}  \| + 
\|  \tilde{\mathcal R}^\varepsilon \| \|   {\mathcal I}- {\mathcal I}^\varepsilon \| +
\|   {\mathcal I} - {\mathcal I}^\varepsilon \| \|  \tilde{\mathcal R} \|  \nonumber \\
&+
\|  {\mathcal I}^\varepsilon \|(\|  {\mathcal N}^{(0)} - {\mathcal N}^{(0),\epsilon} \|+ \|  {\mathcal R} - {\mathcal R}^\epsilon \|) \nonumber\\
\le & C \alpha^3 \varepsilon \left(\|  {\mathcal I}  \| + 
\| {\mathcal I}^\varepsilon \|  +  \nu \left ( 
\|  {\mathcal I}  \| _F+ 
\nu \|  {\mathcal R}^\varepsilon \|   +
\nu \|  {\mathcal R} \| +
\| {\mathcal I}^\varepsilon \| \right )\right ).\label{eqn:erresttilZtilZepsilon}
\end{align}
From (\ref{eqn:errestZZepsilon}), (\ref{eqn:errestZ0Z0epsilon}) and (\ref{eqn:erresttilZtilZepsilon}) we can observe that the regularisation will perturb the commutator and this perturbation will be of order $\varepsilon$. This effect must be considered carefully when considering the extent to which ${\mathcal N}^{(0)}$, ${\mathcal R}$ and ${\mathcal I}$ commute. In the next section, we consider the additional errors associated with numerical discretisation.

\subsection{Errors due to discretisation}

In the case when the continuous solutions ${\vec \theta}_i^{(1),\varepsilon}$ and ${\vec \theta}_i^{(0),\varepsilon}$ are smooth vector fields (e.g. corresponding to when  $B$ has homogenous materials, $\Gamma = \partial B$ is a  smooth surface and $\omega$ is small, such that the skin depth  $\delta$  is similar to the size of the object $\alpha$  (i.e. $\tau  \approx 1$)), then, based on known results in two-dimensions~\cite{ainsworth2002}, the rates of convergence are conjectured to follow the same pattern to those known for elliptic problems using $H^1$ conforming $hp$ finite element discretisations~\cite{szabo1991finite} in three-dimensions, namely
\begin{align}
\| {\bm \theta}_i^{(1), \varepsilon} - {\bm \theta}_i^{(1), hp} \|_{{\bm H}(\text{curl},\Omega)} \le  k{ N_{Dof}^{ - p/3}},
\end{align}
 {where $N_{Dof}$ denotes the number of degrees of freedom,} for $h$-refinement using order $p$ elements and the estimate
\begin{align}
\| {\bm \theta}_i^{(1), \varepsilon} - {\bm \theta}_i^{(1), hp} \|_{{\bm H}(\text{curl},\Omega)} \le  k\exp( -a N_{Dof}^{\vartheta}), \label{eqn:expconv}
\end{align}
with $\vartheta \ge 1/3$ for $p$-refinement, with constants $k$ and $a$ depending on the problem parameters. However, for higher frequencies, larger conductivities and/or high $\mu_r$,  $\tau $ will becomes small compared to $|B|$ and $p$-refinement provides only algebraic, rather than exponential, convergence. Furthermore, in the case where  $B$ has sharp edges or corners, in addition to  higher frequencies, larger conductivities and/or high $\mu_r$, then $p$-refinement will also become algebraic. 
In such cases, combining $p$--refinement with local $h$-refinement,  in the vicinity of the object boundary by using prismatic layers, can improve the rate of convergence and, if the correct combination of $h$-- and $p$--refinements are made, lead to exponential convergence similar to (\ref{eqn:expconv}) with $\vartheta \approx 1/5$. Examples that demonstrate this in practice for MPT spectral signatue object characterisation can be found in~\cite{Elgy2024_preprint}.

 For simplicity in the following, we assume that $B$  and $\Omega$ are Lipschitz polyhedra so that they can be exactly discretised by flat faced tetrahedral meshes.  Assuming that $h$ and $p$ can be chosen such that (\ref{eqn:expconv}) holds, the following error estimate for the approximated tensor coefficients with respect to the continuous regularised solution can be obtained:

\begin{lemma} \label{lemma:errorhpfem}
{Given  weak $hp$ finite element approximations ${  \tilde{\bm \theta}_i^{(0), hp }} \in X^{hp} \cap  X^\varepsilon $ to $\tilde{\bm \theta}_i^{(0), \varepsilon } \in  X^\varepsilon$ and ${  {\bm \theta}_i^{(1), hp }} \in   X^{hp} \cap  X^\varepsilon$ to ${\bm \theta}_i^{(1), \varepsilon } \in  X^\varepsilon$, the associated error in the discrete approximation of the tensor coefficients with respect to the regularised coefficients can be estimated as}
\begin{align}
\left | ({\mathcal N}^{(0),\varepsilon})_{ij} - ({\mathcal N}^{(0),hp} )_{ij} \right | \le & C    \exp(  -a N_{dof}^\theta)  \left (  \exp(  - a  N_{dof}^\theta)  +  \| \nabla \times  \tilde{\bm \theta}_i^{(0), hp} \|_{L^2(\Omega)} +  \| \nabla \times  \tilde{\bm \theta}_j^{(0), hp} \|_{L^2(\Omega)} \right ), \label{errorreghpn0}
 \end{align}
 with $C$ and $a$ depending on $\varepsilon$ and $\alpha$ and
\begin{subequations}
 \begin{align} 
\left | ({\mathcal R}^\varepsilon)_{ij} - ({\mathcal R}^{hp} )_{ij} \right | \le & C  \exp(  - a  N_{dof}^\theta)  \left (  \exp(  - a  N_{dof}^\theta)  +  \| \nabla \times  {\bm \theta}_i^{(1), hp} \|_{L^2(\Omega)} +  \| \nabla \times  {\bm \theta}_j^{(1), hp} \|_{L^2(\Omega)} \right ), \label{errorreghpr}\\
\left | ({\mathcal I}^\varepsilon )_{ij} - ({\mathcal I}^{hp})_{ij} \right | \le & C  \exp(  - a  N_{dof}^\theta)   \left (
1+  \exp( -  a N_{dof}^\theta)  +
\|   {\bm \theta}_i^{(1), hp } \|_{L^2(B)} +  \|   {\bm \theta}_j^{(1),  hp } \|_{L^2(B)} \right ).  \label{errorreghpi}
\end{align}
\end{subequations}
with $C$ and $a$ additionally depending on $\nu$.
\end{lemma}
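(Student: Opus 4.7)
The proof strategy mirrors that of Lemma~\ref{lemma:errorduetoreg} almost verbatim, with the regularisation error $\varepsilon$ replaced by the $hp$-FEM error bound $\exp(-a N_{dof}^\theta)$ from (\ref{eqn:expconv}). The plan is to reuse the bilinear forms $n({\bm u},{\bm v}):=\frac{\alpha^3}{4}\int_\Omega \tilde{\mu}_r^{-1}\nabla\times{\bm u}\cdot\nabla\times\overline{\bm v}\dif{\bm \xi}$, $r({\bm u},{\bm v})=-n({\bm u},{\bm v})$, and $i({\bm u},{\bm v}):=\frac{\alpha^3}{4}\int_B \nu{\bm u}\cdot\overline{\bm v}\dif{\bm \xi}$ introduced in the proof of Lemma~\ref{lemma:errorduetoreg}, but now applied to the pair $(\tilde{\bm \theta}_i^{(0),\varepsilon},\tilde{\bm \theta}_i^{(0),hp})$ (respectively $({\bm \theta}_i^{(1),\varepsilon},{\bm \theta}_i^{(1),hp})$) instead of the pair involving the unregularised solution.

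For (\ref{errorreghpn0}) I would write
\begin{align*}
({\mathcal N}^{(0),\varepsilon})_{ij}-({\mathcal N}^{(0),hp})_{ij} =& n(\tilde{\bm \theta}_i^{(0),\varepsilon}-\tilde{\bm \theta}_i^{(0),hp},\tilde{\bm \theta}_j^{(0),\varepsilon}-\tilde{\bm \theta}_j^{(0),hp}) \\
&+n(\tilde{\bm \theta}_i^{(0),\varepsilon}-\tilde{\bm \theta}_i^{(0),hp},\tilde{\bm \theta}_j^{(0),hp}) + n(\tilde{\bm \theta}_i^{(0),hp},\tilde{\bm \theta}_j^{(0),\varepsilon}-\tilde{\bm \theta}_j^{(0),hp}),
\end{align*}
apply Cauchy--Schwarz, and then invoke (\ref{eqn:expconv}) to bound $\|\nabla\times(\tilde{\bm \theta}_i^{(0),\varepsilon}-\tilde{\bm \theta}_i^{(0),hp})\|_{L^2(\Omega)}\le\|\tilde{\bm \theta}_i^{(0),\varepsilon}-\tilde{\bm \theta}_i^{(0),hp}\|_{{\bm H}(\text{curl},\Omega)}\le k\exp(-a N_{dof}^\theta)$. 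Collecting constants (which now absorb $\varepsilon$ and $\alpha$) delivers (\ref{errorreghpn0}). The estimate (\ref{errorreghpr}) follows identically using $r(\cdot,\cdot)$ with the superscripts shifted from $(0)$ to $(1)$; since $r=-n$ no sign changes affect the bound.

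For (\ref{errorreghpi}), the main subtlety is that two equivalent expressions for $({\mathcal I}^\varepsilon)_{ij}$ are in play. I would adopt the $L^2(B)$ form $({\mathcal I}^\varepsilon)_{ij}=\frac{\alpha^3}{4}\int_B\nu({\bm \theta}_i^{(1),\varepsilon}+{\bm \theta}_i^{(0)})\cdot(\overline{{\bm \theta}_j^{(1),\varepsilon}}+{\bm \theta}_j^{(0)})\dif{\bm \xi}$ and its discrete analogue and then expand $({\mathcal I}^\varepsilon)_{ij}-({\mathcal I}^{hp})_{ij}$ into the same five cross terms used in Lemma~\ref{lemma:errorduetoreg}, with $\varepsilon$-solutions playing the role of the continuous solutions and $hp$-solutions the role of the regularised ones. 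Cauchy--Schwarz, together with the uniform bound $\|{\bm \theta}_j^{(0)}\|_{L^2(B)}\le C$ and (\ref{eqn:expconv}), yields (\ref{errorreghpi}) with the constant $C$ now carrying an additional factor of $\nu$ as stated.

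The main obstacle I anticipate is bookkeeping rather than ideas: one must ensure that the $hp$-approximation used for $\tilde{\bm \theta}_i^{(0),hp}$ in the source term of the discrete problem for ${\bm \theta}_i^{(1),hp}$ is consistent with the $\tilde{\bm \theta}_i^{(0),hp}$ used to form $({\mathcal N}^{(0),hp})_{ij}$, so that the same a-priori bound applies uniformly, and that the $\exp(-a N_{dof}^\theta)$ factors from the two subproblems can be absorbed into a single exponential at the cost of adjusting $a$. Beyond this, the argument is a direct transcription of the proof of Lemma~\ref{lemma:errorduetoreg}.
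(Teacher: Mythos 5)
Your proposal is correct and follows essentially the same route as the paper: the paper's proof explicitly states that it repeats the decomposition and Cauchy--Schwarz argument of Lemma~\ref{lemma:errorduetoreg} term-by-term, replacing the regularisation estimate $\| \cdot \|_{{\bm H}(\text{curl},\Omega)} \le C\varepsilon\nu$ with the $hp$-approximation bound $\| \cdot \|_{{\bm H}(\text{curl},\Omega)} \le k\exp(-a N_{Dof}^{\theta})$ from (\ref{eqn:expconv}), exactly as you describe.
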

\begin{proof}
To show (\ref{errorreghpn0}),   a similar approach to the proof of Lemma~\ref{lemma:errorduetoreg} is followed leading to
\begin{align}
\left |
({\mathcal N}^{(0),\varepsilon})_{ij} - ({\mathcal N}^{(0),hp})_{ij} 
\right | \le &  C \alpha^3 \left (  \| \nabla \times (\tilde{\bm \theta}_i^{(0),\varepsilon }- \tilde{\bm \theta}_i^{(0), hp }) \|_{L^2 (\Omega)}  \| \nabla \times (\tilde{\bm \theta}_j^{(0), \varepsilon }- \tilde{\bm \theta}_j^{(0), hp }) \|_{L^2 (\Omega)} 
\right . \nonumber \\
 & + \left .
 \| \nabla \times (\tilde{\bm \theta}_i^{(0) , \varepsilon}- {\bm \theta}_i^{(0), hp }) \|_{L^2 (\Omega)}
 \| \nabla \times \tilde{\bm \theta}_j^{(0) , hp}\|_{L^2 (\Omega)} + \right . \nonumber\\
 & + \left .
 \| \nabla \times (\tilde{\bm \theta}_j^{(0) ,\varepsilon}- {\bm \theta}_j^{(0), hp }) \|_{L^2 (\Omega)}
 \| \nabla \times \tilde{\bm \theta}_i^{(0) , hp }\|_{L^2 (\Omega)}
\right ) .
\end{align}     
Then, since $\| \nabla \times (\tilde{\bm \theta}_i^{(0), \varepsilon }- \tilde{\bm \theta}_i^{(0),hp }) \|_{L^2 (\Omega)} \le \| \tilde{\bm \theta}_i^{(0) , \varepsilon}- \tilde{\bm \theta}_i^{(0), hp }  \|_{{\bm H} (\text{curl},\Omega)} \le  \frac{k(\varepsilon ,\nu,\alpha ) }{\exp( a(\varepsilon,\nu,\alpha) N_{Dof}^{\theta})}$ the result in (\ref{errorreghpn0}) immediately follows. The result for (\ref{errorreghpr}) is obtained in a similar way.

To show (\ref{errorreghpi}),  then  by following a similar approach to the proof of Lemma~\ref{lemma:errorduetoreg}, it can be shown that
\begin{align}
\left |
({\mathcal I}^\varepsilon)_{ij} - ({\mathcal I}^{hp})_{ij} 
\right | \le &   C \alpha^3 \nu \left (
\| {\bm \theta}_i^{(1) , \varepsilon} - {\bm \theta}_i^{(1) , hp} \|_{L^2(B)} \| {\bm \theta}_j^{(1) , \varepsilon } - {\bm \theta}_j^{(1) , hp } \|_{L^2(B)}  +
\| {\bm \theta}_i^{(1), , \varepsilon} - {\bm \theta}_i^{(1) ,hp} \|_{L^2(B) } \right . \nonumber \\
 &  \qquad +\| {\bm \theta}_j^{(1),  \varepsilon} - {\bm \theta}_j^{(1) ,hp } \|_{L^2(B) }
 +  \| {\bm \theta}_i^{(1),  \varepsilon} - {\bm \theta}_i^{(1),hp } \|_{L^2(B)}  \|  {\bm \theta}_j^{(1),hp } \|_{L^2(B)}\nonumber\\
 & \left . \qquad 
+ \| {\bm \theta}_j^{(1),  \varepsilon} - {\bm \theta}_j^{(1),hp } \|_{L^2(B)}  \|  {\bm \theta}_i^{(1) ,hp} \|_{L^2(B)} 
 \right ) \nonumber .
\end{align}     
and, since $\|  {\bm \theta}_i^{(1), \varepsilon }- {\bm \theta}_i^{(1),hp } \|_{L^2 (\Omega)} \le \| {\bm \theta}_i^{(1) , \varepsilon}- {\bm \theta}_i^{(1), hp }  \|_{{\bm H} (\text{curl},\Omega)} \le  \frac{k(\varepsilon ,\nu,\alpha ) }{\exp( a(\varepsilon,\nu,\alpha) N_{Dof}^{\theta})}$, the result in (\ref{errorreghpi}) immediately follows.

\end{proof}
The estimates in Lemma~\ref{lemma:errorhpfem} indicate that, for the correct combination of $h$-- and $p$--refinements, the discrete regularised tensor coefficients will converge exponentially fast to the regularised continuous counterpart. The following result combines both the effects of regularisation and discretisation.

\begin{theorem} \label{thm:errohpfemwithregtocont}
{Given  weak $hp$ finite element approximations ${  \tilde{\bm \theta}_i^{(0), hp }} \in X^{hp} \cap  X^\varepsilon $ to $\tilde{\bm \theta}_i^{(0), \varepsilon } \in  X^\varepsilon$ and ${  {\bm \theta}_i^{(1), hp }} \in   X^{hp} \cap  X^\varepsilon$ to ${\bm \theta}_i^{(1), \varepsilon } \in  X^\varepsilon$, which in turn approximate   $\tilde{\bm \theta}_i^{(0)}\in X$ and ${\bm \theta}_i^{(1)} \in X$,  respectively,  the error in the discrete approximation of the tensor coefficients can be estimated as}
\begin{subequations}
\begin{align}
\left | ({\mathcal N}^{(0)})_{ij} - ({\mathcal N}^{(0),hp} )_{ij} \right | \le & C_1 \varepsilon \nu \alpha^3   \left ( \varepsilon \nu +  \| \nabla \times  \tilde{\bm \theta}_i^{(0), \varepsilon} \|_{L^2(\Omega)} +  \| \nabla \times  \tilde{\bm \theta}_j^{(0), \varepsilon} \|_{L^2(\Omega)} \right ) \nonumber \\
& +C_2    \exp( - a  N_{dof}^\theta)  \left (  \exp(  -a N_{dof}^\theta)  +  \| \nabla \times  \tilde{\bm \theta}_i^{(0), hp} \|_{L^2(\Omega)} +  \| \nabla \times  \tilde{\bm \theta}_j^{(0), hp} \|_{L^2(\Omega)} \right ), 
\label{errorreghpfulln0} 
\end{align}
\end{subequations}
where $C_2$ and $a$ depend on $\varepsilon$ and $\alpha$ and
\begin{subequations}
\begin{align}
\left | ({\mathcal R})_{ij} - ({\mathcal R}^{hp} )_{ij} \right | \le & C_1 \varepsilon \nu \alpha^3   \left ( \varepsilon \nu +  \| \nabla \times  {\bm \theta}_i^{(1), \varepsilon} \|_{L^2(\Omega)} +  \| \nabla \times  {\bm \theta}_j^{(1), \varepsilon} \|_{L^2(\Omega)} \right )
\nonumber \\
& + C_2   \exp(  -a  N_{dof}^\theta)   \left (  \exp( - a  N_{dof}^\theta)  +  \| \nabla \times  {\bm \theta}_i^{(1), hp} \|_{L^2(\Omega)} +  \| \nabla \times  {\bm \theta}_j^{(1), hp} \|_{L^2(\Omega)} \right ), \label{errorreghpfullr}\\
\left | ({\mathcal I} )_{ij} - ({\mathcal I}^{hp})_{ij} \right | \le &C_1 \varepsilon \nu^2 \alpha^3   \left ( 1+ \varepsilon \nu +  \|   {\bm \theta}_i^{(1), \varepsilon} \|_{L^2(B)} +  \|   {\bm \theta}_j^{(1), \varepsilon} \|_{L^2(B)} \right )\nonumber \\
& + C_2     \exp( - a N_{dof}^\theta)  \left (
1+   \exp(  - a  N_{dof}^\theta)  +
\|   {\bm \theta}_i^{(1), hp } \|_{L^2(B)} +  \|   {\bm \theta}_j^{(1),  hp } \|_{L^2(B)} \right ),  \label{errorreghpfulli}
\end{align}
\end{subequations}
with  $C_2$ and $a$ additionally depending on $\nu$.
\end{theorem}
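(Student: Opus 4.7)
The plan is to obtain each bound by a triangle inequality that splits the total error into the regularisation error (bounded in Lemma~\ref{lemma:errorduetoreg}) and the discretisation error (bounded in Lemma~\ref{lemma:errorhpfem}). Concretely, for each of the tensor components I would write, for example,
\begin{align*}
\bigl| (\mathcal{N}^{(0)})_{ij} - (\mathcal{N}^{(0),hp})_{ij} \bigr|
\le \bigl| (\mathcal{N}^{(0)})_{ij} - (\mathcal{N}^{(0),\varepsilon})_{ij} \bigr|
+ \bigl| (\mathcal{N}^{(0),\varepsilon})_{ij} - (\mathcal{N}^{(0),hp})_{ij} \bigr|,
\end{align*}
and analogously for $(\mathcal{R})_{ij}-(\mathcal{R}^{hp})_{ij}$ and $(\mathcal{I})_{ij}-(\mathcal{I}^{hp})_{ij}$. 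Each of the two terms on the right-hand side has already been estimated in the two preceding lemmas, so the result then follows by direct substitution of (\ref{errorregno})--(\ref{errorregi}) and (\ref{errorreghpn0})--(\ref{errorreghpi}), with the identification $C_1$ for the constant from Lemma~\ref{lemma:errorduetoreg} and $C_2$ (together with the exponential rate $a$, which is allowed to depend on $\varepsilon$, $\alpha$ and, in the $\mathcal{R}$ and $\mathcal{I}$ cases, also on $\nu$) for the constant from Lemma~\ref{lemma:errorhpfem}.

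The second step is cosmetic but worth doing carefully: the bounds in Lemma~\ref{lemma:errorduetoreg} as stated involve $\|\nabla\times\tilde{\bm\theta}_n^{(0),\varepsilon}\|_{L^2(\Omega)}$ and $\|{\bm\theta}_n^{(1),\varepsilon}\|_{L^2(B)}$, which matches the form written in (\ref{errorreghpfulln0})--(\ref{errorreghpfulli}), so no further bounding is needed. Similarly the bounds in Lemma~\ref{lemma:errorhpfem} are stated in terms of the discrete iterates $\tilde{\bm\theta}_n^{(0),hp}$ and ${\bm\theta}_n^{(1),hp}$, which already appear in the theorem statement. Hence the two contributions combine additively and one just reads off the two lines of each estimate.

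I expect no real obstacle here since the theorem is essentially a bookkeeping combination of the two previous lemmas via the triangle inequality. The only mildly delicate point is that the constants in Lemma~\ref{lemma:errorhpfem} depend (through $k(\varepsilon,\nu,\alpha)$ and $a(\varepsilon,\nu,\alpha)$) on the problem parameters, so one must be explicit in the statement that $C_2$ and $a$ inherit that dependence. It is also worth remarking that the right-hand sides contain norms of both the continuous regularised solutions (through $C_1$) and the discrete solutions (through $C_2$), which is the natural form for the estimate since in practice only the latter are computable, while the former can be controlled using stability bounds for the regularised variational problem; this is the basis for the computable a-posteriori error indicator developed in Appendix~\ref{sect:appendix}.
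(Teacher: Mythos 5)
Your proposal is correct and matches the paper's proof exactly: the paper also splits $|({\mathcal N}^{(0)})_{ij}-({\mathcal N}^{(0),hp})_{ij}|$ (and analogously for ${\mathcal R}$ and ${\mathcal I}$) via the triangle inequality into the regularisation error plus the discretisation error, then invokes Lemmas~\ref{lemma:errorduetoreg} and~\ref{lemma:errorhpfem}. One small observation worth flagging: tracing your argument through for the ${\mathcal N}^{(0)}$ case actually yields the factor $C_1\varepsilon\alpha^3(\varepsilon + \ldots)$ from (\ref{errorregno}), \emph{without} the extra $\nu$ that appears in the stated bound (\ref{errorreghpfulln0}); the extra $\nu$ there looks like a transcription slip from the ${\mathcal R}$ line, and your (implicitly sharper) version is the one the proof actually delivers.
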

\begin{proof}
The result in (\ref{errorreghpfulln0}) is obtained by using
\begin{align}
|  ({\mathcal N}^{(0)}
)_{ij} - ({\mathcal N}^{(0),hp})_{ij}| = &
|  ( {\mathcal N}^{(0)} )_{ij} - ({\mathcal N}^{(0),\varepsilon} )_{ij}  + 
({\mathcal N}^{(0),\varepsilon} )_{ij}
-({\mathcal N}^{(0),hp})_{ij} |  \nonumber \\
 \le & |  ({\mathcal N}^{(0)})_{ij} - ({\mathcal N}^{(0),\varepsilon} )_{ij}  | + | 
({\mathcal N}^{(0),\varepsilon} )_{ij}
-({\mathcal N}^{(0),hp})_{ij}|, \end{align}
which follows by the triangular inequality, and applying Lemmas~\ref{lemma:errorduetoreg} and~\ref{lemma:errorhpfem}. The other two cases are obtained analogously.
\end{proof}
As expected, this shows that the error is bounded by the sum of the error associated with the regularisation and the discretisation. For small $\varepsilon$, it will typically be the discretisation error that will dominate, which can be controlled by an appropriate choice of $h$ and $p$ leading to exponential convergence, as reported in the above estimates. These estimates could  be further improved by also considering the effects due to the iterative solution of the linear system in the finite element approximation~\cite{vanloon}, but we will assume that such effects are smaller than the discretisation errors. The use of reduced order models~\cite{hesthaven2016} to compute the spectral signature~\cite{ben2020,Elgy2024_preprint} will also introduce further errors, which can be controlled by tolerances in the model order reduction and careful choice of snapshot solution parameters, but these will not be considered further in this work.

If desired, the results of Theorem~\ref{thm:errohpfemwithregtocont} could be used to obtained a-prori estimates of $\left \| {\mathcal Z} - {\mathcal Z}^{hp}  \right \|$, \\$\left \| {\mathcal Z}^{(0)} - {\mathcal Z}^{(0),hp}  \right \|$ and
$\left \| \tilde{\mathcal Z} - \tilde{\mathcal Z}^{ hp}  \right \|$ where 
\begin{align}
{\mathcal Z}^{hp} (\nu) := & [ {\mathcal N} ^{(0),hp}, {\mathcal I}^{hp} (\nu) ], \qquad 
{\mathcal Z}^{(0),hp} (\nu) :=  [ {\mathcal N} ^{(0),hp}, {\mathcal I}^{hp} (\nu) ],\qquad
\tilde{\mathcal Z}^{hp} : =  [ \tilde{\mathcal R}^{hp} (\nu)  , {\mathcal I}^{hp} (\nu)],
\end{align}
in similar way to the estimates $\left \| {\mathcal Z} - {\mathcal Z}^\varepsilon  \right \|$, $\left \| {\mathcal Z}^{(0)} - {\mathcal Z}^{(0),\varepsilon}  \right \|$ and
$\left \| \tilde{\mathcal Z} - \tilde{\mathcal Z}^\varepsilon  \right \|$ in Section~\ref{sect:errorscontproblem}. In  Appendix~\ref{sect:appendix}, a constant free explicit  error indicator is proposed, which will be used to indicate the errors associated with regularisation in the next section.

\section{Numerical results} \label{sect:results}

In this section, a selection of numerical examples will be considered to demonstrate the results of the previous sections.  Firstly, an MPT characterisation of a simple irregular tetrahedron with six independent coefficients is considered. A discretisation sufficient to capture the converged solution will be used to demonstrate that the presence of non-vanishing commutators is not due to discretisation and regularisation errors. This example will also be used to motivate how the behaviour of the distance measures as a function of frequency is closely related to the behaviour of the eigenvalues of the MPT spectral signature. Next, an example of a toy gun model is considered with a barrel made of a highly magnetic material. Again, the interplay between the distance measures as a function of frequency and the behaviour of the eigenvalues of the MPT spectral signature is  considered. As a final application, we consider how the distance measures can provide additional feature information for a  object  classification problem using two Bayesian machine learning approaches.

\subsection{Homogeneous irregular polyhedron} \label{sect:adjtet}
We consider an irregular polyhedron $B=B_1 \cup B_2$ with $B_1$ being an irregular tetrahedron with vertices ${\bm v}_1  = ( 0,0,0)$, ${\bm v}_2 = ( 7, 0 , 0)$, ${\bm v}_3 = ( 5.5, 4.6,  0)$ and ${\bm v}_4 =(3.3 , 2 , 5)$
and $B_2$ the irregular tetrahedron with vertices ${\bm v}_1 = ( 0 , 0 , 0) $, ${\bm v}_2 = ( 7 , 0 , 0)$ ,$ {\bm v}_3 = ( 5.5 , -3.0,  0)$, $ {\bm v}_4 = ( 3.3 , 2 ,  5)$
with $B_1\cap B_2 \ne \emptyset$ and $| B_1| > | B_2 |$. The polyhedron  is shown in Figure~\ref{fig:twotetra_mesh}. The object is chosen to have  $\alpha =0.001 $m and homogeneous materials $\mu_r = 32$ and $\sigma_*=1\times 10^7$ S/m. 
The object $B$ is placed centrally in the a domain with bounding box  $[-100,100]^3$. The domain $\Omega$ is then discretised by a mesh of 18\,413 unstructured tetrahedra to which $L=3$ layers of prismatic elements following the "geometric increasing" strategy~\cite{Elgy2024_preprint} are included on $\Gamma$  resulting in 6461 prisms. The regularisation parameter is chosen as $\varepsilon = 1 \times 10^{-10}$, which is also used for the subsequent examples unless otherwise stated.

\begin{figure}[H]
\centering
\includegraphics[width=0.25\textwidth]{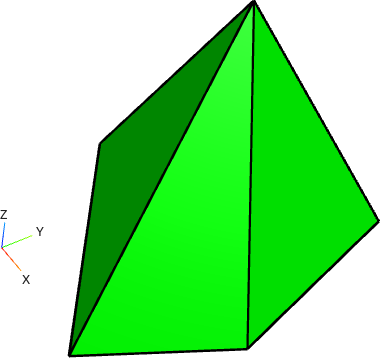}
\caption{Irregular polyhedron: Illustration of the geometry.}
\label{fig:twotetra_mesh}
\end{figure} 

The irregular nature of $B$, and its lack of rotational and reflectional symmetries, means that the exact ${\mathcal M}$ will have six independent coefficients.  In order to obtain converged results for the off-diagonal coefficients of $\tilde{\mathcal{R}}^{hp}$ and ${\mathcal{I}}^{hp}$ for $10^1 \le \omega \le 10^8$ rad/s it was found necessary to use order $p=3$ elements on the aforementioned mesh.

The converged eigenvalue spectral signatures $\lambda_i(\tilde{\mathcal R})$ and $\lambda_i({\mathcal I})$ are shown in Figure~\ref{fig:eigenvalues_two_tetra}, where we observe that the crossing of the eigenvalue curves for $\lambda_i({\mathcal I})$ at approximately $\omega =0.62 \times 10^7$ rad/s. {For simplicity of notation we have dropped the superscript $hp$ here and subsequently on the tensors. Importantly, here and in the following, we obtain the MPT spectral signature at discrete frequencies and  exclude those frequencies where the eigenvalues  of $\tilde{\mathcal R}$,   ${\mathcal R}$ and  ${\mathcal I}$ are repeated, but include situations where they may become close. Thus, the metrics and approximate distance measures from our new semi-metrics in Section~\ref{sect:metrics} can be applied.}

\begin{figure}[H]
\centering
$\begin{array}{c c}
\includegraphics[width=0.5\textwidth]{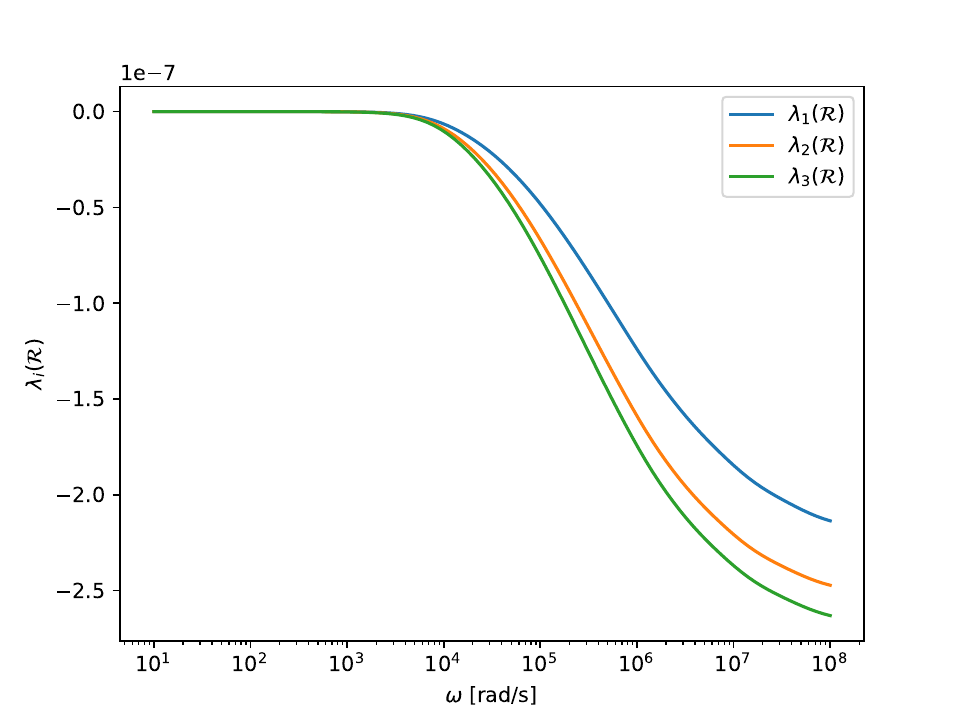} &
\includegraphics[width=0.5\textwidth]{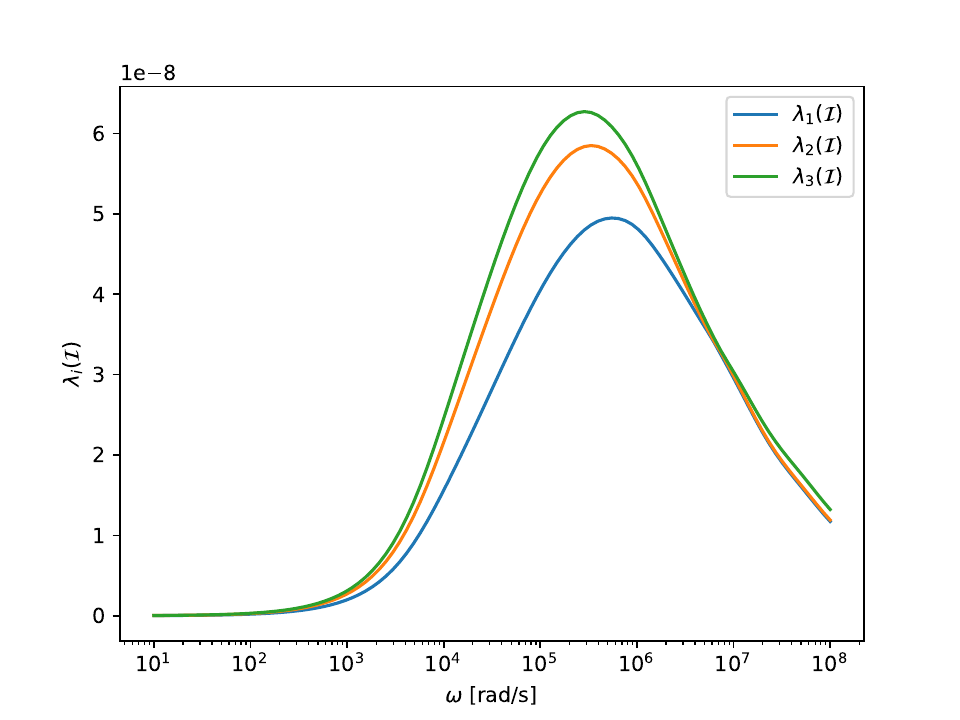}  \\
(a) & (b)
\end{array}$\\
$\begin{array}{c}
\includegraphics[width=0.5\textwidth]{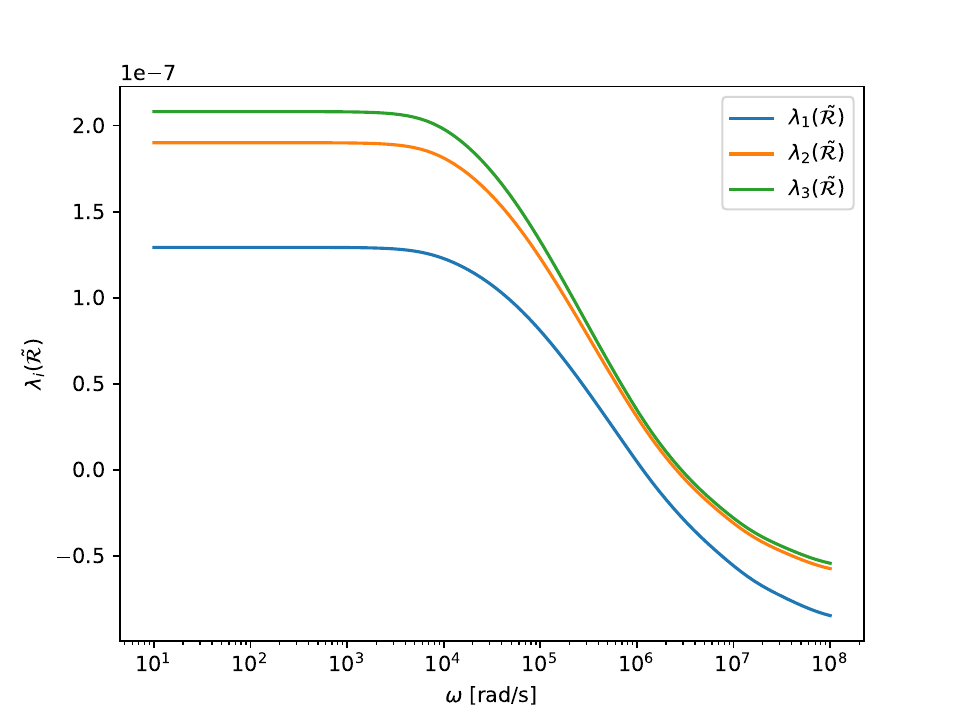} \\
(c)
\end{array}$
\caption{Homogeneous irregular polyhedron: Converged eigenvalue spectral signatures  $(a)$
$\lambda_i( {\mathcal{R}})$, $(b)$    $\lambda_i( \mathcal{I} )$
 and $(c)$ $\lambda_i( \tilde{\mathcal{R}} )$.}
\label{fig:eigenvalues_two_tetra}
\end{figure}

Furthermore, Figure~\ref{fig:comm_bounds_twotetra} shows that the computed {$\| {\mathcal Z} \| $, $ \| {\mathcal Z}^{(0) }\|$  and $ \| \tilde{\mathcal Z} \|$
}are significantly larger than the estimates 
$ \left \|  {\mathcal Z} - {\mathcal Z}^\varepsilon  \right \|\approx \Delta ( \| {\mathcal Z}\| ) $, $\left \| {\mathcal Z}^{(0)} - {\mathcal Z}^{(0),\varepsilon}  \right \| \approx \Delta ( \| {\mathcal Z}^{(0)} \| )$, and $ \left \| \tilde{\mathcal Z} - \tilde{\mathcal Z}^\varepsilon \right \| \approx \Delta ( \| \tilde{\mathcal Z}\| )$, {obtained by the error indicator in Appendix~\ref{sect:appendix}. Specifically, } (\ref{eqn:indiccommerr}), (\ref{eqn:indiccommerr0}) and (\ref{eqn:indiccommerrtil}) are applied for a discretisation with $p=3$ elements, which indicates that non-zero commutators are not just artefact caused by the regularisation and numerical discretisation and may contain additional object characterisation information.

\begin{figure}[H]
\centering
$\begin{array}{c}
\begin{array}{c c}
\includegraphics[width=0.45\textwidth]{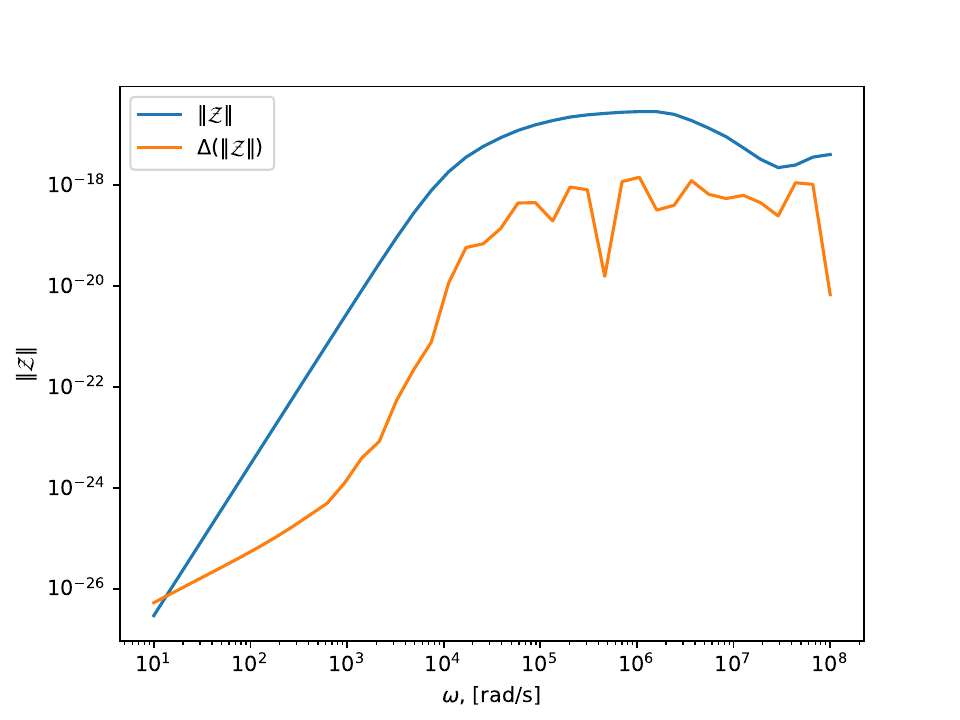} &
\includegraphics[width=0.45\textwidth]{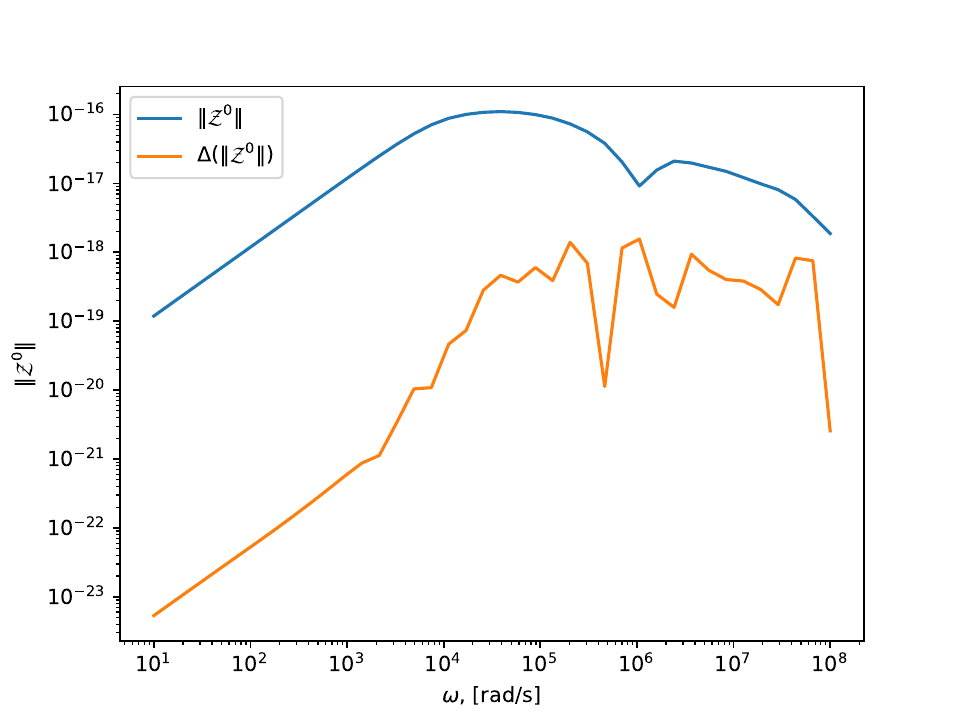} \\
(a) & (b)
\end{array}\\
\includegraphics[width=0.45\textwidth]{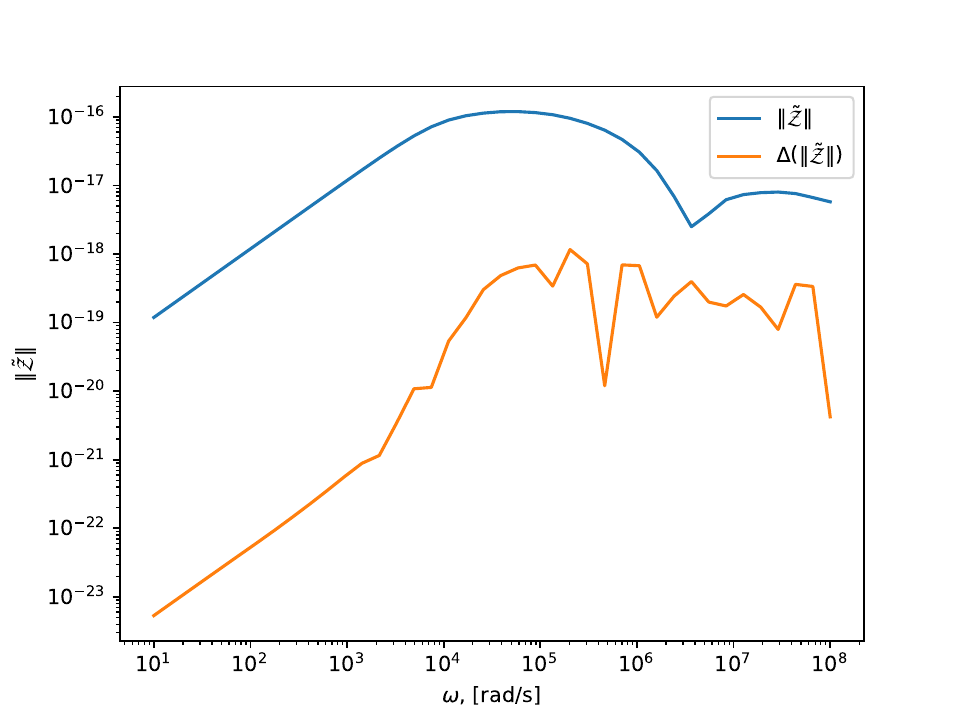}\\
(c)
\end{array}
$
\caption{Homogeneous irregular polyhedron:{ $(a)$  $\| {\mathcal Z} \|$,  $(b)$  $\| {\mathcal Z}^{(0)} \|$ and $(c)$ $\|\tilde{\mathcal Z} \|$} and the corresponding error indicators $(a)$ 
$ \left \|  {\mathcal Z} - {\mathcal Z}^\varepsilon  \right \|\approx \Delta ( \| {\mathcal Z}\| ) $, $(b)$ $\left \| {\mathcal Z}^{(0)} - {\mathcal Z}^{(0),\varepsilon}  \right \| \approx \Delta ( \| {\mathcal Z}^{(0)} \| )$, and $(c)$ $ \left \| \tilde{\mathcal Z} - \tilde{\mathcal Z}^\varepsilon \right \| \approx \Delta ( \| \tilde{\mathcal Z}\| )$.}
\label{fig:comm_bounds_twotetra}
\end{figure}

 \begin{figure}[H]
\centering
$\begin{array}{c c}
\includegraphics[width=0.45\textwidth]{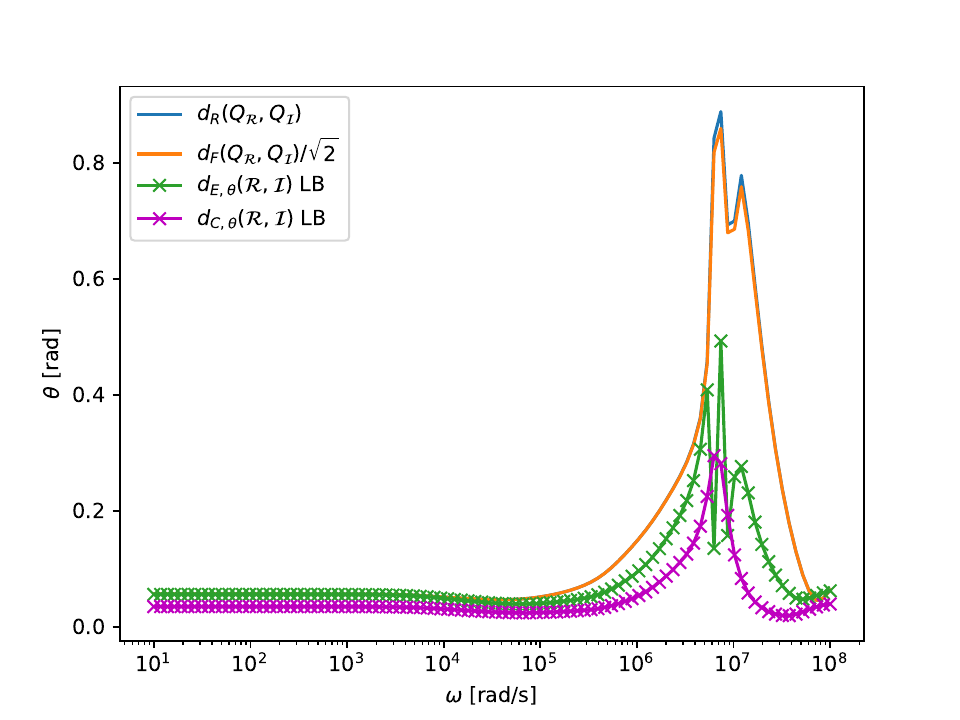} &
\includegraphics[width=0.45\textwidth]{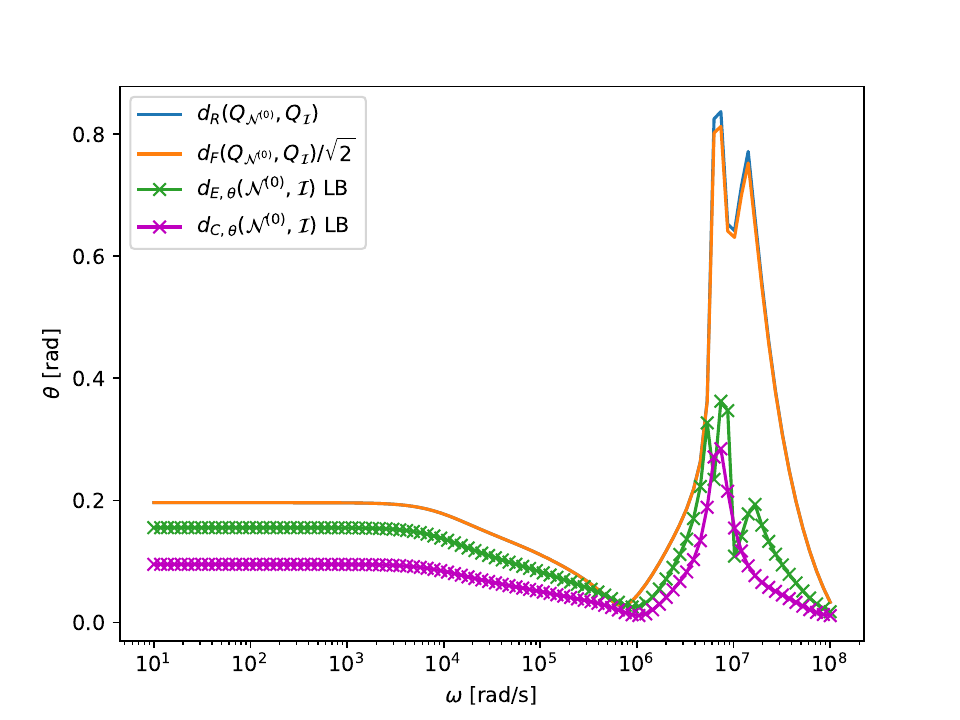}\\
(a) & (b) \end{array}$\\
$\begin{array}{c}
\includegraphics[width=0.45\textwidth]{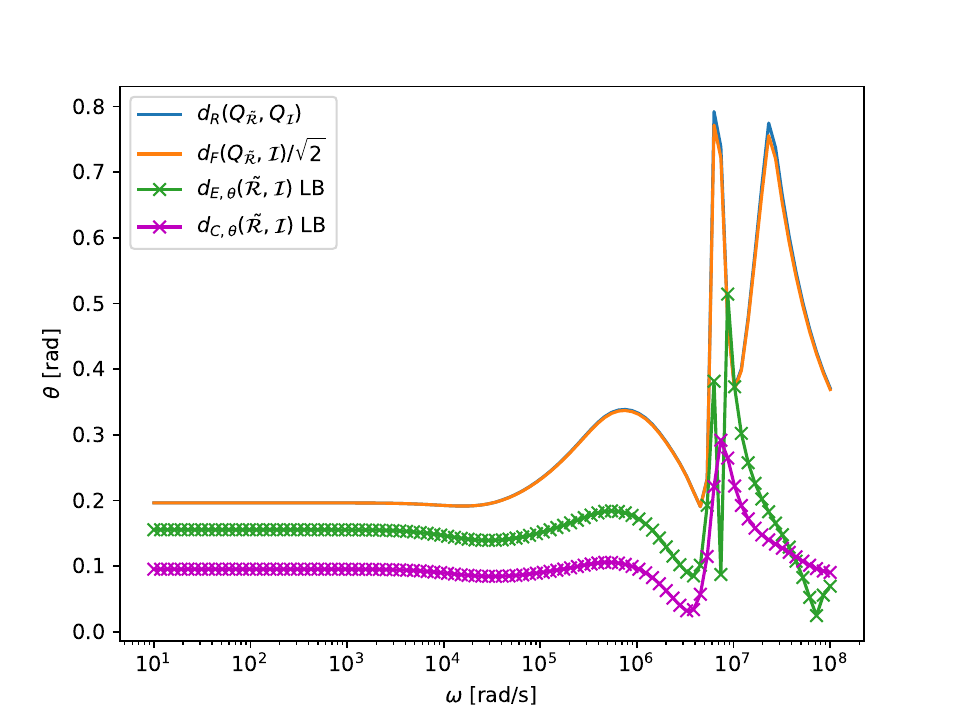} \\
(c)
\end{array}$
\caption{{Homogeneous irregular polyhedron: metric and approximate angle spectral signatures $(a)$  $d_R(Q_{\mathcal R},Q_{\mathcal I})$, $d_F(Q_{\mathcal R},Q_{\mathcal I})/\sqrt{2}$, 
$d_{E,\theta}({\mathcal R},{\mathcal I})$  and $d_{C,\theta}({\mathcal R},{\mathcal I})$,
 $(b)$ $d_R(Q_{{\mathcal N}^{(0)}},Q_{\mathcal I})$,
$d_F(Q_{{\mathcal N}^{(0)}} ,Q_{\mathcal I})$, $d_{E,\theta}({{\mathcal N}^{(0)}} ,{\mathcal I})$ and $d_{C,\theta}({{\mathcal N}^{(0)} },{\mathcal I})$, $(c) $  $d_R(Q_{\tilde{\mathcal R}},Q_{\mathcal I})$,
$d_F(Q_{\tilde{\mathcal R}},Q_{\mathcal I})$, $d_{E,\theta}(\tilde{\mathcal R} ,{\mathcal I})$ and $d_{C,\theta}(\tilde{\mathcal R} ,{\mathcal I})$.}}
\label{fig:estimated_metric_homo_TwoTetra}
\end{figure}

Having established that  variations in the computed $\| {\mathcal Z} \| $,  $ \| {\mathcal Z}^{(0)}\|$ and $ \| \tilde{\mathcal Z} \|$ are not purely due to effects or regularisation or discretisation,
Figure~\ref{fig:estimated_metric_homo_TwoTetra}~$(a)$ shows that the spectral signatures of the distances  { measured using the metrics $d_R  (Q_{\mathcal R},Q_{\mathcal I})$, $d_F(Q_{\mathcal R},Q_{\mathcal I}) /\sqrt{2}$  and the spectral signatures of the approximate distances  $d_{E,\theta} ({\mathcal R},{\mathcal I})$
and  $d_{C,\theta} ({\mathcal R},{\mathcal I})$  at different frequencies (excluding $\omega= 6.2 \times 10^6$ rad/s), which predict  non-zero distances between $Q_{\mathcal R}$ and $Q_{\mathcal I}$. Figures~\ref{fig:estimated_metric_homo_TwoTetra}~$(b)$ and ~\ref{fig:estimated_metric_homo_TwoTetra}~$(c)$ consider the spectral signatures of the same metrics for the pairs $(Q_{\mathcal N}^{(0)},Q_{\mathcal I})$  and $(Q_{\tilde{\mathcal R}},Q_{\mathcal I})$ and the corresponding approximate distances for the pairs
$({\mathcal N}^{(0)},{\mathcal I})$ and $({\mathcal R},{\mathcal I})$. These}
similarly predict non-zero distances between $Q_{{\mathcal N}^{(0) }}$ and $Q_{\mathcal I}$, and $Q_{\tilde{\mathcal R}}$ and $Q_{\mathcal I}$, respectively.
Each exhibit peaks in the predicted angles close to $\omega= 6.2 \times 10^6$ rad/s with the peaks over a broader  frequency range for  the $d_R$ and $d_F/\sqrt{2}$ metrics while those of $d_{E,\theta}$ and  $d_{C,\theta}$ are typically sharper.
 As already indicated in Figure~\ref{fig:eigenvalues_two_tetra}, the curves of $\lambda_i({\mathcal I})$ cross each other at  $\omega= 6.2 \times 10^6$ rad/s  and remain close for frequencies in this vicinity.

 {As motivated in Section~\ref{sect:basicexamples}, there will be issues in the numerical computation of $Q_{\mathcal R}$, $Q_{\tilde{\mathcal R}}$,  $Q_{\mathcal I}$ and $Q_{{\mathcal N}^0}$ when the corresponding eigenvalues become close and this  will impact on the accuracy of the predictions made by $d_R$ and $d_F$.  The earlier examples illustrated how inaccurate eigenvalue computations can lead to predictions of angles by $d_R$ and $d_F$ that are far from the smallest distance measure.  
 While it is again possible to use the  cross product approach to compute $Q_{\mathcal R}$, $Q_{\tilde{\mathcal R}}$,  $Q_{\mathcal I}$ and $Q_{{\mathcal N}^0}$  (instead of \texttt{numpy.linalg.eigh}) this comes with the  challenge of choosing suitable tolerances to switch between approaches once the eigenvalues become close. Instead, we prefer to use $d_{E,\theta}$ and  $d_{C,\theta}$, which do not experience these challenges and perform better when the eigenvalues become close.  
  }
 
  To further explain the behaviour of the predicted angles, the ellipsoids corresponding to $-x^T {\mathcal R} x=c_{\mathcal R}$ and $x^T {\mathcal I}x =c_{\mathcal I}$ at $\omega =22$  and $ 6.2\times 10^6$ rad/s, where $c_{\mathcal R}, c_{\mathcal I}$ are chosen such that the major axis of each of the ellipsoids is of unit length for each frequency,  are shown in Figure~\ref{fig:ellipsoids_two_tetra}. As well as showing both ellipsoids being superimposed on the same plot, the
  axis of rotation ${\vec k}$ (obtained from the non-zero coefficients of $K$) for the rotation angle is also shown. {For $\omega=22$ rad/s and the pair $( Q_{\mathcal R} ,Q_{\mathcal I} ) $, $d_R$ and $d_F/\sqrt{2}$ are small and  this implies  a small rotation about the indicated axis ${\bm k}$, similarly  $d_{E,\theta}$ and  $d_{C,\theta}$ for the pair $( {\mathcal R} ,{\mathcal I} ) $ are also small.
For   $\omega=6.2\times 10^6\pm \epsilon$ rad/s and small $\epsilon$, the imaginary eigenvalues are distinct, but close, and so the ellipsoid representing  $x^T {\mathcal I}x =c_{\mathcal I}$ reduces to being close to a spheroid. In the limiting case, when   the representation of $x^T {\mathcal I}x =c_{\mathcal I}$ is a spheroid, it }can be rotated through any angle about the axis ${\vec k}$ without changing the configuration relative to the ellipsoidal representation of $-x^T {\mathcal R}x =c_{\mathcal R}$. {The smallest angle, and hence the smallest distance measure (using  $d_R $ , $d_F/\sqrt{2}$, $d_{E,\theta}$ for the pair $( Q_{\mathcal R} ,Q_{\mathcal I} )$ and  $d_{C,\theta}$ for the pair $( {\mathcal R} ,{\mathcal I} )$),
{  should be close to zero for frequencies close to  $\omega=6.2\times 10^6  $ rad/s.} 
Furthermore, as equality of  the eigenvalues is approached, it is possible that the minimum distance  may increase, then dip to zero at eigenvalue equality and increasing again, which is what the results of Figure~\ref{fig:estimated_metric_homo_TwoTetra} with  $d_{R,\theta}$ and $d_{C,\theta}$ shows.}
  
  \begin{figure}[H]
\centering
$\begin{array}{cc}
\includegraphics[width=0.4\textwidth]{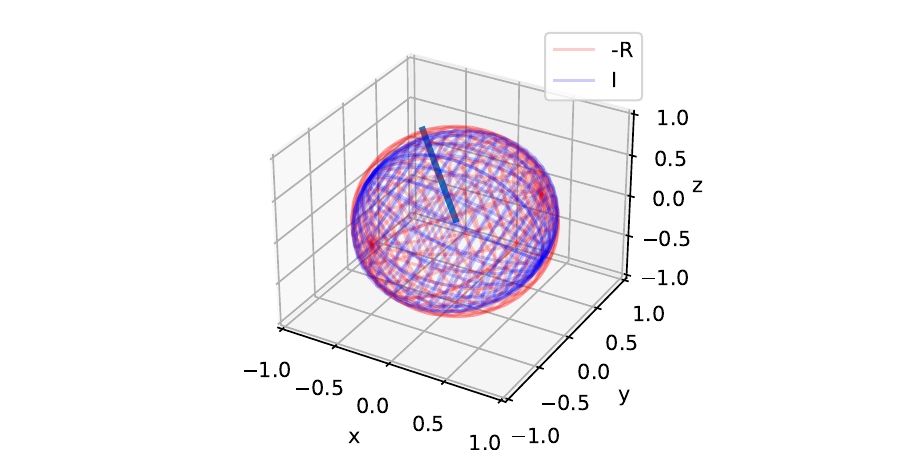} &
\includegraphics[width=0.4\textwidth]{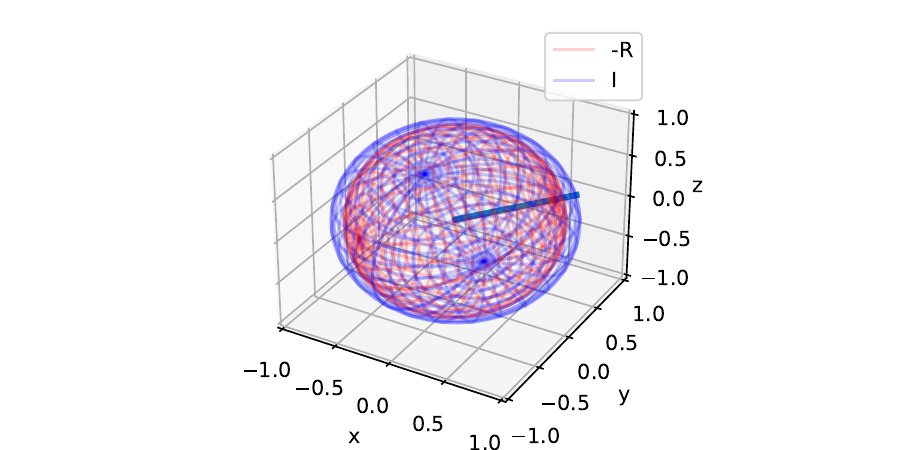}\\
(a) & (b)
\end{array}$
\caption{Homogeneous irregular polyhedron: Ellipsoidal representations of  $-x^T {\mathcal R} x=c_{\mathcal R}$ and $x^T {\mathcal I}x =c_{\mathcal I}$ at $\omega =22$ and $ 6.2 \times 10^6 $ rad/s  with $c_{\mathcal R}, c_{\mathcal I}$ chosen such that the major axis  is of unit length for both ellipsoids and each frequency.
}
\label{fig:ellipsoids_two_tetra}
  \end{figure}

\subsection{Toy gun} \label{sect:toygun}

A simplified model of a toy gun is considered as shown in Figure~\ref{fig:toygungeom}. The barrel is hollow with a cap at one end.  The length of the barrel is 0.2 m, outer and inner radii of the barrel are 0.02 m and 0.01m, respectively. The box representing the receiver is 0.08 m $\times$ 0.01 m $\times$ 0.15 m. The barrel is chosen to have  $\sigma_* = 1.45\times 10^6$ S/m and  $\mu_r=20, 100 $ is considered in turn  (note that  $\sigma_* = 1.45\times 10^6$ S/m, $\mu_r=100$ corresponds to carbon steel) and the receiver has $4.5 \times 10^6$ S/m and $\mu_r=1$, which represents a stainless steel material.  A mesh of 21,132 unstructured tetrahedra, 5,525 prisms is generated and on this mesh elements of order $p=0,1,\ldots,5$ are uniformly increased until mesh convergence. The prisms are chosen to be used as boundary layers in the barrel and the thickness of the prismatic layer is varied according to the skin depth for the barrel and a target frequency of $1 \times 10^6$ rad/s~\cite{Elgy2024_preprint}.

\begin{figure}[h]
\begin{center}
\includegraphics[width=0.25\textwidth]{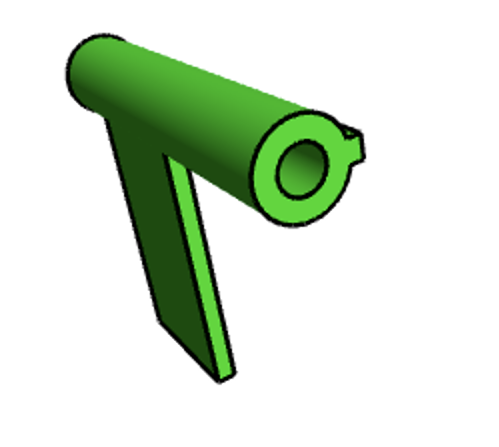}
\end{center}
\caption{Illustration of the toy gun: Barrel has $\sigma_* = 1.45\times 10^6$ S/m and $\mu_r=20,100$ and the box representing the receiver is a non-magnetic stainless steel with $4.5 \times 10^6$ S/m and $\mu_r=1$.} \label{fig:toygungeom}
\end{figure}

\subsubsection{Case $\mu_r=20$}
Figure~\ref{fig:toygunmur20} shows the converged eigenvalue spectral signatures $\lambda_i({\mathcal R})$, $\lambda_i({\mathcal I})$ and $\lambda_i(\tilde{\mathcal R})$ for the toy gun example when the barrel has a relative magnetic permeability of $\mu_r=20$. Notice that $\lambda_2({\mathcal I})= \lambda_3({\mathcal I})$ at $\omega=3.7 \times 10^3$ rad/s,  $\lambda_1({\mathcal I})=\lambda_2({\mathcal I})$ at $\omega=5.8 \times10^4$ rad/s
while $\lambda_1(\tilde{\mathcal R}) = \lambda_2(\tilde{\mathcal R})$ at $\omega=1.5\times10^3$ rad/s, { which are subsequently excluded from the MPT spectral signature.}
 Figure~\ref{fig:toygunmur20metrics}~$(a)$ shows the  {spectral signature of the distance measured using the  metrics $d_R$, $d_F/\sqrt{2}$ for the pairs  $(Q_{\mathcal R},Q_{\mathcal I})$ 
  and the approximate distances  $d_{E,\theta}$ and  $d_{C,\theta}$ 
   for the pairs $({\mathcal R},{\mathcal I})$ and Figure~\ref{fig:toygunmur20metrics}~$(b)$ the corresponding results for the pairs
    $(Q_{\tilde{\mathcal R}},Q_{\mathcal I})$ and  $(\tilde{\mathcal R},{\mathcal I})$, respectively. }
 
The sharp peaks observed in Figure~\ref{fig:toygunmur20metrics}~$(a)$ near to $\omega=3.7 \times 10^3$ and $ 5.8\times 10^4$ rad/s for {$d_R$,  $d_F/\sqrt{2}$  for the pair $(Q_{\mathcal R},Q_{\mathcal I})$  (the curve for $d_F/\sqrt{2}$ is indistinguishable from $d_R$ on this scale)
 contrast to the much better behaved $d_{E,\theta}$ for the pair $({\mathcal R},{\mathcal I})$. These frequencies correspond to those at which the curves for $\lambda_i({\mathcal I})$ cross each other (the $\lambda_i({\mathcal R})$ curves do not cross).  In the case of $\omega=3.7 \times 10^3$ rad/s, where the $\lambda_i({\mathcal I})$ curves cross only momentarily and do not remain close,  a sharp peak, with smaller magnitude, is exhibited for $d_R$,  $d_F/\sqrt{2}$, but in the neighbourhood of $\omega= 5.8\times 10^4$ rad/s the curves remain close and $d_R$,  $d_F/ \sqrt{2} $ exhibit wider peaks with larger magnitude. Note that by choosing to evaluate these signatures over a finer set of discrete frequencies would increase the magnitude of $d_R$,  $d_F/\sqrt{2}$ close to $\omega=3.7 \times 10^3$ rad/s.  The additional peaks observed in Figure~\ref{fig:toygunmur20metrics}~$(b)$ for $d_R$,  $d_F/\sqrt{2}$ 
for the pair $(Q_{\tilde{\mathcal R}},Q_{\mathcal I})$
 and  $d_{E,\theta}$ for the pair $(\tilde{\mathcal R},{\mathcal I})$  close to $\omega=1.5 \times 10^3 $ rad/s, corresponds to the location where $\lambda_i(\tilde{\mathcal R})$ cross}. On this scale, there are no noticeable peaks in $d_{C,\theta} $ for either pairs  $({\mathcal R},{\mathcal I})$ or  $(\tilde{\mathcal R},{\mathcal I})$.
\begin{figure}[h]
\begin{center}
$\begin{array}{cc}
\includegraphics[width=0.45\textwidth]{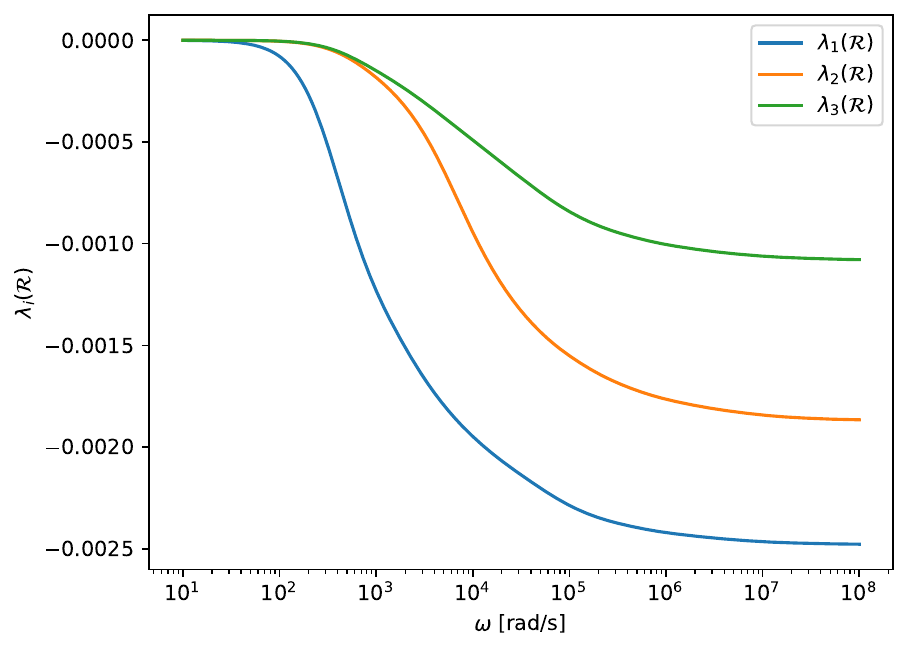} &
\includegraphics[width=0.45\textwidth]{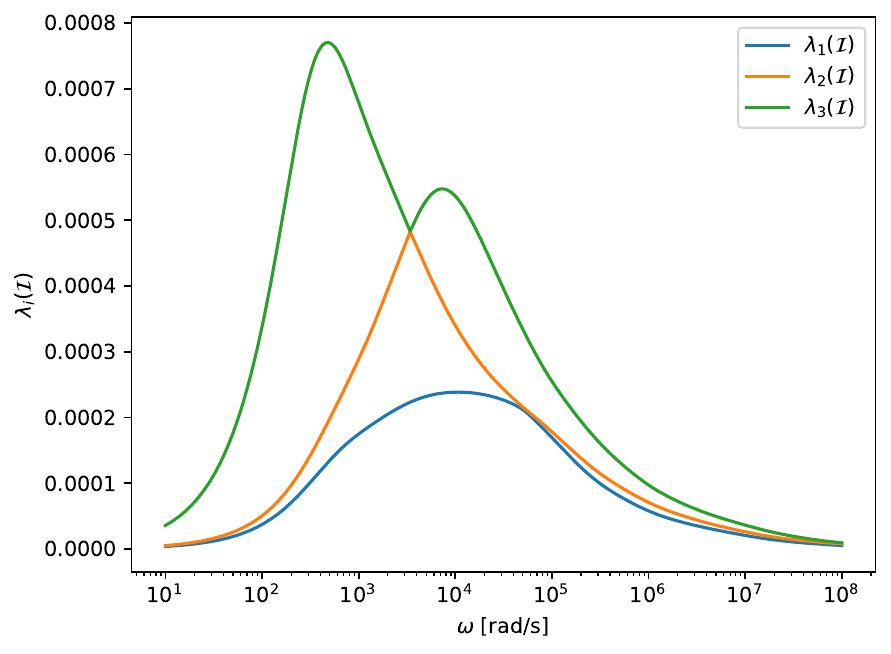} \\
(a) & (b) \\
\end{array}$\\
$\begin{array}{c}
\includegraphics[width=0.49\textwidth]{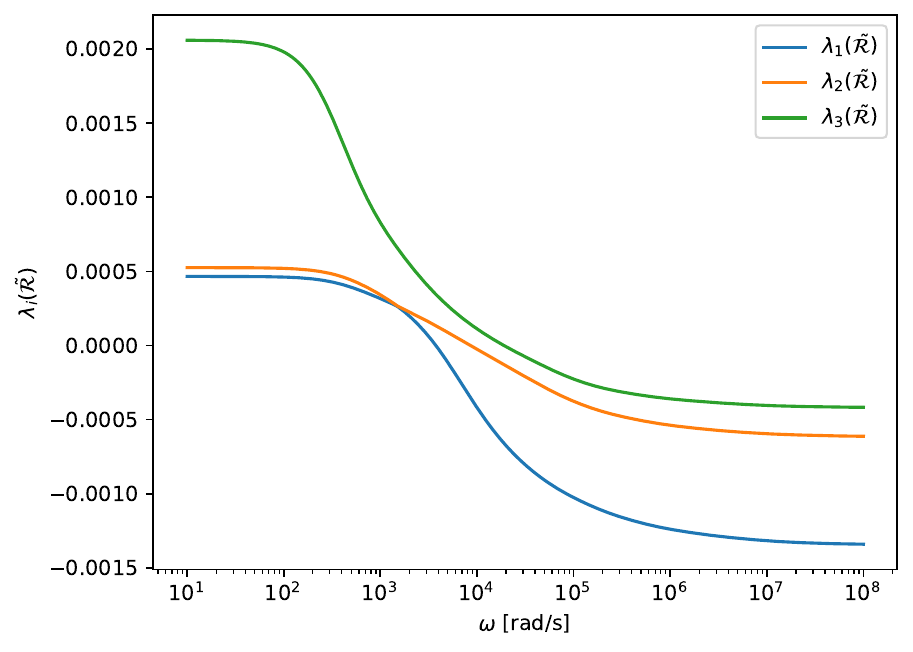} \\
(c) \end{array}$
\end{center}
\caption{Toy gun with $\mu_r=20$ in the barrel: 
Computed  eigenvalue spectral signatures $(a)$ $\lambda_i({\mathcal R}) $, $(b)$ $\lambda_i({\mathcal I}) $ and $(c)$ $\lambda_i(\tilde{\mathcal R})$.} \label{fig:toygunmur20}
\end{figure}

\begin{figure}[h]
\begin{center}
$\begin{array}{cc}
\includegraphics[width=0.5\textwidth]{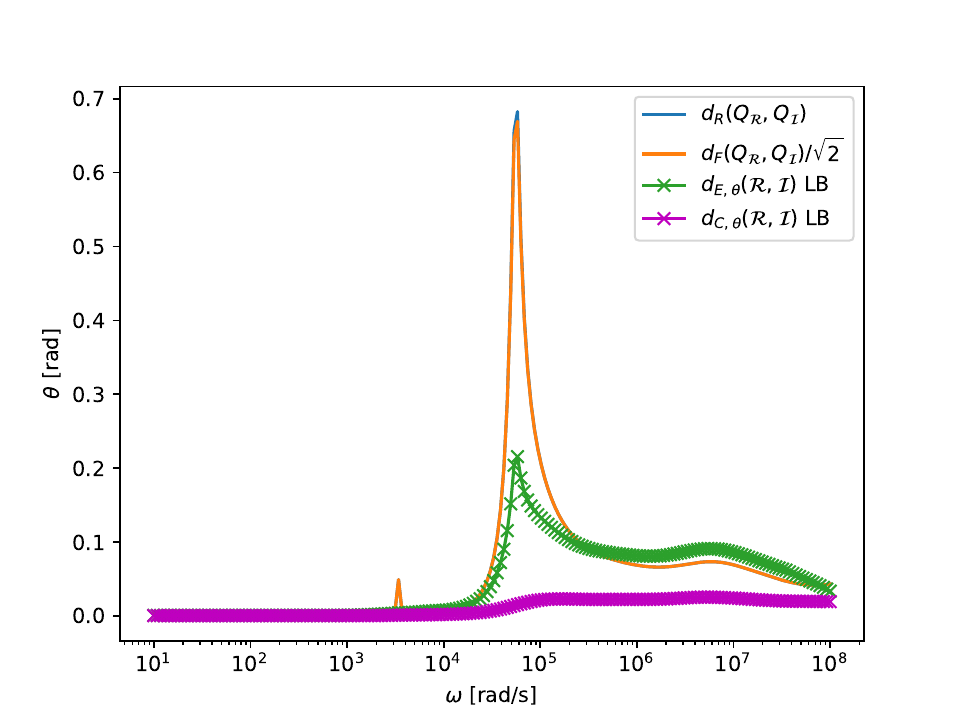}  &
\includegraphics[width=0.5\textwidth]{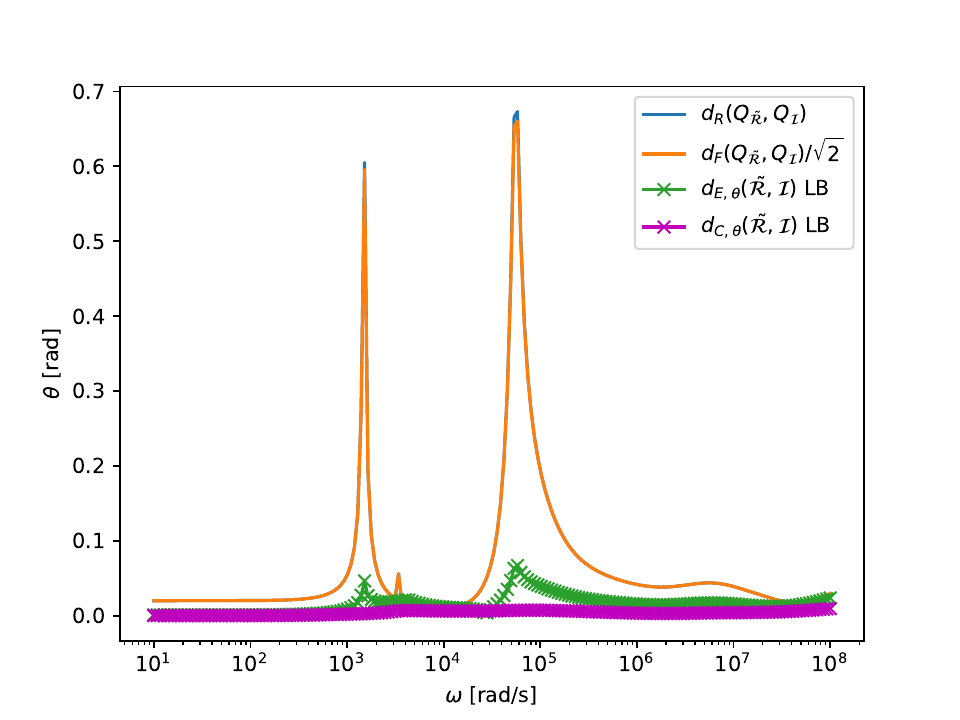} \\ (a) &(b) 
\end{array}$
\end{center}
\caption{{Toy gun with $\mu_r=20$ in the barrel: metric and approximate angle spectral signatures
$(a)$ $d_R( Q_{\mathcal R}, Q_{\mathcal I})$, $d_F(  Q_{\mathcal R}, Q_{\mathcal I})/\sqrt{2}$, $d_{E,\theta}( {\mathcal R}, {\mathcal I}) $ and $d_{C,\theta}( {\mathcal R} , {\mathcal I} ) $ and $(b)$  $d_R(Q_{ \tilde{\mathcal R}}, Q_{\mathcal I})$, $d_F( Q_{\tilde{\mathcal R}}, Q_{\mathcal I})/\sqrt{2}$, $d_{E,\theta}( \tilde{\mathcal R}, {\mathcal I}) $ and $d_{C,\theta}( \tilde{\mathcal R} , {\mathcal I} ) $}. }\label{fig:toygunmur20metrics}
\end{figure}

\subsubsection{Case $\mu_r=100$}

Figure~\ref{fig:toygunmur100} shows the converged eigenvalue spectral signatures $\lambda_i({\mathcal R})$, $\lambda_i({\mathcal I})$ and $\lambda_i(\tilde{\mathcal R})$ for the toy gun example when the barrel has a relative magnetic permeability of $\mu_r=100$. Notice that for this case, $\lambda_2({\mathcal I})= \lambda_3({\mathcal I})$ at $\omega=2.4\times 10^4$ rad/s,  $\lambda_1({\mathcal I})=\lambda_2({\mathcal I})$ at $\omega=3.3\times10^5$ rad/s
while $\lambda_1(\tilde{\mathcal R}) = \lambda_2(\tilde{\mathcal R})$ at $\omega=1.83 \times 10^3$ rad/s, {which are excluded.}
 Figure~\ref{fig:toygunmur100metrics}~$(a)$ shows the {spectral signatures of distances measured by the metrics $d_R$, $d_F/\sqrt{2}$ for the pair $(Q_{\mathcal R},Q_{\mathcal I})$
 and the approximate distances $d_{E,\theta}$ and  $d_{C,\theta}$  for the pair
  $({\mathcal R},{\mathcal I})$ and Figure~\ref{fig:toygunmur100metrics}~$(b)$ the corresponding results for the pairs $(Q_{\tilde{\mathcal R}},Q_{\mathcal I})$ and $(\tilde{\mathcal R},{\mathcal I})$, respectively. }

\begin{figure}[h]
\begin{center}
$\begin{array}{cc}
\includegraphics[width=0.5\textwidth]{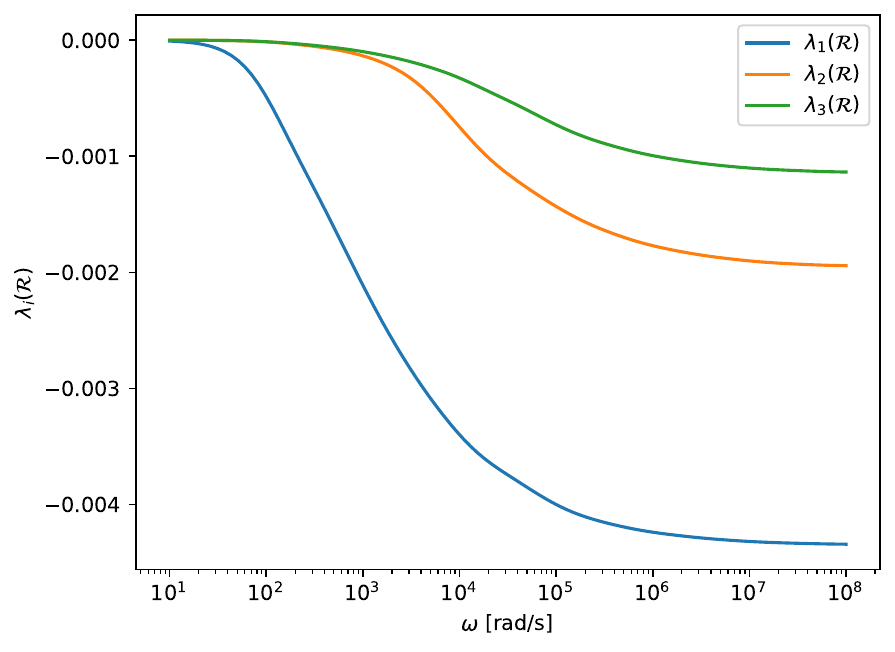} &
\includegraphics[width=0.5\textwidth]{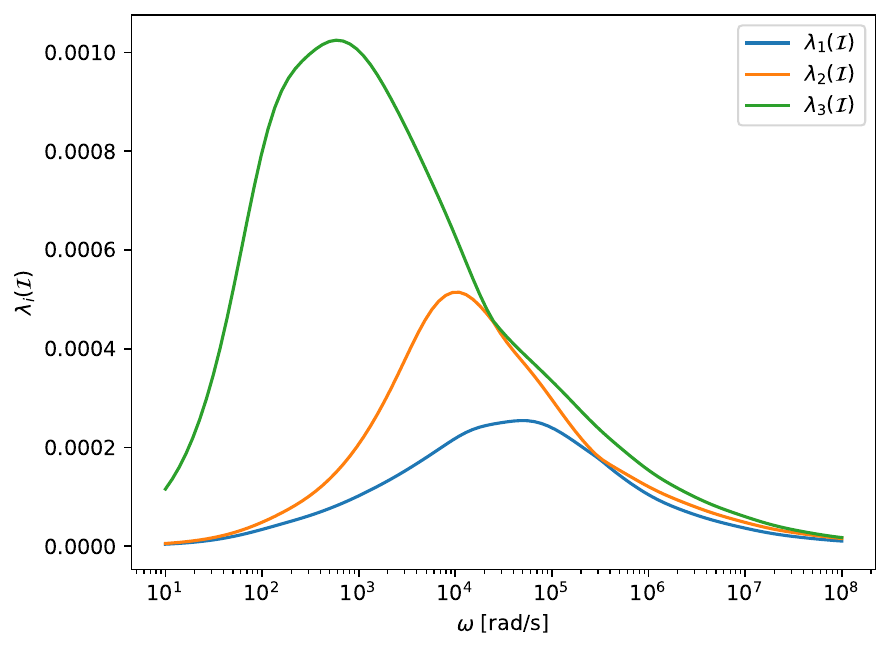} \\
(a) & (b) \\
\end{array}$\\
$\begin{array}{c}
\includegraphics[width=0.5\textwidth]{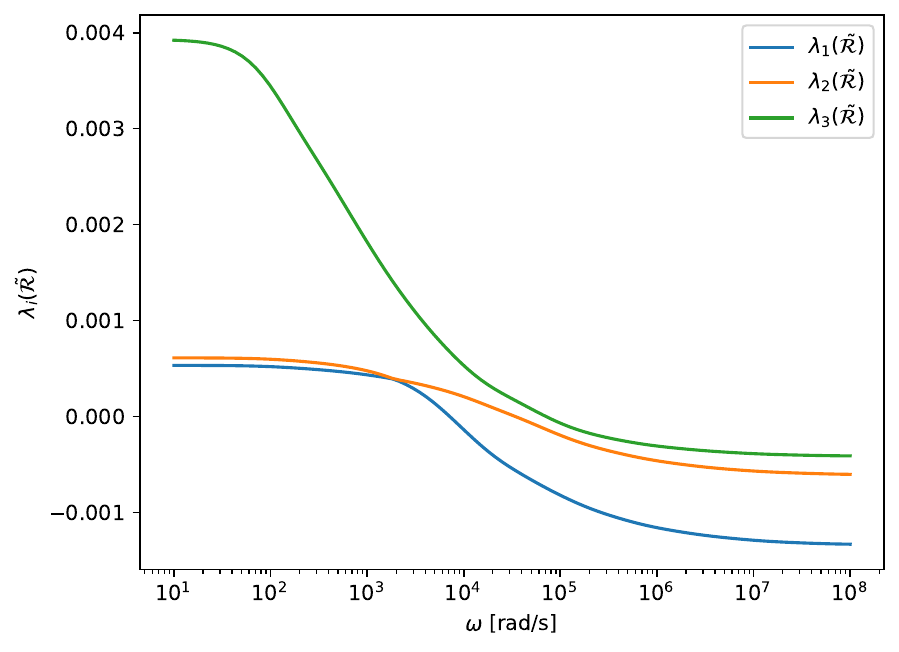} \\
(c) \end{array}$
\end{center}
\caption{Toy gun with $\mu_r=100$ in the barrel: 
Computed  eigenvalue spectral signatures $(a)$ $\lambda_i({\mathcal R}) $, $(b)$ $\lambda_i({\mathcal I}) $ and $(c)$ $\lambda_i(\tilde{\mathcal R})$.} \label{fig:toygunmur100}
\end{figure}

\begin{figure}[h]
\begin{center}
$\begin{array}{cc}
\includegraphics[width=0.5\textwidth]{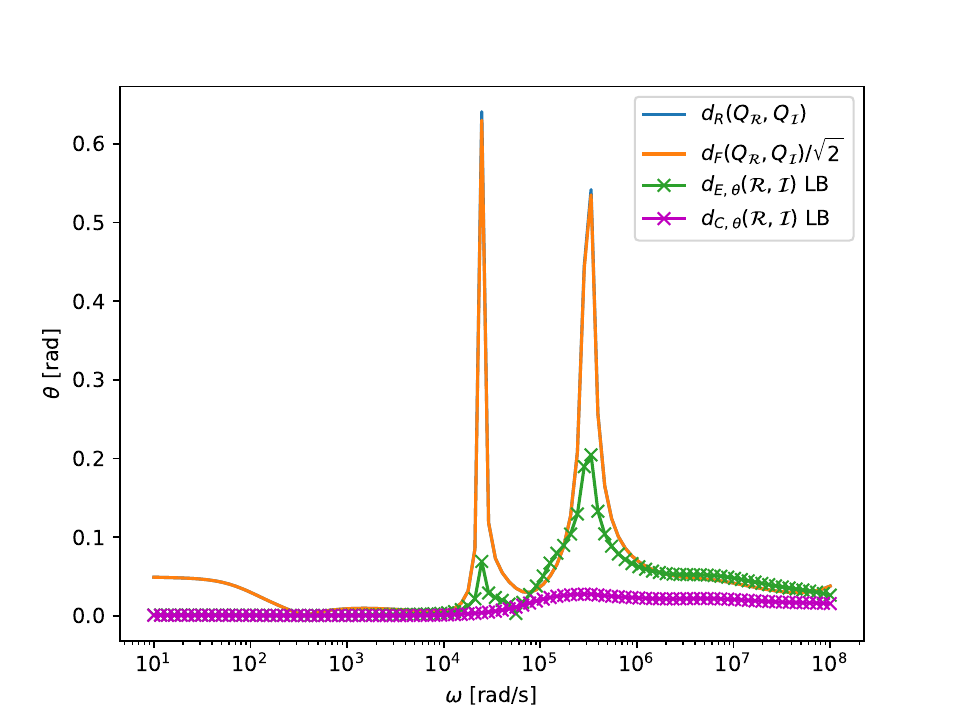}  &
\includegraphics[width=0.5\textwidth]{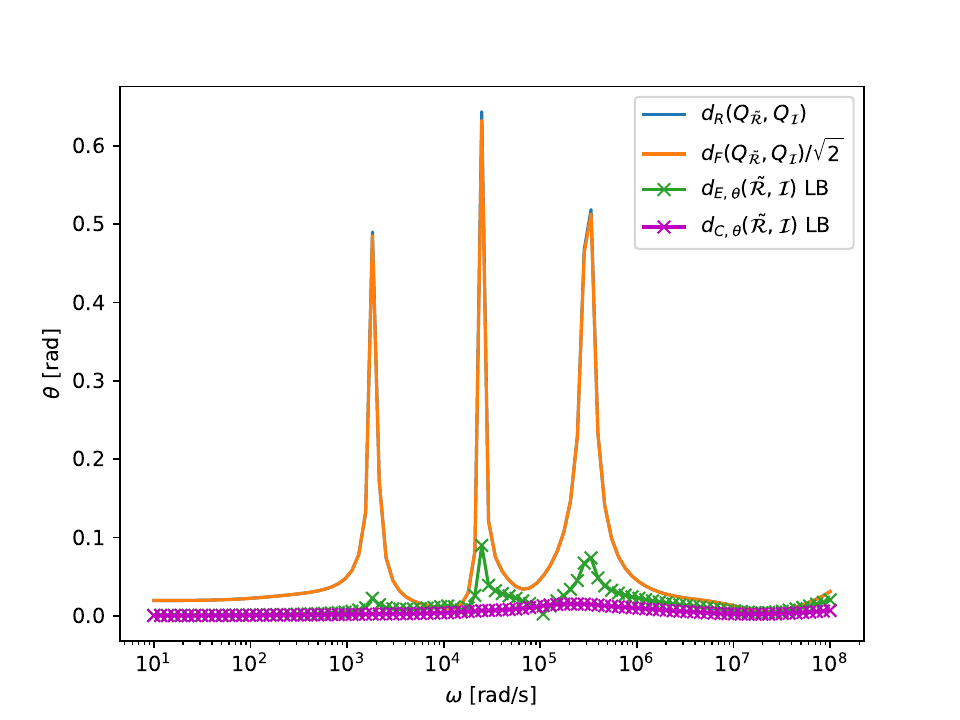} \\ (a) &(b) 
\end{array}$
\end{center}
\caption{{Toy gun with $\mu_r=100$ in the barrel: metric and approximate angle spectral signatures
$(a)$ $d_R( Q_{\mathcal R}, Q_{\mathcal I})$, $d_F(  Q_{\mathcal R},Q_{\mathcal I})/\sqrt{2}$, $d_{E,\theta}( {\mathcal R}, {\mathcal I}) $ and $d_{C,\theta}( {\mathcal R} , {\mathcal I} ) $ and $(b)$  $d_R(Q_{ \tilde{\mathcal R}}, Q_{\mathcal I})$, $d_F( Q_{\tilde{\mathcal R}}, Q_{\mathcal I})/\sqrt{2}$, $d_{E,\theta}( \tilde{\mathcal R}, {\mathcal I}) $ and $d_{C,\theta}( \tilde{\mathcal R} , {\mathcal I} ) $.}} \label{fig:toygunmur100metrics}
\end{figure}

For practical measurements, which are subject to noise, the new measures  $d_{E,\theta}$ and  $d_{C,\theta}$  we propose offer additional advantages. From the results of Section~\ref{sect:sensmetrics}, we know that the computation of eigenvectors will not only deteriorate with the noise, but also with the proximity of the eigenvalues, which are also polluted in the case of noise. The new measures, on the other hand, do not suffer from this drawback and hence offer greater potential in object classification using measured data.

\subsection{An application to object classification}

The final example considers an application  {of our  approximate measures of  distance applied to the MPT spectral signature at discrete frequencies}  as additional features for object classification.

\subsubsection{Dataset}
In our previous work, we have described an approach to object classification based on tensor invariants of the $\tilde{\mathcal R}$ and ${\mathcal I}$ evaluated at discrete frequencies of the spectral signature~\cite{ledgerwilsonlion2022}. The {spectral signature of the approximate distance $d_{E,\theta}(\tilde{\mathcal R},{\mathcal I})$, evaluated at discrete frequencies, } are proposed as  additional features that can augment these tensor invariants.
We follow the same notation as~\cite{ledgerwilsonlion2022}, where dictionaries of the form
\begin{align}
{\mathcal D} = \{ ({\rm x}_1, {\rm t}_1), ({\rm x}_2, {\rm t}_2, \ldots, ({\rm x}_P, {\rm t}_P)\} ,
\end{align}
with $P$ data pairs are constructed. In the above, the pairs $({\rm x},{\rm t})$ correspond to the feature vector ${\rm x}\in {\mathbb R}^F$ and the class encoding ${\rm t}\in{\mathbb R}^K$, where $F$ is the number of features and $K$ the number of classes~\footnote{{The definitions of $K$, $C$ and $M$ in this section are different to those in Section~\ref{sect:metrics}, but, given the context, no confusion should arise.}}. Given $K$ discrete classes, $C_1, C_2., \ldots, C_K$, then if the correct class is $C_k$, the class encoding take the form
\begin{align}
({\rm t})_i = \left \{ \begin{array}{ll}
1 & \text{if $i=k$}, \\
0 & \text{otherwise} \end{array} \right . .
\end{align}
However, we now consider extended feature vectors of length $F=7M$ of the form
\begin{align}
({\rm x})_i = \left \{ \begin{array}{ll}
I_j ( \tilde{\mathcal R}(\alpha B, \omega_m, \sigma_*, \mu_r)), & i =j+(m-1)M, \\
I_j ( {\mathcal I}(\alpha B, \omega_m, \sigma_*, \mu_r)), & i =j+(m+2)M, \\
d_{E,\theta}(\tilde{\mathcal R}(\alpha B, \omega_m, \sigma_*, \mu_r)), {\mathcal I}(\alpha B, \omega_m, \sigma_*, \mu_r)))), & i =1 + (m+5)M,
\end{array} \right .
\end{align}
where $\omega_m$, $m=1,\ldots,M$ denote the discrete frequencies for which the MPT signature is evaluated/measured and $I_j=1,2,3$ denote the principal tensor invariants, as defined in~\cite{ledgerwilsonlion2022}.

An improved, and extended, dataset of object characterisations (which importantly now includes magnetic objects) has been established through our latest open release~\cite{mpt-library}.  Taken from this dataset, Figure~\ref{fig:objectmeshes} illustrates surface meshes for a range of different object geometries including: British coins, pendants, knives, screwdrivers and toy gun models (which including vairations in shape, size and number of barrels of that presented in Section~\ref{sect:toygun}). The British coins consist of a mixture of homogeneous objects and non-homogenous objects. Notably, the recent one pence (1p), two pence (2p), one pound (\pounds 1) and two pound (\pounds 2) are magnetic and inhomogeneous with the newer  denominations
of the 1p and 2p coins being introduced due to the cost of the original solid copper coin exceeding its face value~\cite{Elgy2022_preprintB}. While not all coins are circular, they do have mirror symmetries so that their MPT characterisation of the objects in their canonical configuration is diagonal.  
The homogeneous pendants are all constructed of non-magnetic materials (including gold, silver and platinum) and each have two or more mirror symmetries so that their MPT characterisation in the canonical configuration is diagonal. 
Inhomogeneous kitchen knife models that have blades made of both magnetic and non-magnetic materials are considered and, if appropriate, include copper rivets. These models have only one mirror symmetry and their MPT characterisation is not diagonal in the canonical configuration. Finally, inhomogeneous toy gun models with a single or a double barrel made of a magnetic steel and receivers with non-magnetic material are considered. These models have been constructed so as to not have mirror symmetries. Given the smaller skin depths associated with magnetic materials, prismatic boundary layer elements are included for these objects. Full details of each of the objects is provided in~\cite{mpt-library}, with additional variations in object size and object conductivity incorporated using scaling results~\cite{Wilson2021} and the addition of 40 dB noise, which  require only negligible
 computation. 

\begin{figure}
\vspace{-0.1in}
\begin{center}
$\begin{array}{ccccccccc}
\includegraphics[width=0.0325\textwidth, keepaspectratio]{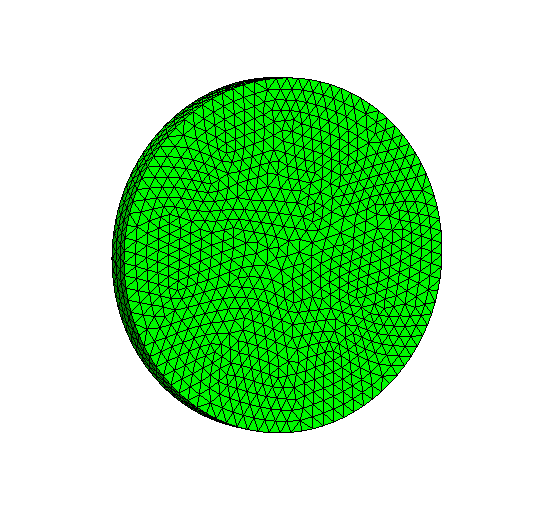} &
\includegraphics[width=0.045\textwidth, keepaspectratio]{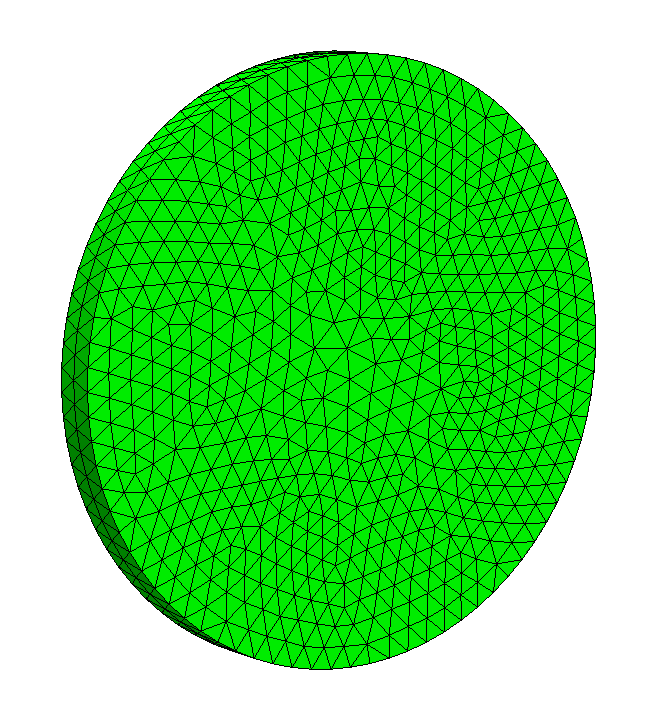} &
\includegraphics[width=0.05\textwidth, keepaspectratio]{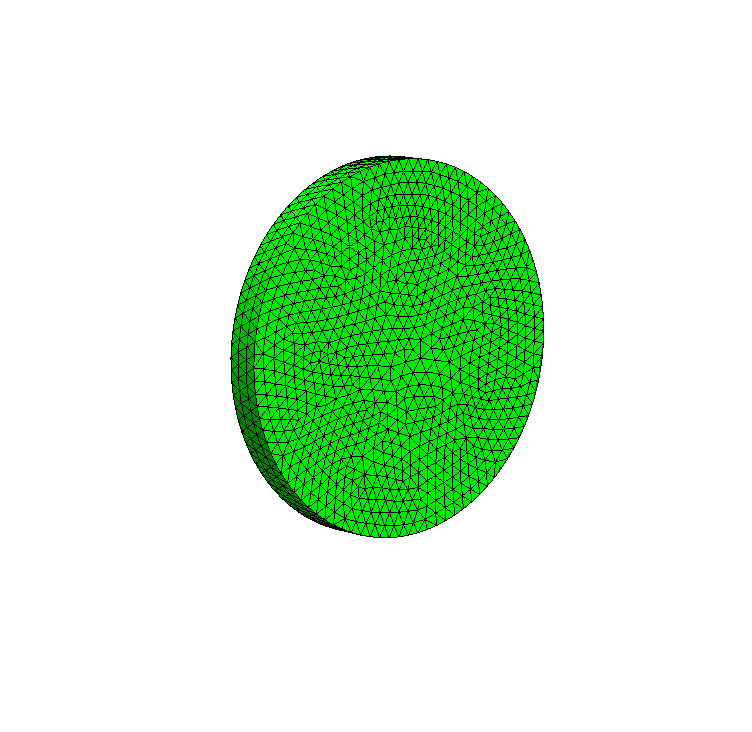} &
\includegraphics[width=0.045\textwidth, keepaspectratio]{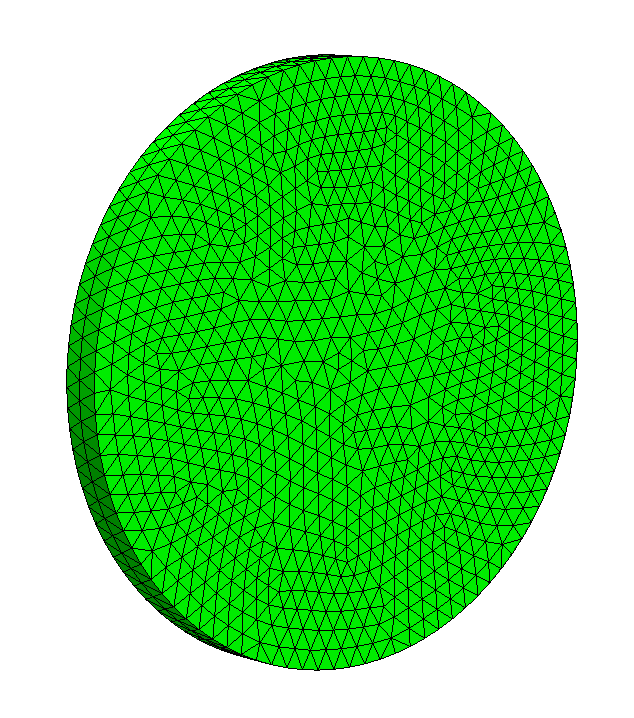} &
\includegraphics[width=0.0325\textwidth, keepaspectratio]{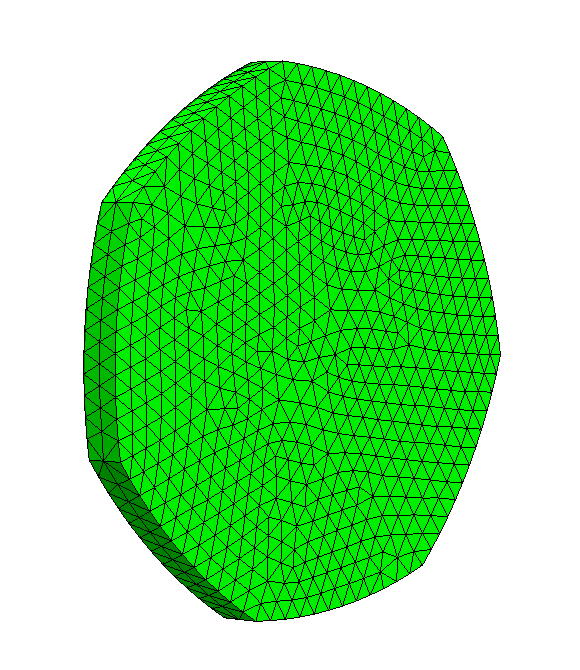} &
\includegraphics[width=0.075\textwidth, keepaspectratio]{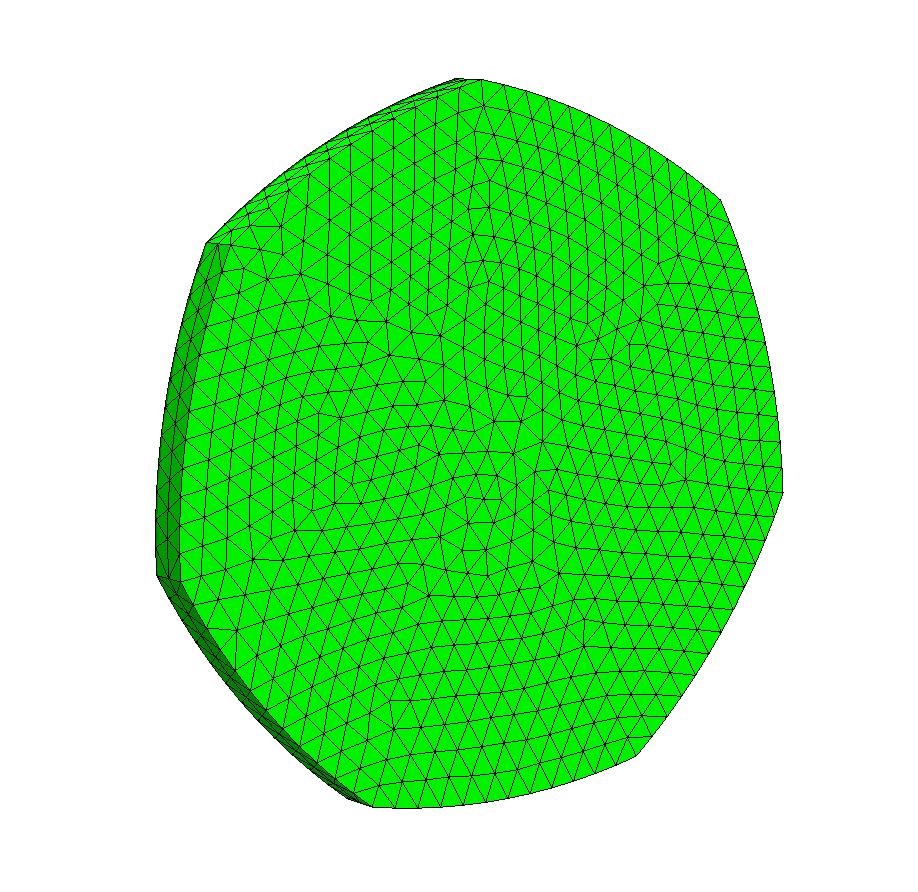} &
\includegraphics[width=0.075\textwidth, keepaspectratio]{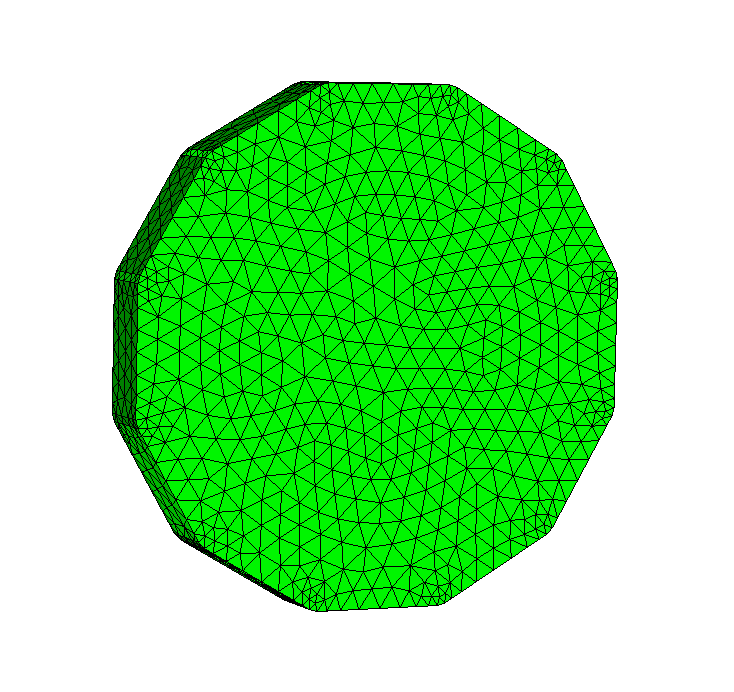} &
\includegraphics[width=0.075\textwidth, keepaspectratio]{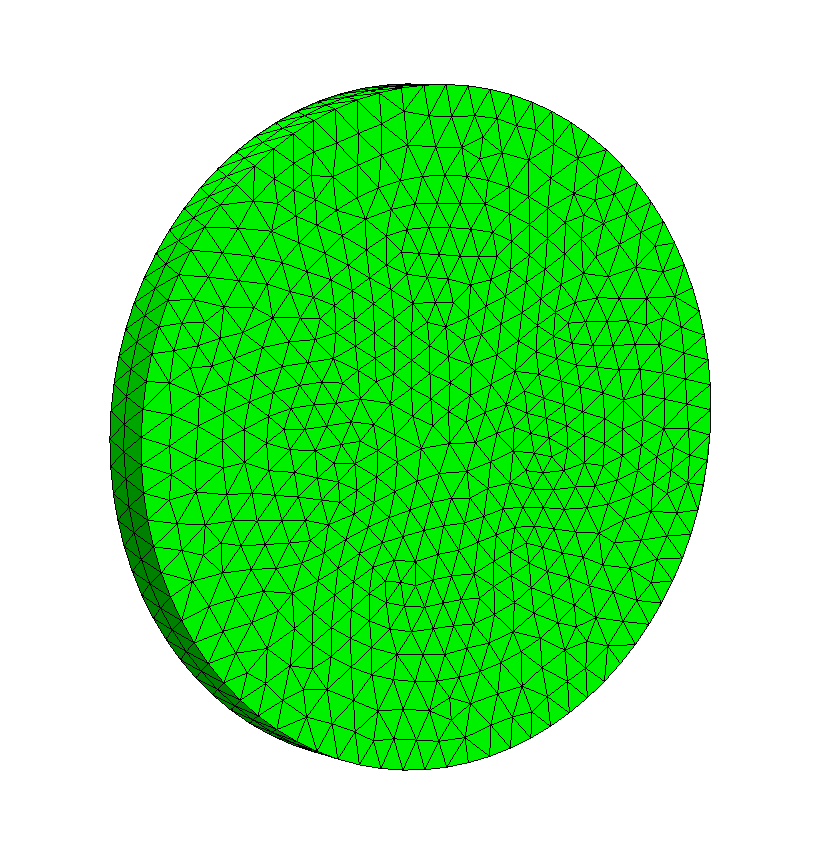} &
\includegraphics[width=0.075\textwidth, keepaspectratio]{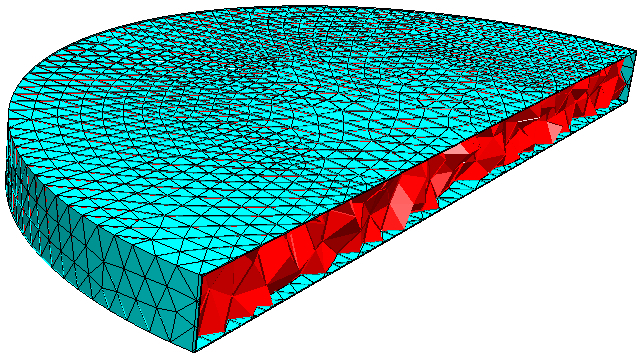} 
\end{array}$
\end{center}
\begin{center}
$(a)$ Selection of British coins (with layered materials modelled using prismatic layers).
\end{center}
\begin{center}
$\begin{array}{ccccccc}
\includegraphics[width=0.04\textwidth, keepaspectratio]{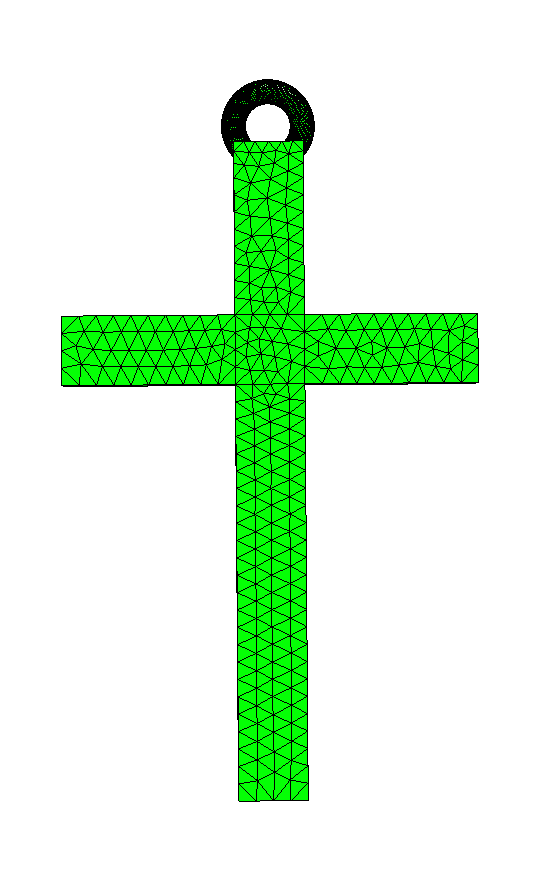}&
\includegraphics[width=0.04\textwidth, keepaspectratio]{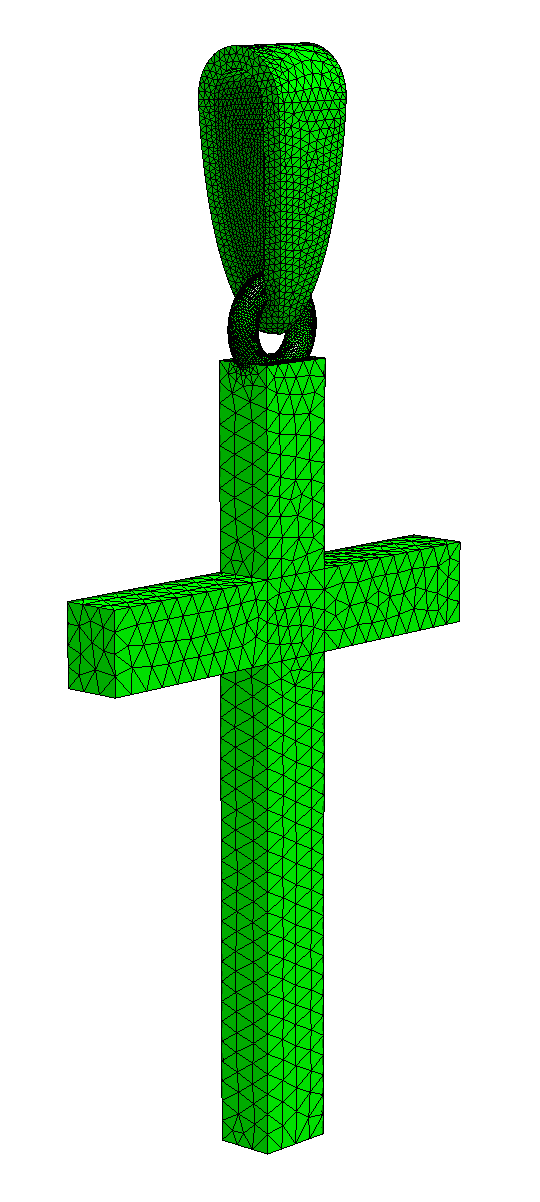} &
\includegraphics[width=0.04\textwidth, keepaspectratio]{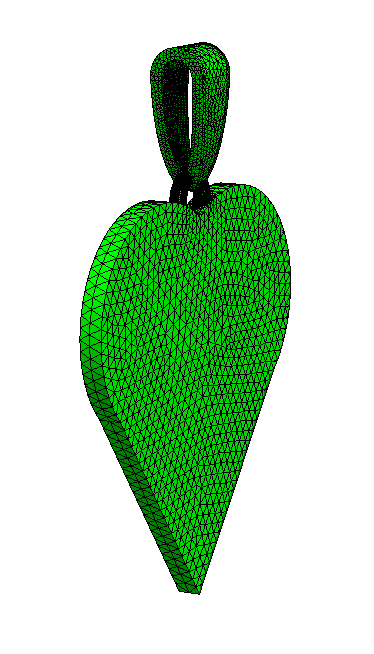} &
\includegraphics[width=0.04\textwidth, keepaspectratio]{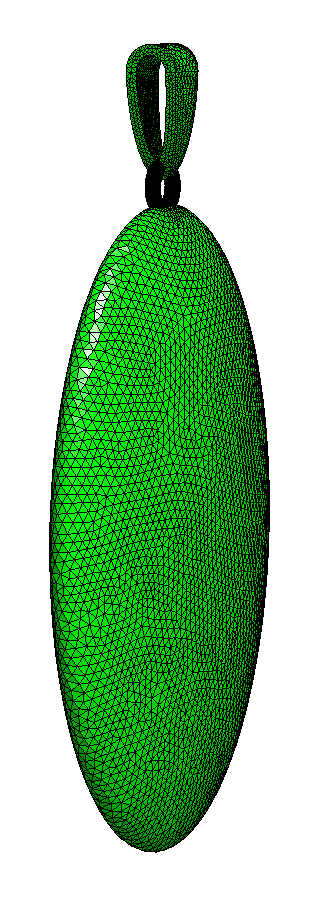} &
\includegraphics[width=0.04\textwidth, keepaspectratio]{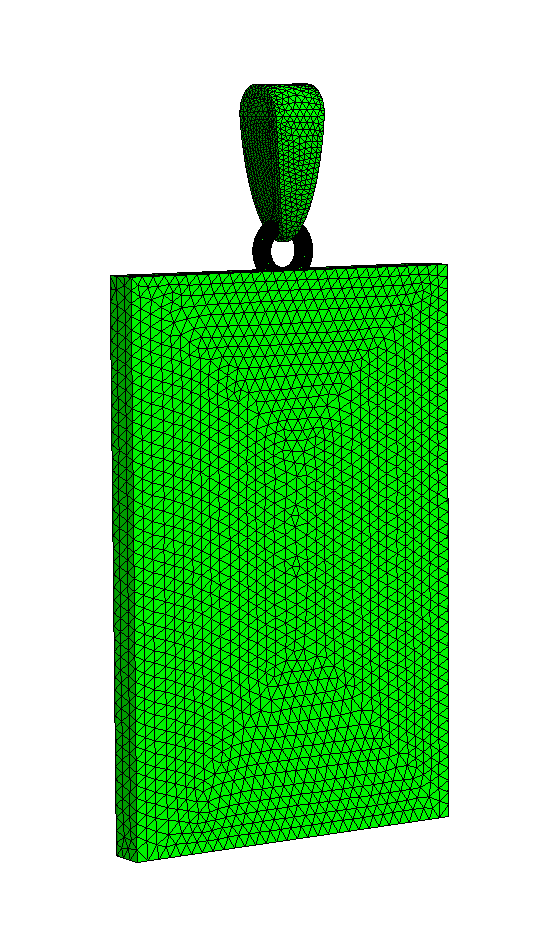} &
\includegraphics[width=0.04\textwidth, keepaspectratio]{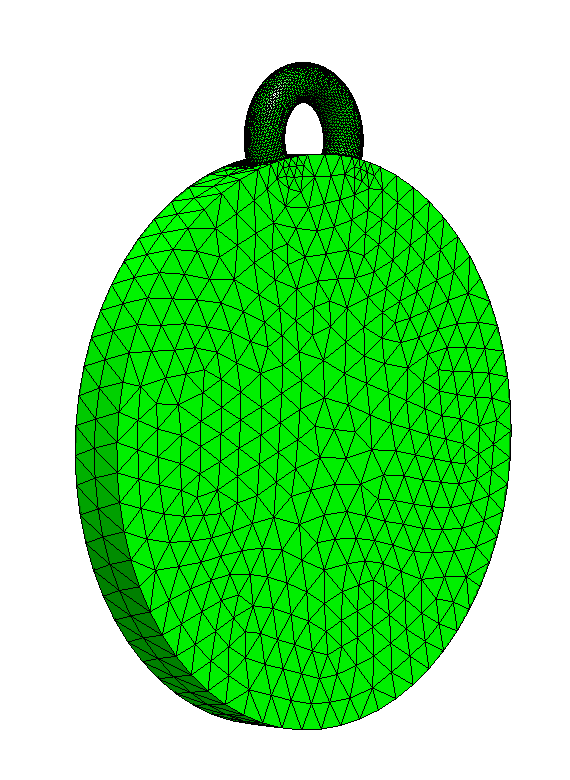} &
\includegraphics[width=0.04\textwidth, keepaspectratio]{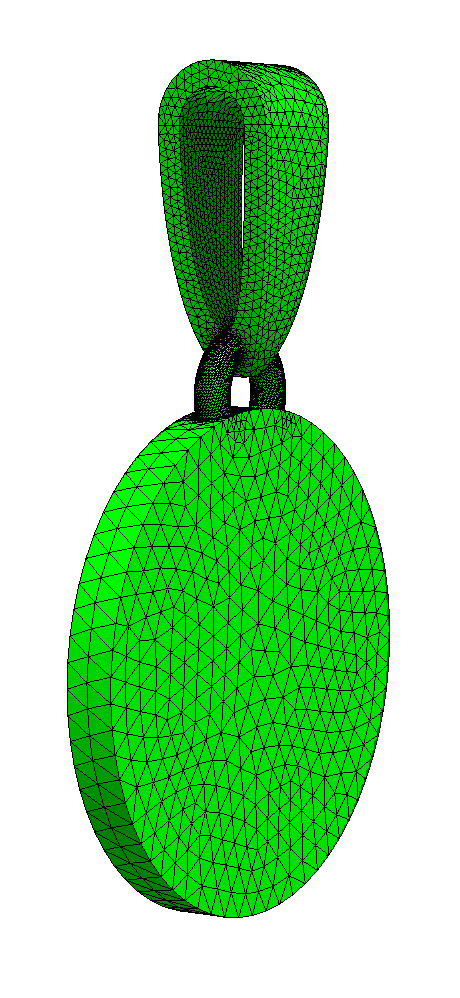}
\end{array}$ 
\end{center}
\begin{center}
$(b)$ Selection of different pendants.
\end{center}
\begin{center}
$\begin{array}{cccccc}
\includegraphics[width=0.1\textwidth, keepaspectratio]{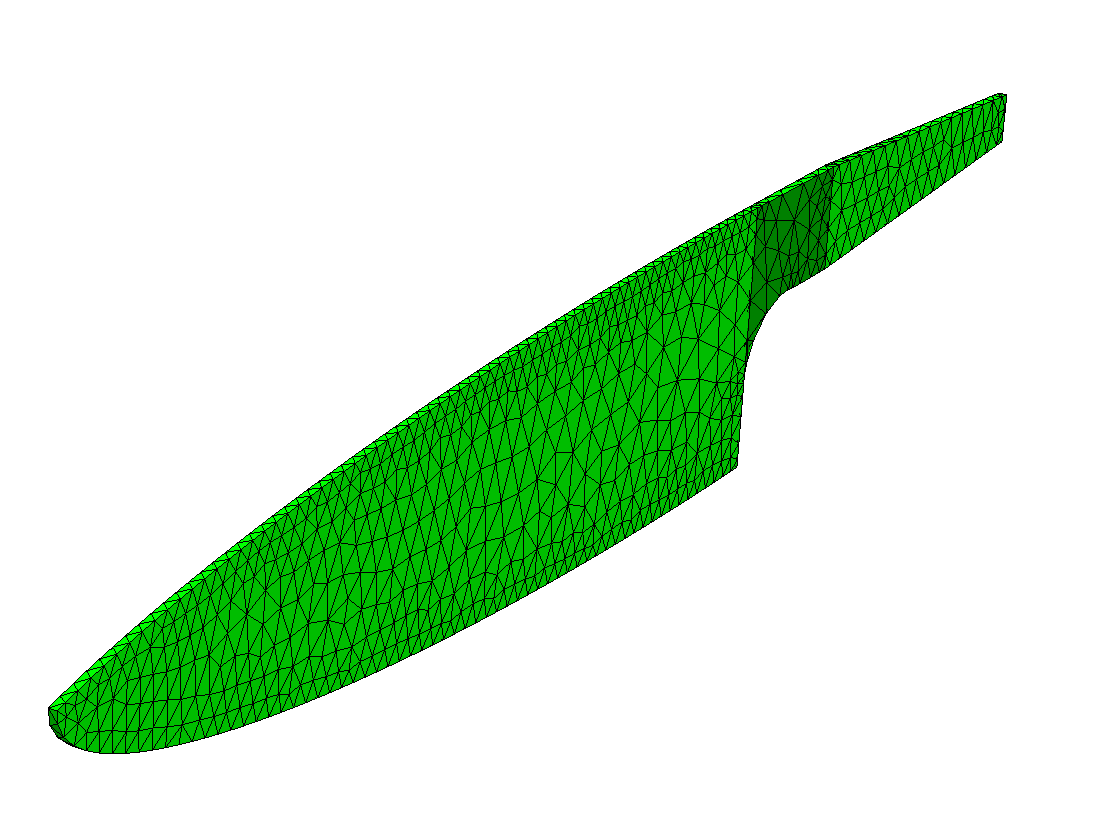} &
\includegraphics[width=0.1\textwidth, keepaspectratio]{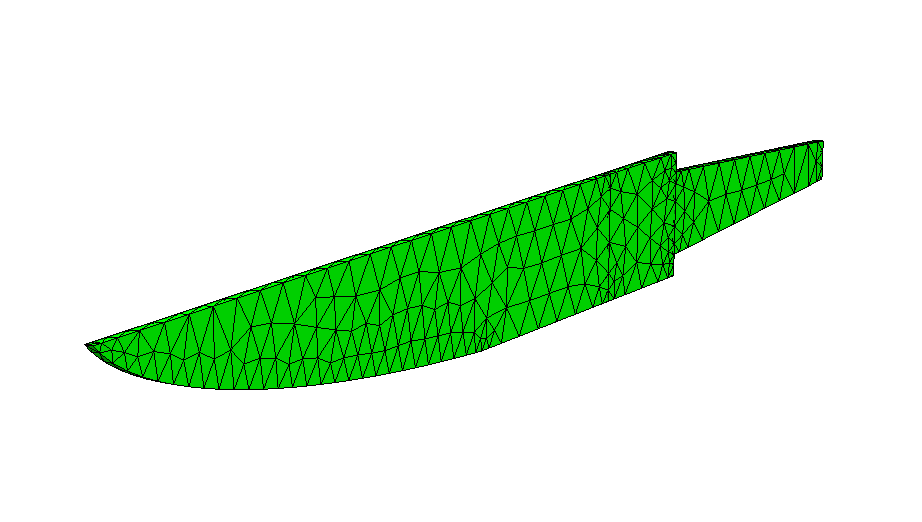} &
\includegraphics[width=0.1\textwidth, keepaspectratio]{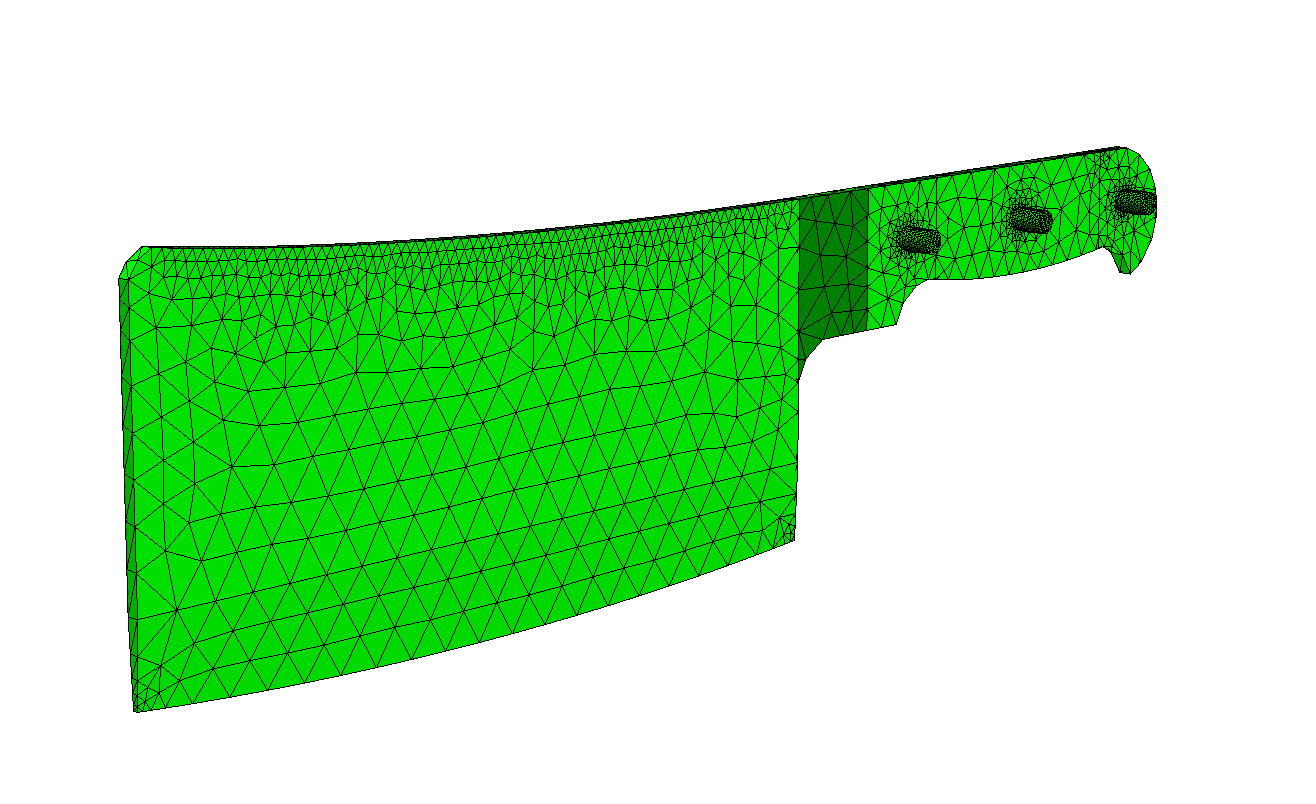} &
\includegraphics[width=0.1\textwidth, keepaspectratio]{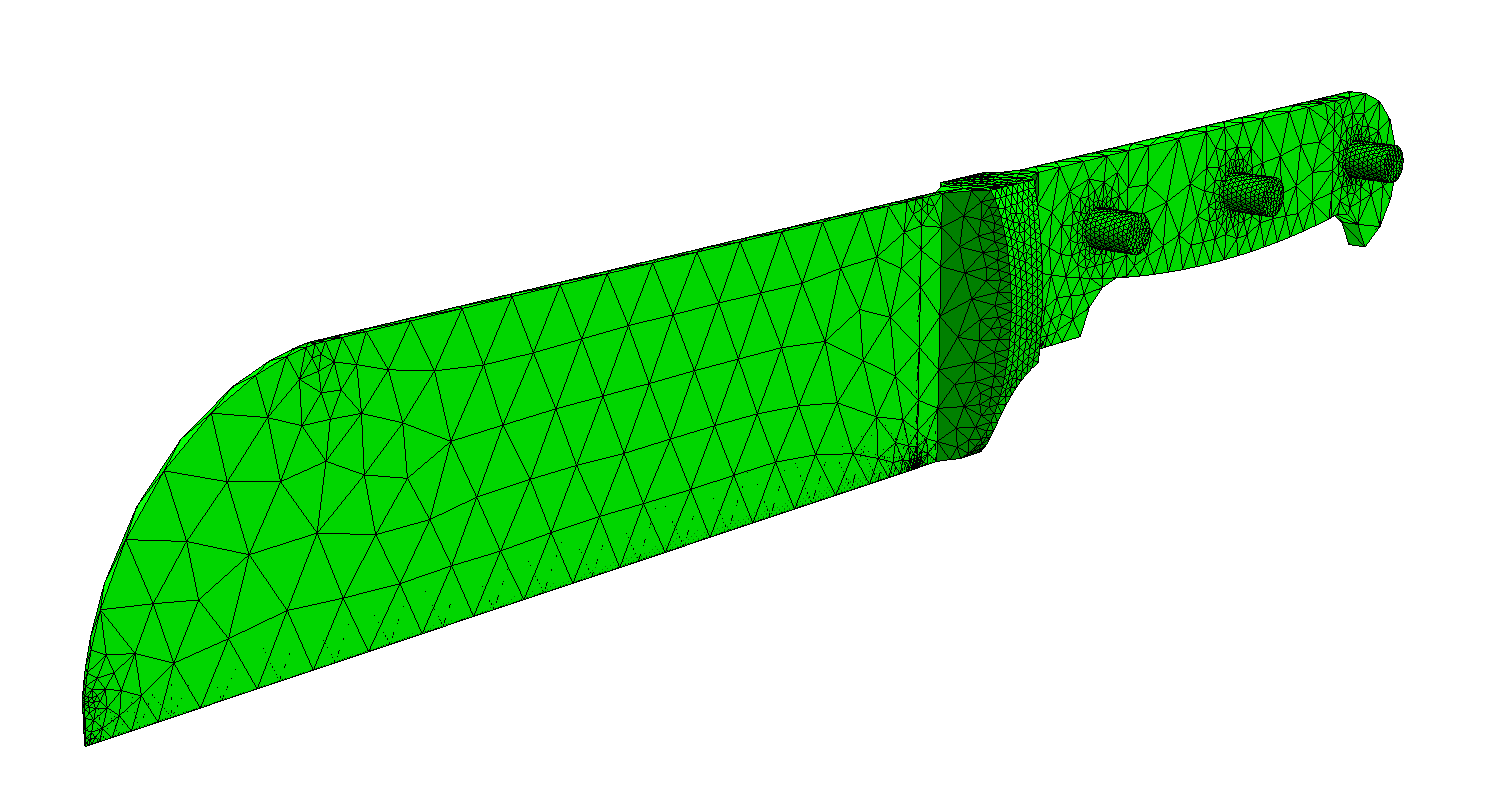} &
\includegraphics[width=0.1\textwidth, keepaspectratio]{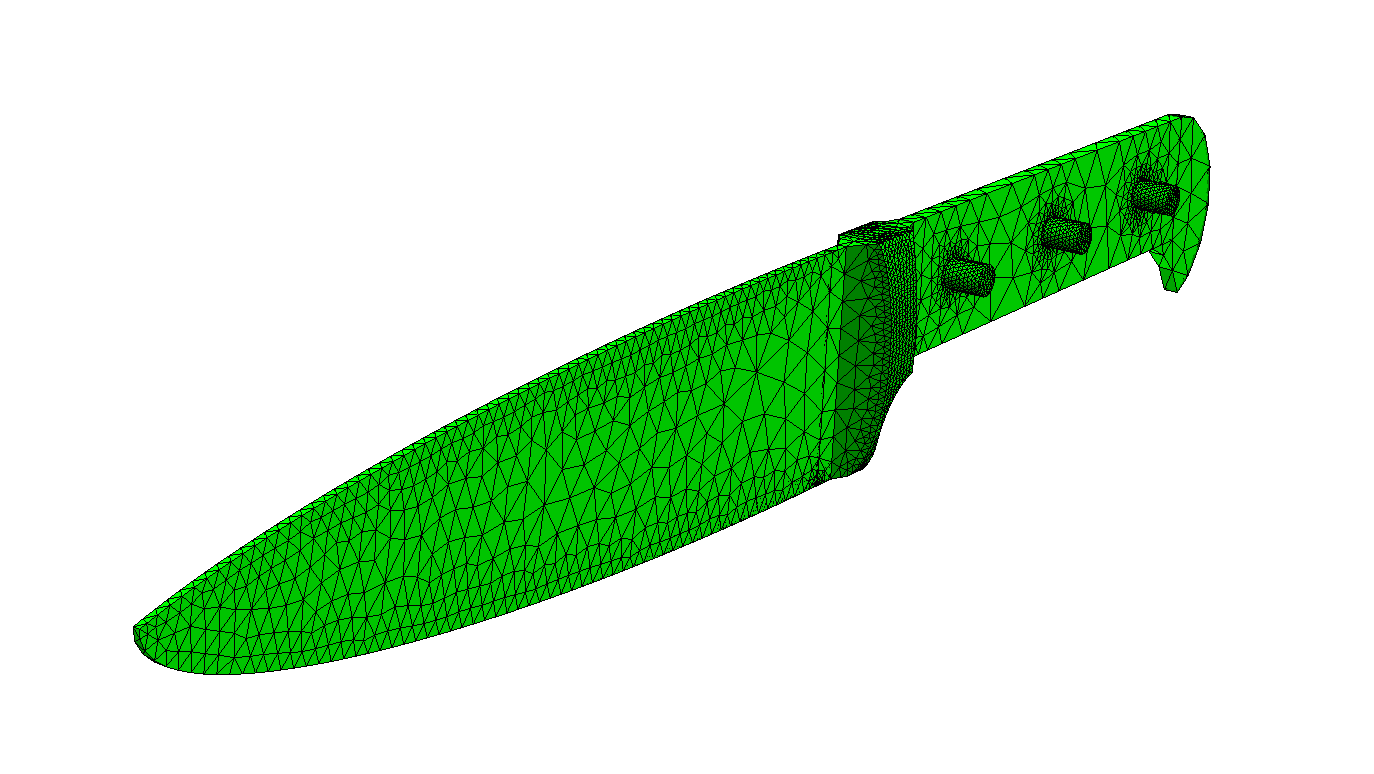} &
\includegraphics[width=0.075\textwidth, keepaspectratio]{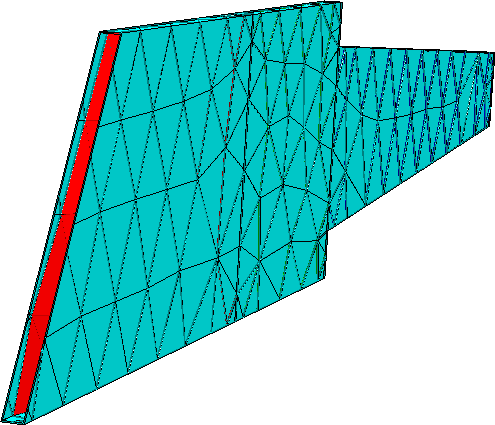}
\end{array}$
\end{center}
\begin{center}
$(c)$ Kitchen knives with magnetic and non-magnetic blades (modelled with prismatic layers).
\end{center}
\begin{center}
$\begin{array}{ccccccc}
\includegraphics[width=0.1\textwidth, keepaspectratio]{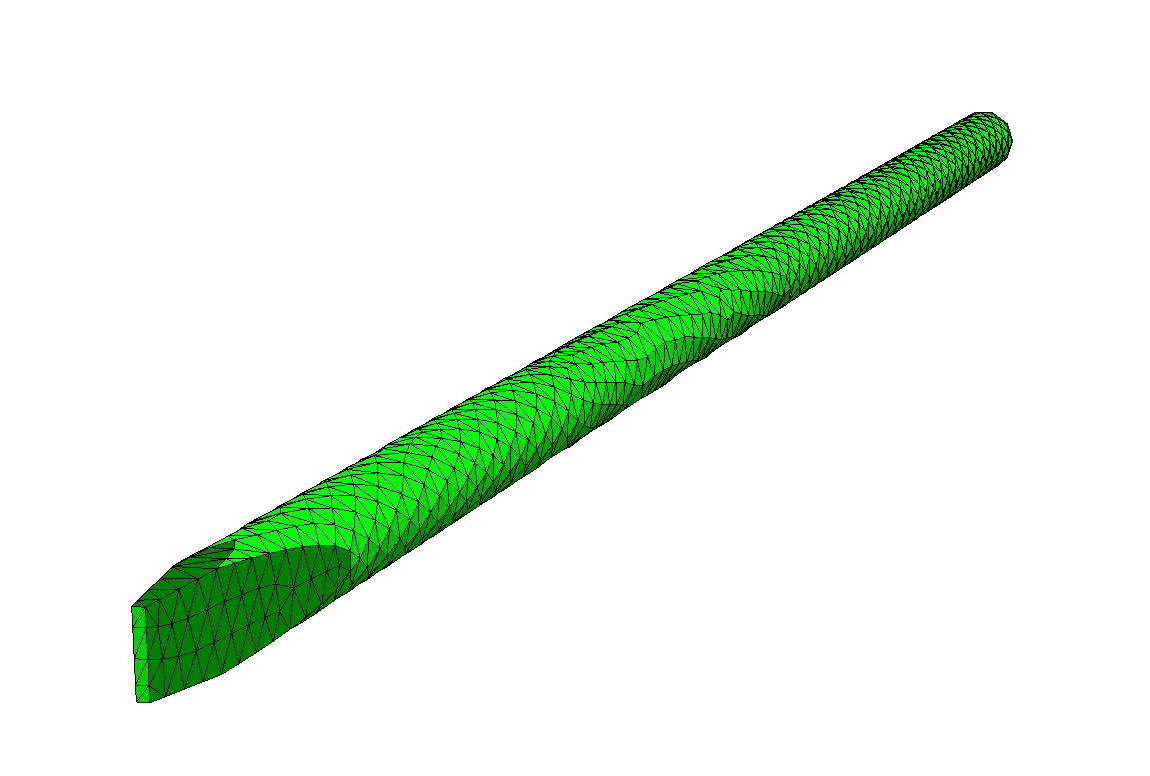} & 
\includegraphics[width=0.1\textwidth, keepaspectratio]{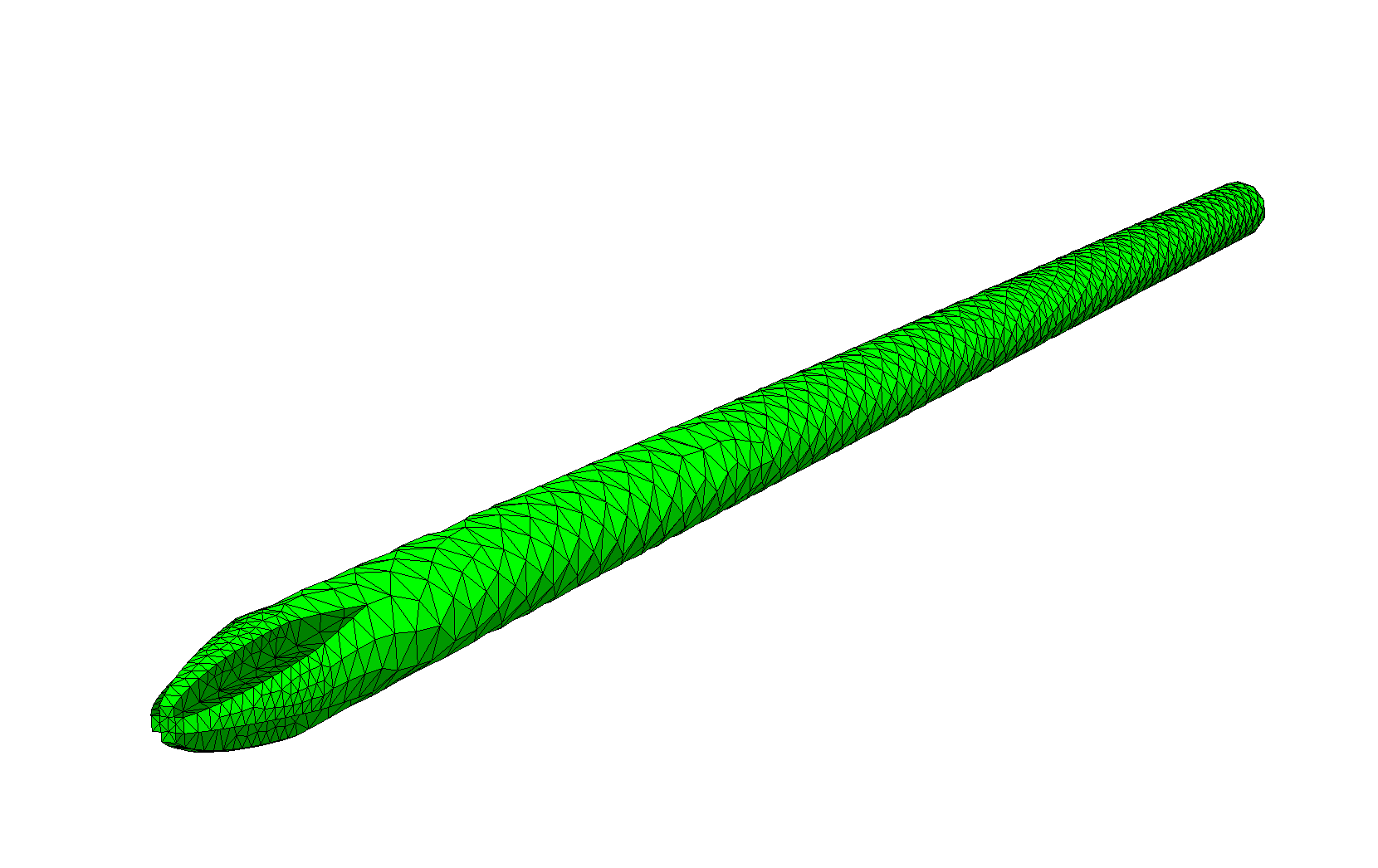} &
\includegraphics[width=0.075\textwidth, keepaspectratio]{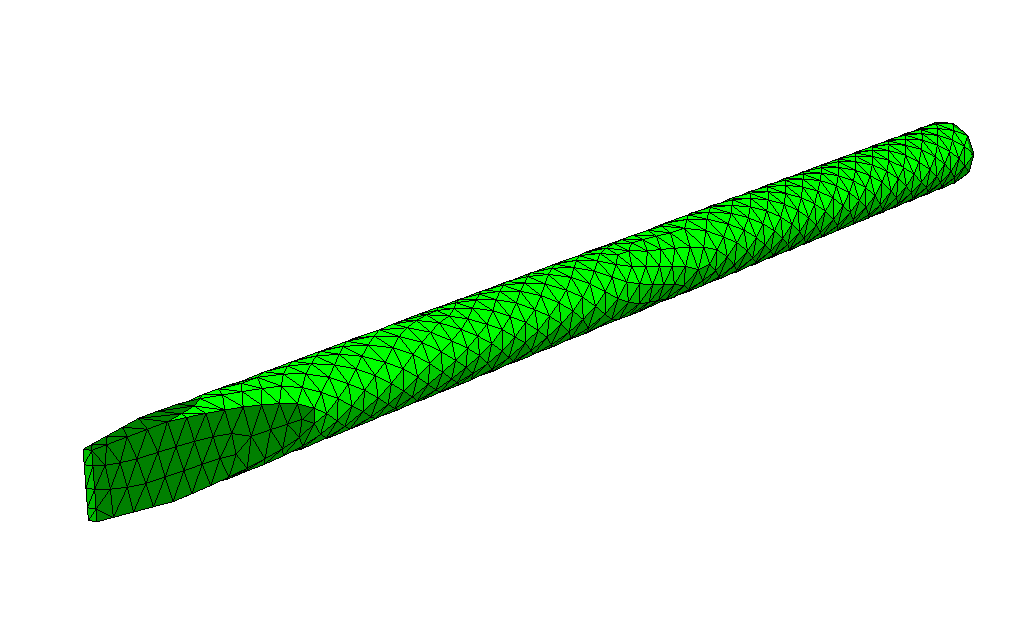}&
\includegraphics[width=0.075\textwidth, keepaspectratio]{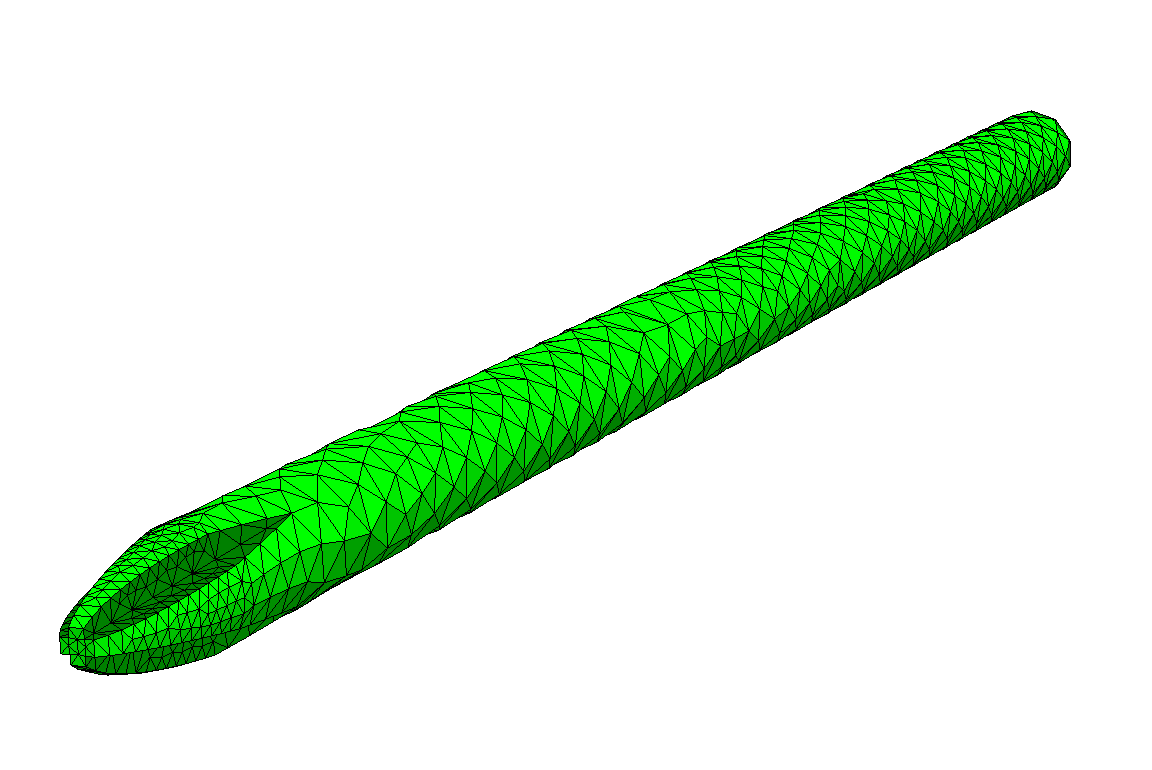}&
\includegraphics[width=0.05\textwidth, keepaspectratio]{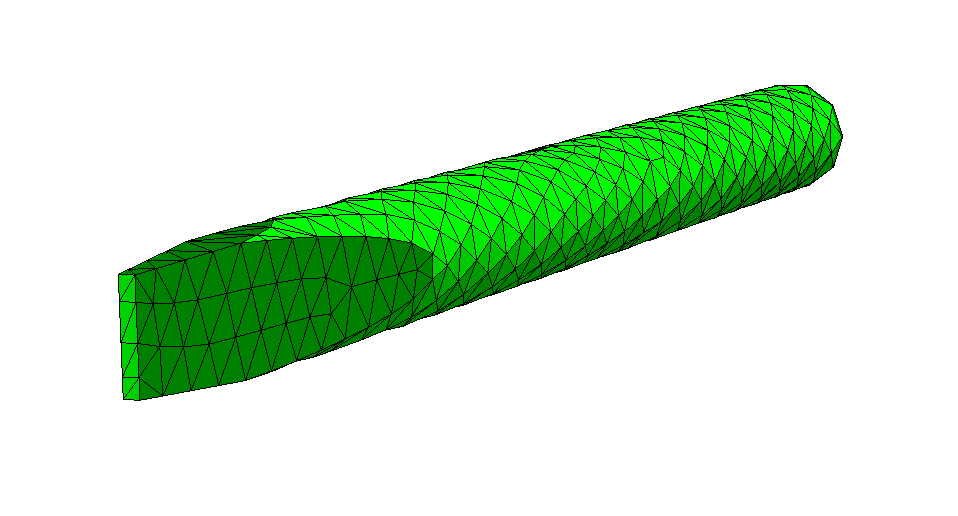} &
\includegraphics[width=0.05\textwidth, keepaspectratio]{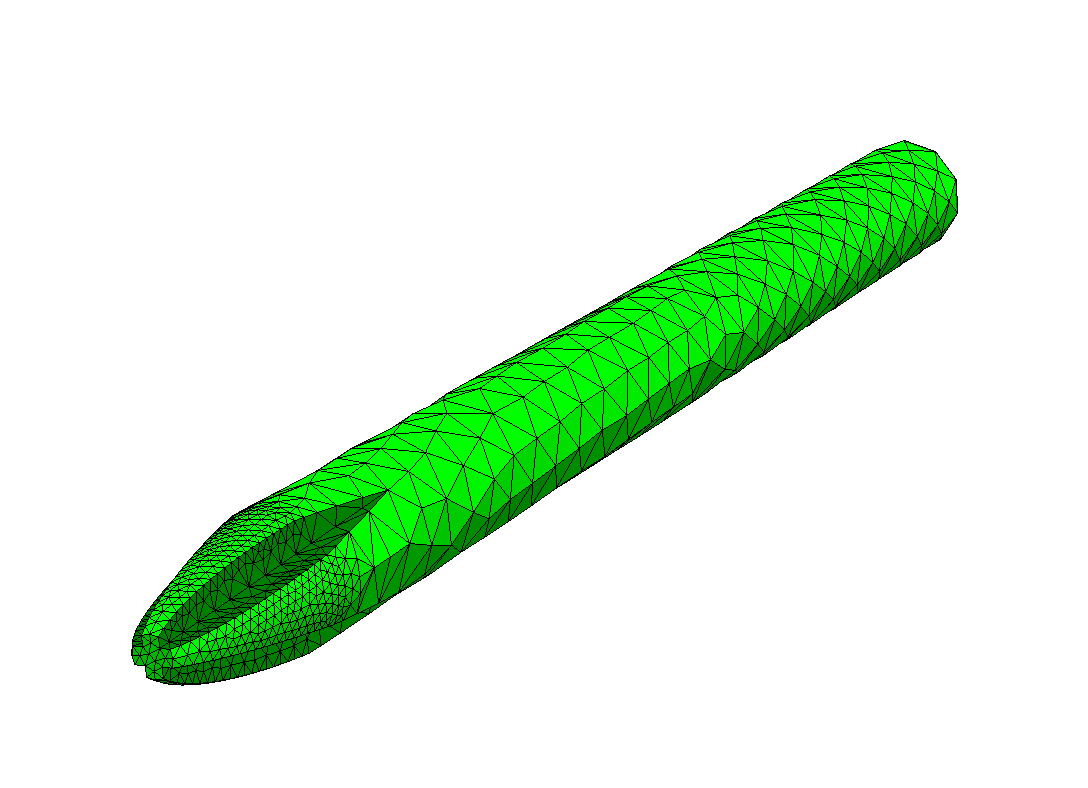} &
\includegraphics[width=0.075\textwidth, keepaspectratio]{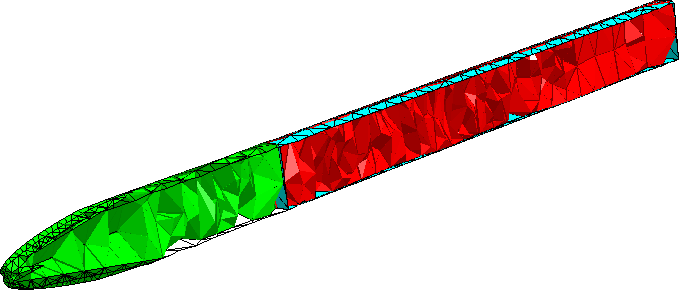}
\end{array}$
\end{center}
\begin{center}
$(d)$ Magnetic screwdrivers (with magnetic tips modelled using prismatic layers).
\end{center}
\begin{center}
$\begin{array}{ccccccc}
\includegraphics[width=0.1\textwidth, keepaspectratio]{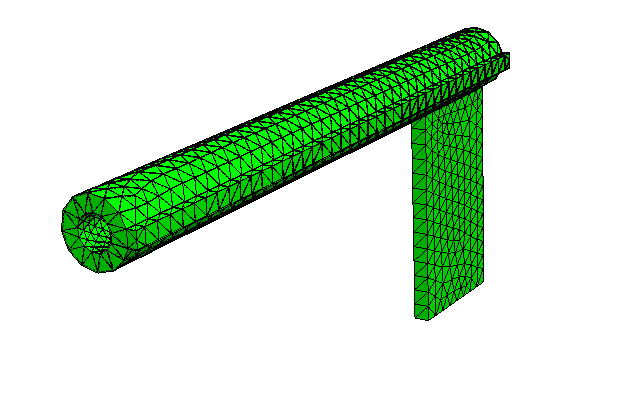} &
\includegraphics[width=0.1\textwidth, keepaspectratio]{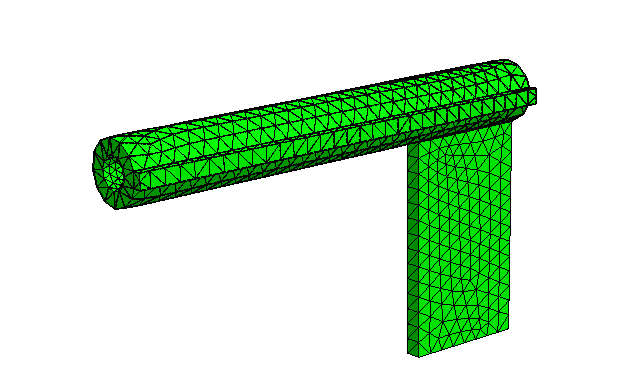} &
\includegraphics[width=0.1\textwidth, keepaspectratio]{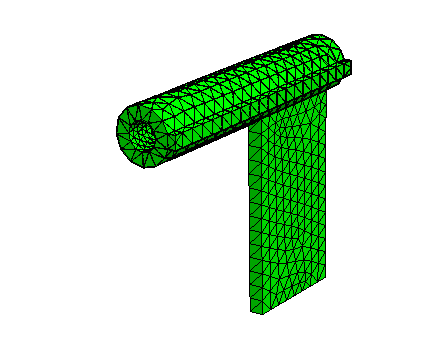} &
\includegraphics[width=0.12\textwidth, keepaspectratio]{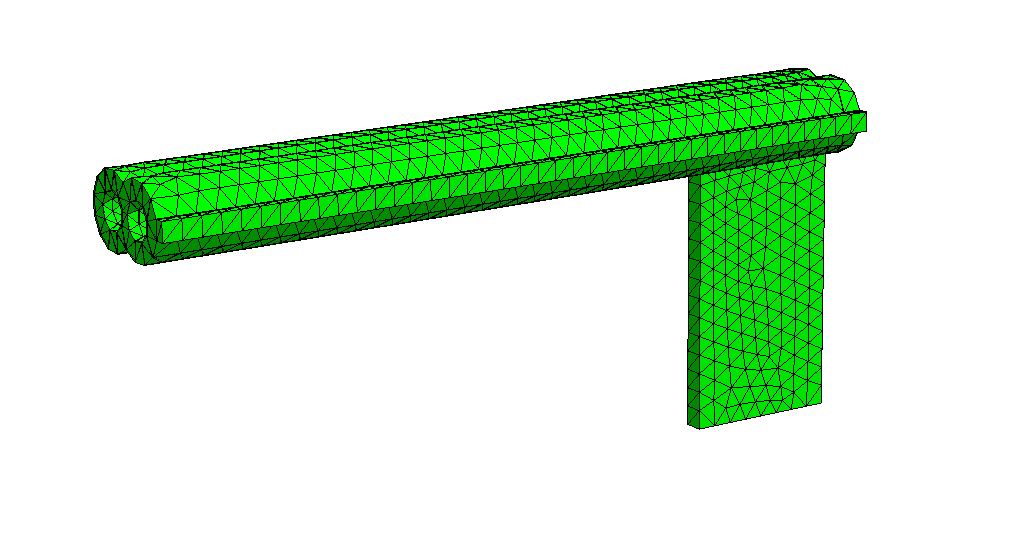} &
\includegraphics[width=0.12\textwidth, keepaspectratio]{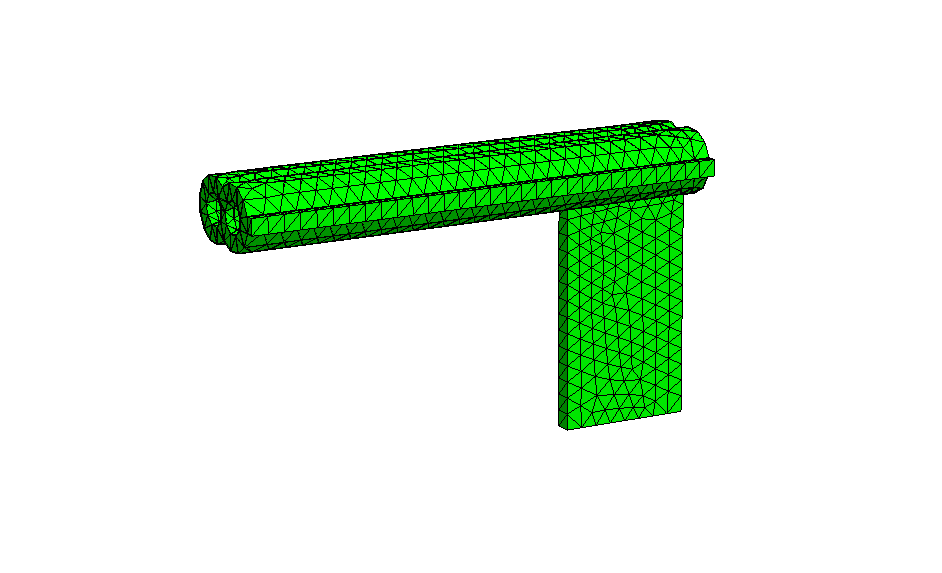} &
\includegraphics[width=0.12\textwidth, keepaspectratio]{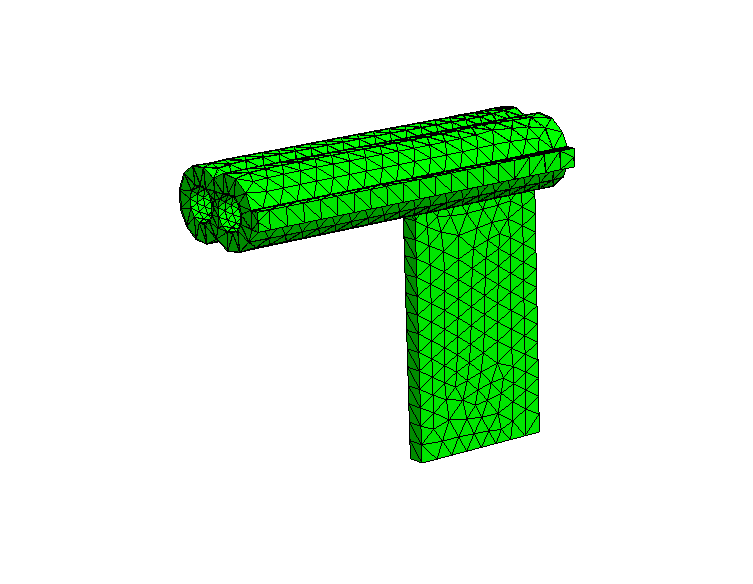} &
\includegraphics[width=0.05\textwidth, keepaspectratio]{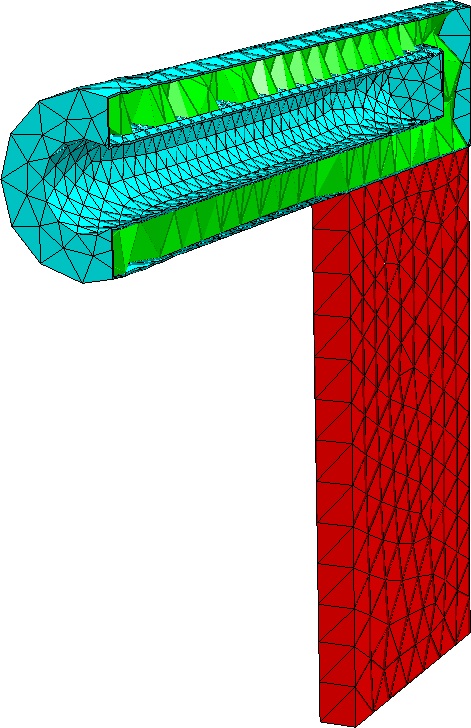}
\end{array}$
\end{center}
\begin{center}
$(e)$ Toy gun models with  non-magnetic receivers and magnetic barrels (modelled with prismatic layers).
\end{center}
\caption{Bayesian classification: Illustration of  surfaces meshes and typical cuts through objects showing prismatic layers for a  sample of non-magnetic and magnetic objects showing:  $(a)$ British coins,  $(b)$ pendants, $(c)$ kitchen knives, $(d)$ screw drivers and $(e)$ toy gun models. {In the cuts, light green is used to highlight the prismatic layers while red and green are used to represent the internal tetrahedra for different materials.}} \label{fig:objectmeshes}
\end{figure}

Using this dataset, we will consider a classification problem consisting of $K=5$ classes, which are  British coins $C_1$; pendents $C_2$;  kitchen knives $C_3$;  screwdrivers $C_4$ and toy gun models $C_5$. Of course, this is just one possible grouping of the objects in the dataset, which is based on their physical meaning and other groupings are of course possible.
 Figure~\ref{fig:dRclassesofobjects} illustrates how the additional feature information  $d_{E,\theta}(\tilde{\mathcal R}(\alpha B, \omega, \sigma_*, \mu_r), {\mathcal I}(\alpha B, \omega, \sigma_*, \mu_r))$ varies as a function of $\omega$ for the classes of objects considered, {where, from a visual inspection, there are noticeable differences in the spectral signature of $d_{E,\theta} (\tilde{\mathcal R},{\mathcal I})$ for each of the classes and similarities within the objects that have been grouped.}  The results for the British coins are not shown, but are similar to those obtained for the pendants. Indeed,   given the mirror symmetries of the pendants (and British coins), we expect $d_{E,\theta} (\tilde{\mathcal R},{\mathcal I})$ to be negligible, which is what is obtained. The kitchen knives, screwdrivers and toy gun models all exhibit different behaviour, but similarities within each class can be observed indicating that our choice of object groupings in the classes is appropriate.

\begin{figure}
\begin{center}
$\begin{array}{cc}
\includegraphics[width=0.5\textwidth]{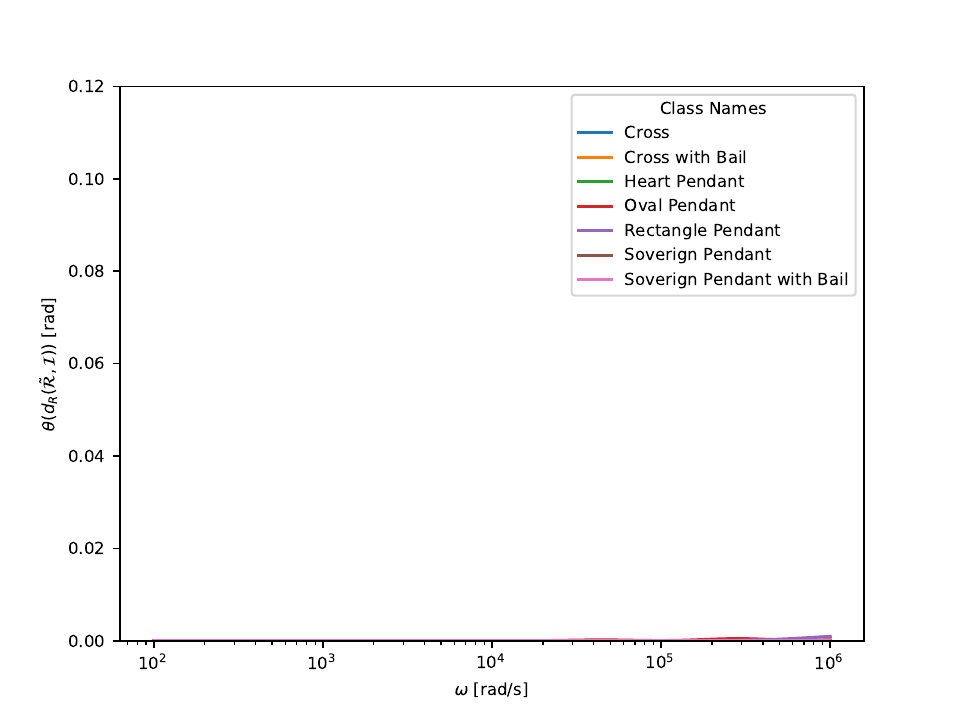} &
\includegraphics[width=0.5\textwidth]{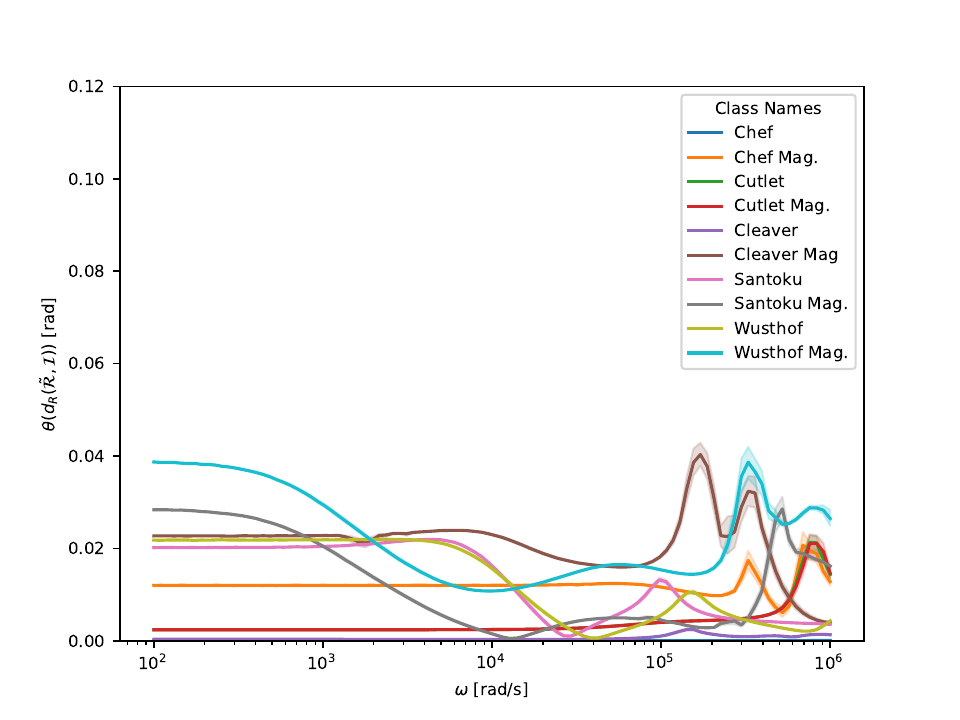} \\
\text{$(a)$ Pendants} & \text{$(b)$ Knives \& Magnetic Knives}\\
\includegraphics[width=0.5\textwidth]{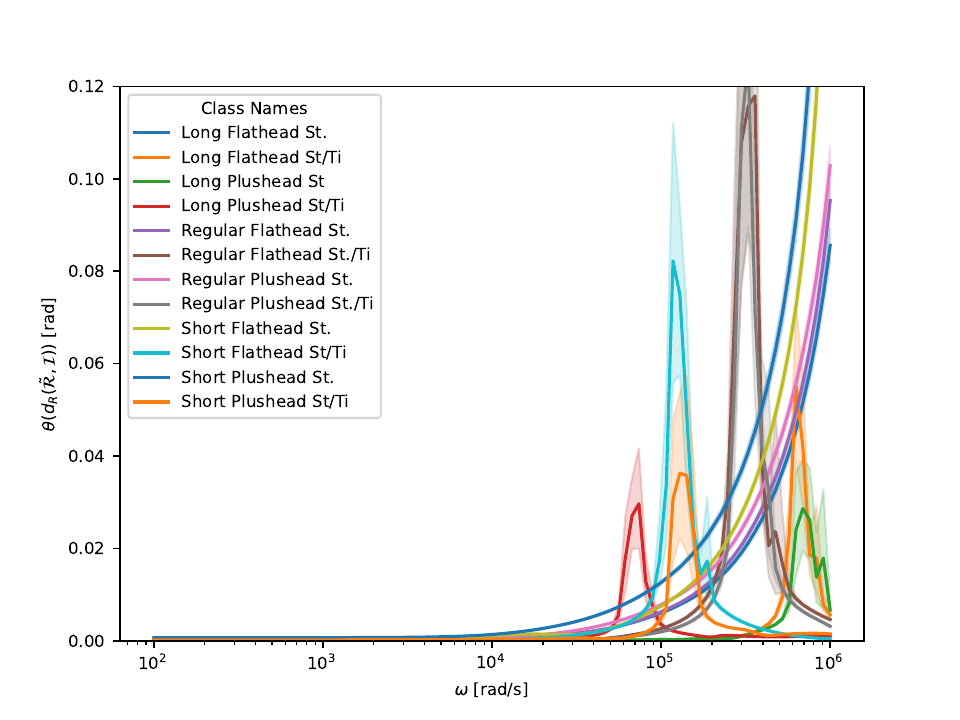} &
\includegraphics[width=0.5\textwidth]{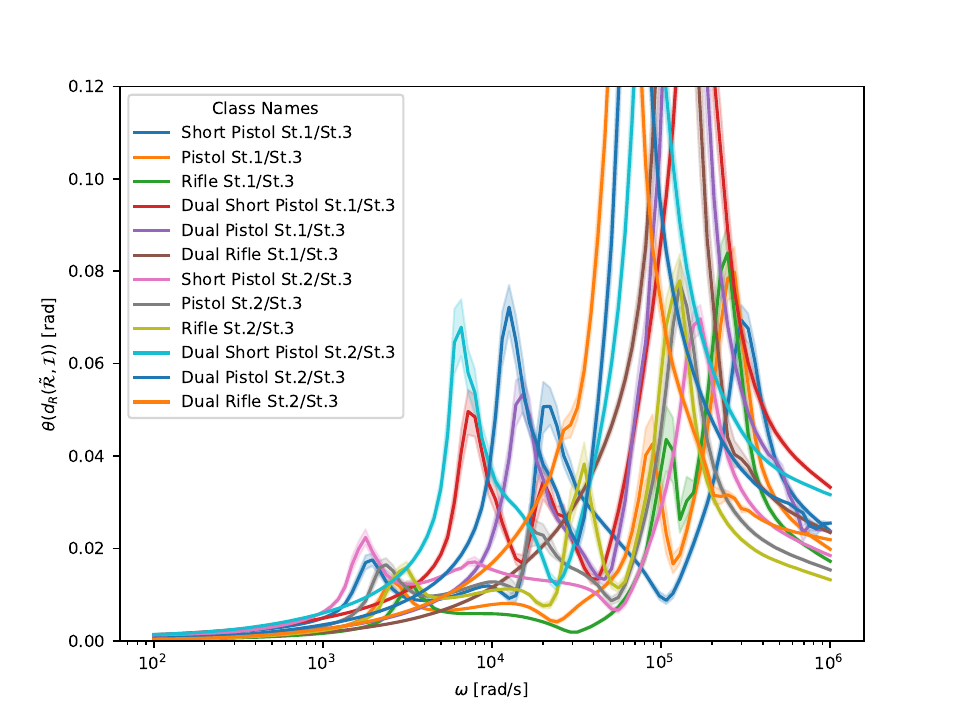} \\
 \text{$(c)$ Magnetic Screwdrivers} & \text{$(d)$ Magnetic Guns}
\end{array}$
\end{center}
\caption{Bayesian classification: Results of $d_{E,\theta} (\tilde{\mathcal R},{\mathcal I}))$ for a range of different objects, materials and sizes, showing: $(a)$ classes of pendants, $(b)$ classes of kitchen knives, $(c)$ classes of screwdrivers and $(d)$ classes of magnetic guns. } \label{fig:dRclassesofobjects}
\end{figure}

The number of features $F$ can be reduced to $G \ll F$ by using a truncated SVD. Introducing a matrix $D$ whose columns are the feature vectors, and approximating it by a truncated singular value decomposition, gives
\begin{align}
D =\left  ( {\rm x}_1 , {\rm x}_2 \ldots {\rm x}_P \right )   \approx { U}^G { \Sigma}^G ({ V}^G)^T \in {\mathbb R}^{G \times P} \nonumber ,
\end{align}
where ${ U}^G \in {\mathbb R}^{F \times G}$, ${ \Sigma}^G \in {\mathbb R}^{G \times G}$, $ { V}^G \in {\mathbb R}^{ P \times G}$ are the truncated matrices of left singular vectors, diagonal matrix of truncated singular values (arranged in descending order) and truncated matrix of right singular vectors, respectively. In a similar way to proper orthogonal decomposition~\cite{ben2020}, $ { U}^G$ can be used a projection leading to
\begin{align}
({ U}^G)^T { D} = \left (\tilde{\rm x}_1 , \tilde{\rm x}_2 , \cdots , \tilde{\rm x}_P  \right)  \in {\mathbb R}^{G \times P}, \nonumber
\end{align}
where $ \tilde{\rm x}_n = ({U}^G)^T {\rm x}_n \in {\mathbb R}^G$, $n=1,\ldots,P$ are reduced vectors of features. The updated  data set is
\begin{align}
{\mathcal D} := \left \{ (\tilde{\rm x}_1 , {\rm t}_1), (\tilde{\rm x}_2 , {\rm t}_2), \ldots, (\tilde{\rm x}_P , {\rm t}_P) \right \}, \nonumber
\end{align}
which is split in to training and testing datasets using a 3:1 split.
Given a measurement vector ${ x}\in {\mathbb R}^F$, this can be transformed as $\tilde{ x}  = ({U}^G)^T { x}\in{\mathbb R}^G$. An illustration of the decay of the relative singular values  is shown in Figure~\ref{fig:singularvaluesdecay}. For example, by choosing  truncation at $(\Sigma)_{nn}/(\Sigma)_{11} =10^{-9}$ and applying the above projection leads to a reduction from $F=170$ to $G=70$. {We justify this choice of truncation as we expect that the first two dips in the relative singular values are significant, but that the subsequent behaviour is associated with the effects of rounding in the numerical computations.}

\begin{figure}
\begin{center}
\includegraphics[width=0.5\textwidth]{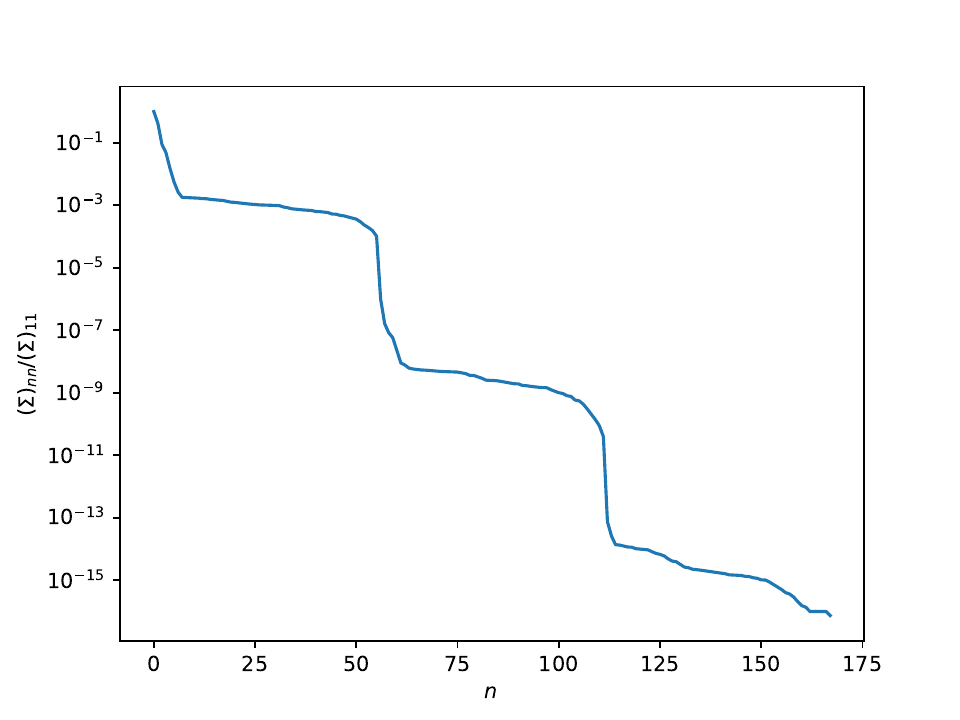} 
\end{center}
\caption{Bayesian classification: Illustration of the decay of the relative singular values of $D$. For example, choosing  truncation at $(\Sigma)_{nn}/(\Sigma)_{11} =10^{-9}$ leads to a reduction from $F=170$ to $G=70$.} \label{fig:singularvaluesdecay}
\end{figure}

\subsubsection{Object classification}

While there are many different classification approaches that could be applied~\cite{bishopbook}, we choose to focus on comparing two Bayesian based classification approaches, namely a Bayesian logistic regression classifier and a Bayesian multi-layer perceptron neural network classifier~\cite{bishopbook,pml1Book,pml2Book} using the \texttt{Probflow} library~\cite{probflow}. 
The Bayesian approach differs from standard deterministic classification approaches to logistic regression and multi-layer perceptron neural network approaches by considering the weights and bias to be random, rather than deterministic, variables. In the Bayesian approach, prior distributions are specified for these variables and the output posterior probabilities are distributions rather than deterministic values. However, even if the  priors are considered to be  Gaussian distributions with zero mean,  the variance (or covariance in general) still needs to be set. The approach followed is to specify a prior on the prior in the form of an inverse gamma distribution with suitably chosen concentration and weight.   Further details of the Bayesian approach applied to object classification using MPTs is planned for a subsequent publication.%~\cite{paulMPTbayesian}.

Once the  classifier has been trained, then given  a vector of measured features ${x}\in {\mathbb R}^F$, the result of the classification is not a deterministic value, but a probability distribution  of the posterior conditional probability $p(C_k| x)$, which informs us of how likely the (hidden) object belongs to class $C_k$ given  $x$. Exploring this distribution leads to the violin plots shown in Figures~\ref{fig:Bayclasslogreg} and~\ref{fig:BayclassoptNN} for Bayesian logistic regression and a  Bayesian neural network, respectively.
Each plot in these figures corresponds to a case where the feature vector $x$ is sampled from the test dataset and corresponding to where the correct class is either British coins, pendants, kitchen knives, screwdrivers or the simplified gun model, respectively. The results have been obtained for the case where, prior to feature reduction, there are $N=1000$ objects and $M=100$ discrete frequencies.
 To determine the architecture of the neural network, a deterministic search is first performed to determine the number of layers and number of neurons in each layer prior to applying the Bayesian classification.

Comparing the results in Figures~\ref{fig:Bayclasslogreg} and~\ref{fig:BayclassoptNN}, we observe the superior performance of the Bayesian neural network approach. For instance, results for the Bayesian logistic regression classifier in Figure~\ref{fig:Bayclasslogreg} $(a)$ indicate that the distribution of $p(C_1|x)$, when $x$ corresponds to features for a British coin, to be centred around $p(C_1|x)\approx0.6$ and a distribution for  $p(C_2|x)$ centered around  $p(C_2|x)\approx0.4$; Predicting the class to be only slightly more likely to be a British coin rather than a pendant, but also illustrates the uncertainty in this prediction. The behaviour is similar in Figure~\ref{fig:Bayclasslogreg} $(b)$. 
On the other hand, Figure~\ref{fig:BayclassoptNN} $(a)$ illustrates that the Bayesian neural network classifier predicts distributions for $p(C_1|x)$, $p (C_2|x)$ centred around  
$p(C_1|x)\approx 1.0$ and $p(C_2|x)\approx 0.0$, respectively, with the distributions for $p(C_1|x)$, $p(C_2|x)$  having thin  long tails  illustrating some small uncertainty in the predictions. 
Thus, compared to Bayesian logistic regression, this classifier correctly predicts that the class is a British coin with a high degree of certainty. 
The situation is also improved in 
Figure~\ref{fig:BayclassoptNN} $(b)$ compared to ~\ref{fig:Bayclasslogreg} $(b)$.
For Bayesian logistic regression, the results in  Figure~\ref{fig:Bayclasslogreg} $(c)$, predict a distribution of $p(C_3|x)$,  when $x$ corresponds to features for a Kitchen knife, centred around $p(C_3|x)\approx1$. With similar good performance for Bayesian logistic regression in Figure~\ref{fig:Bayclasslogreg}~$(d)$ and $(e)$, where the correct prediction class are screwdriver and simplified guns, respectively.  The results for the Bayesian neural network classifier in Figure~\ref{fig:BayclassoptNN} $(c)$, $(d)$ and $(e)$ are similarly good as they predict the correct class in each case with high degree of certainty in the predictions.

% Compared to logistic regression, this classifier correctly predicts that the class is a British coin with a high degree of certainty. Again, the probability of the correct class being a kitchen knife $p(C_3|x)$, screwdriver $p(C_4|x)$ or a simplified gun $p(C_5|x)$ are all very small, with short tails indicating a high degree of certainty in the prediction.

\begin{figure}
\begin{center}
$\begin{array}{c}
\includegraphics[width=0.5\textwidth]{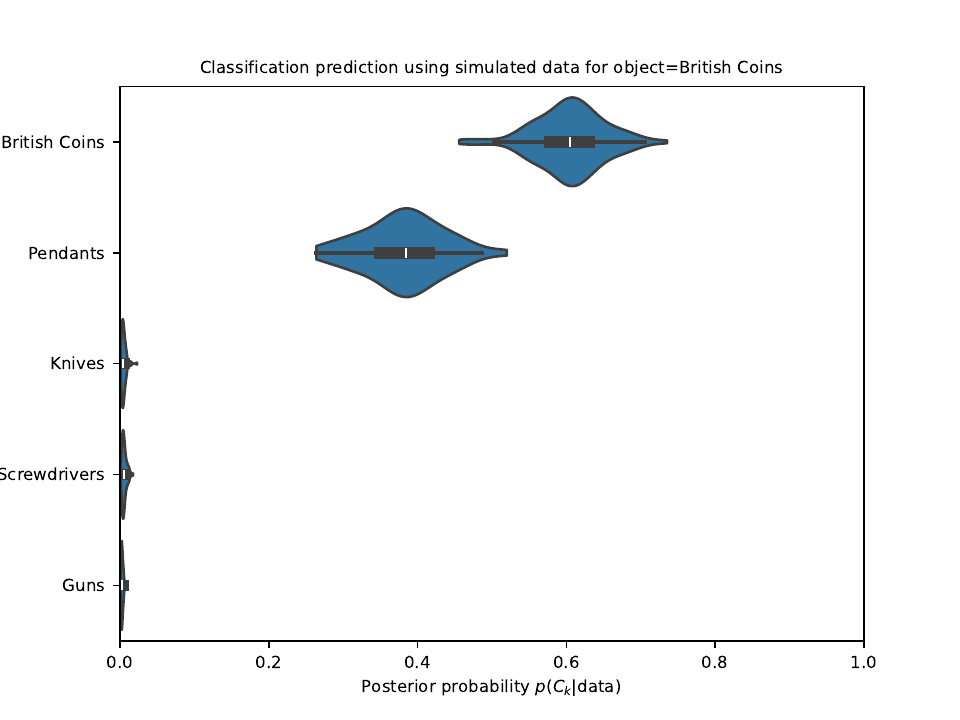} \\
(a)\\
\begin{array}{cc}
\includegraphics[width=0.5\textwidth]{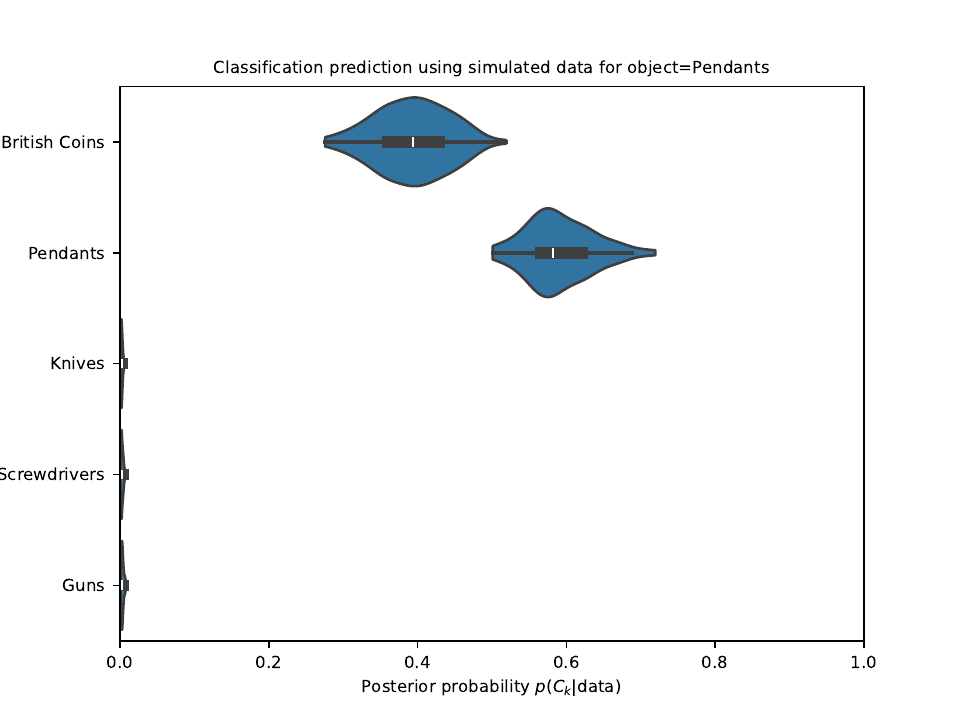} &
\includegraphics[width=0.5\textwidth]{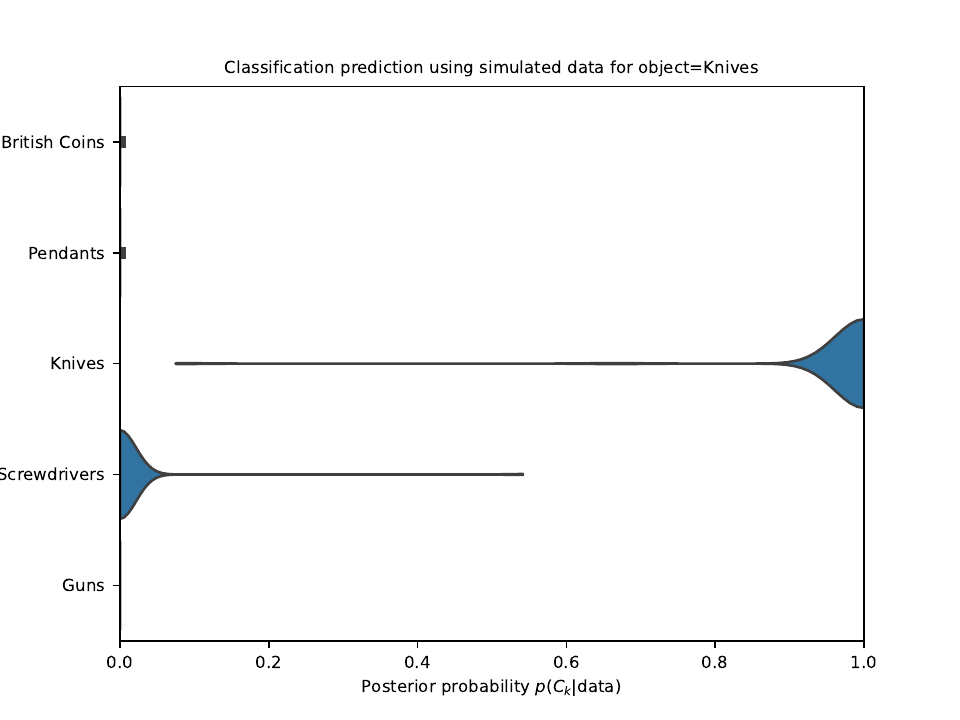} \\
(b) & (c)\\
\includegraphics[width=0.5\textwidth]{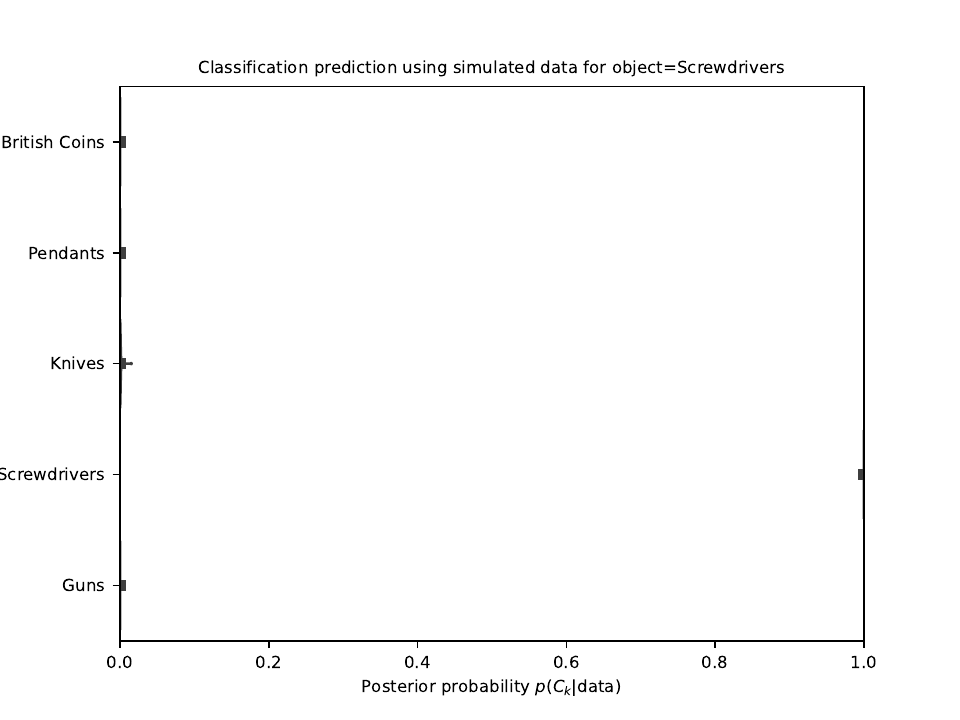} &
\includegraphics[width=0.5\textwidth]{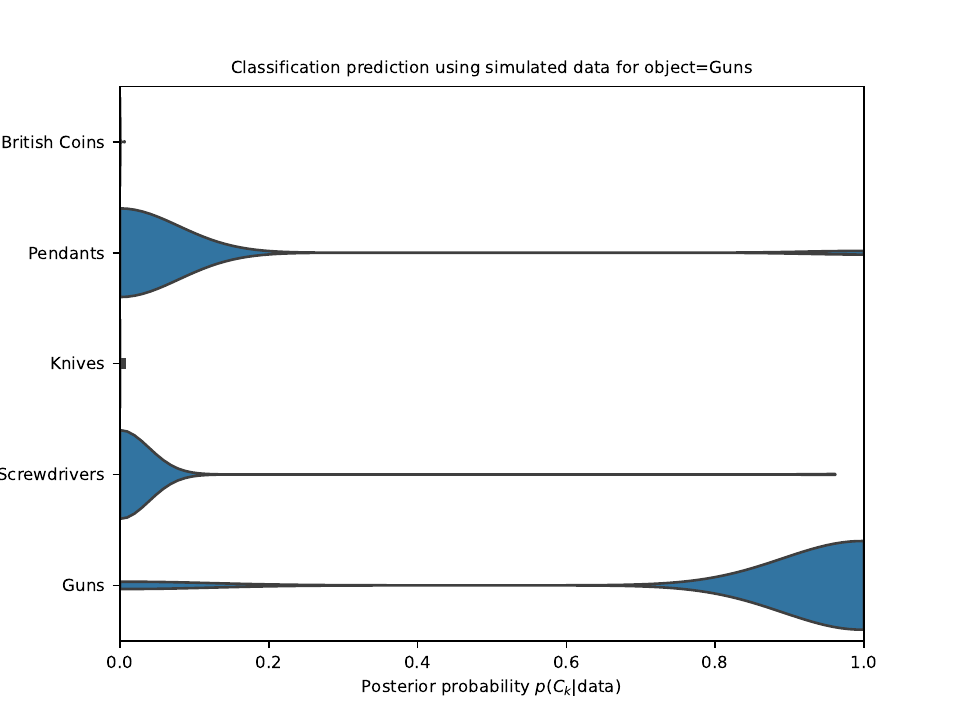} \\
(d) & (e)
\end{array}
\end{array}$
\end{center}
\caption{Bayesian logistic regression classification: Using a library with $N=1000$ objects and $M=100$ discrete frequencies where the correct classification is $(a)$ British coins, $(b)$ pendents, $(c)$ kitchen knives, $(d)$ screwdrivers and $(e)$ simplified gun models.} \label{fig:Bayclasslogreg}
\end{figure}

\begin{figure}
\begin{center}
$\begin{array}{c}
\includegraphics[width=0.5\textwidth]{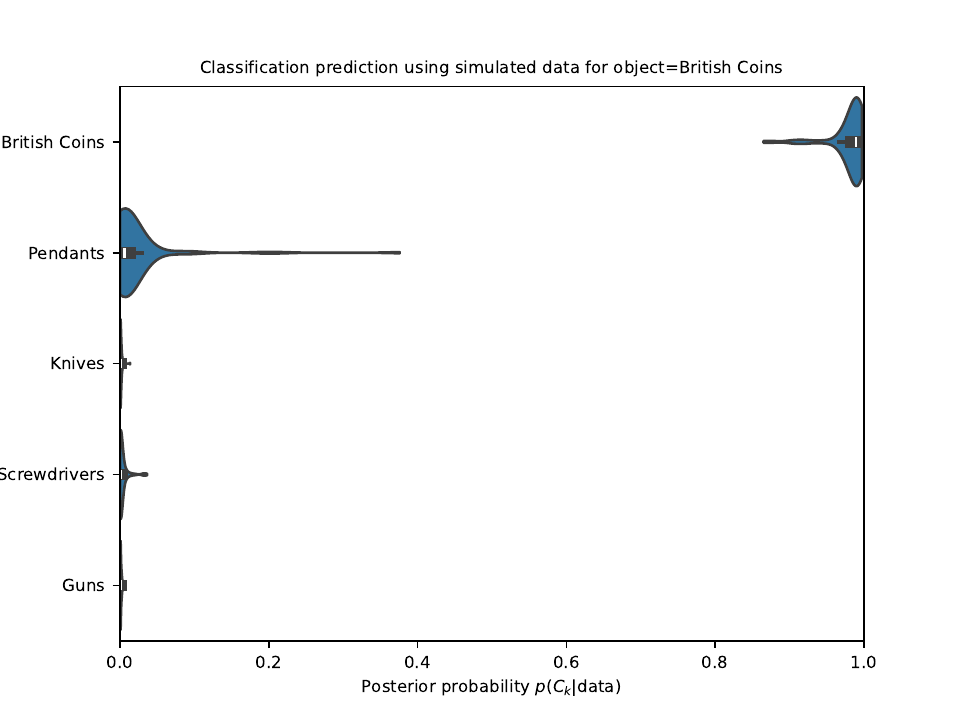} \\
(a) \\
\begin{array}{cc}
\includegraphics[width=0.5\textwidth]{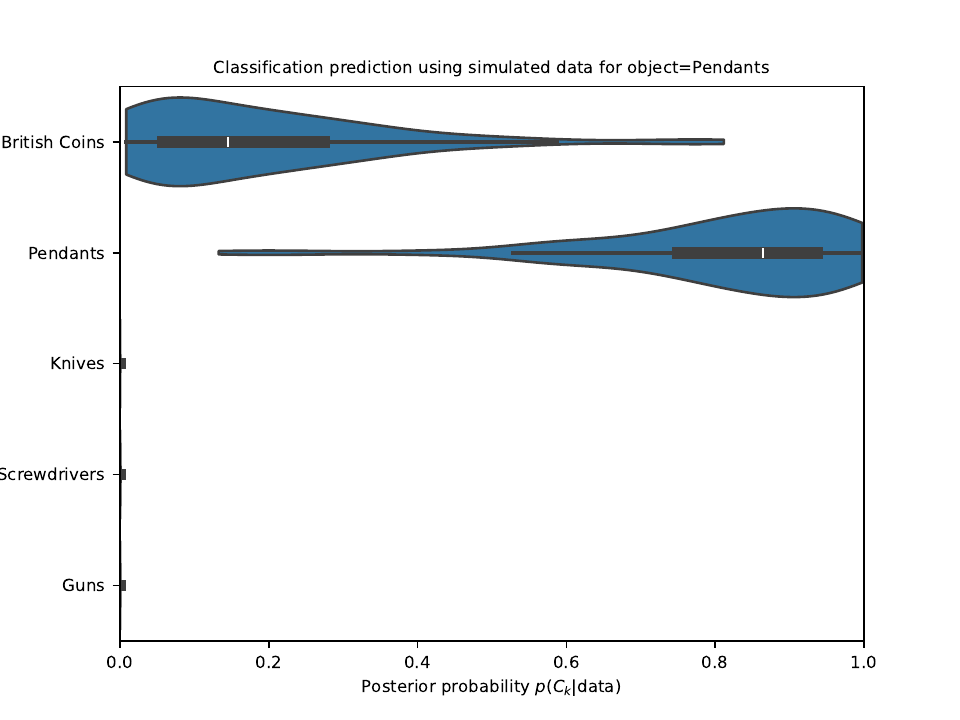} &
\includegraphics[width=0.5\textwidth]{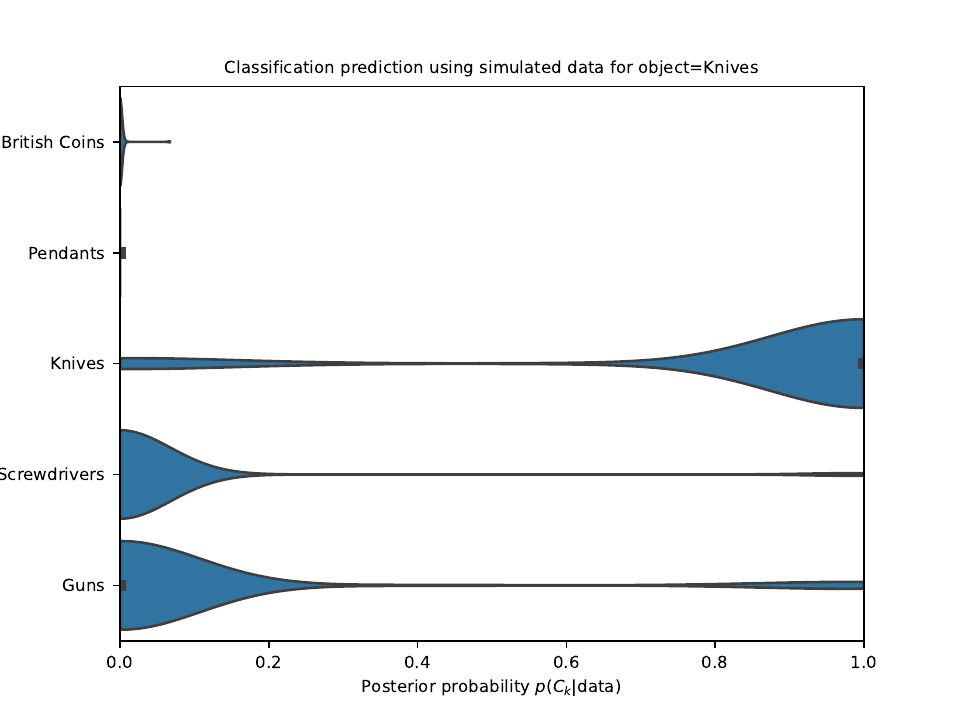} \\
(b) & (c) \\
\includegraphics[width=0.5\textwidth]{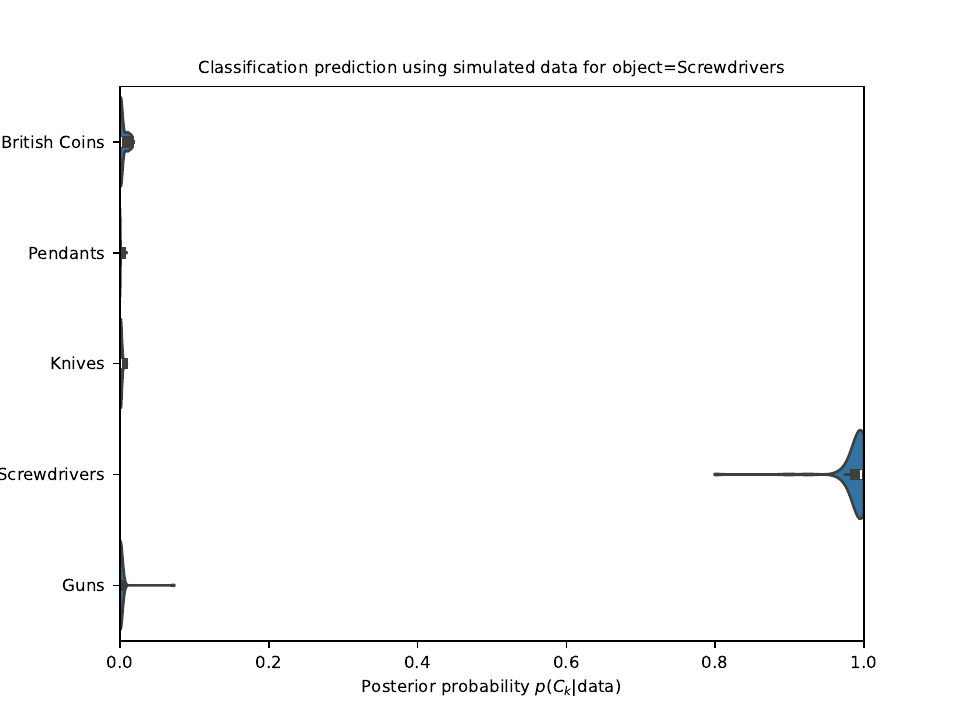} &
\includegraphics[width=0.5\textwidth]{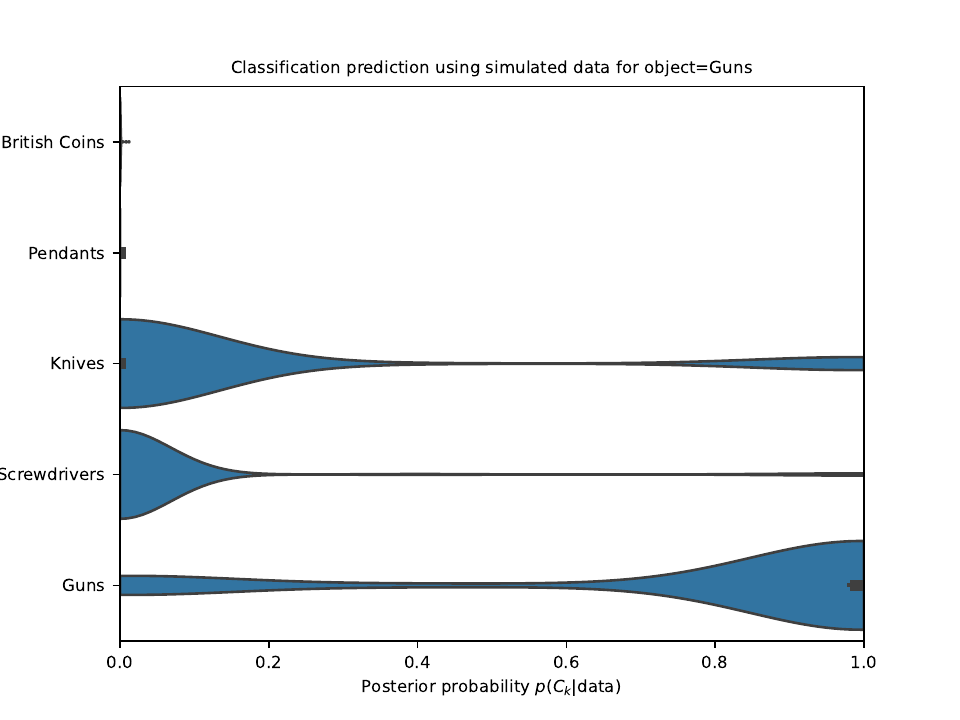}\\
(d) & (e) 
\end{array}
\end{array}$
\end{center}
\caption{Bayesian neural network classification:  Using a library with $N=1000$ objects and $M=100$ discrete frequencies where the correct classification is $(a)$ British coins, $(b)$ pendents, $(c)$ kitchen knives, $(d)$ screwdrivers and $(e)$ simplified gun models.} \label{fig:BayclassoptNN}
\end{figure}

\section{Conclusion}
{For symmetric matrices $A,B \in {\mathbb R}_s^{n\times n}$ this paper has presented new semi-metrics $|d_E^\pm(A,B)|$ and $d_C(A,B)$, and their associated approximate measures of distance $d_{E,\theta} A,B)$ and $d_{C,\theta}(A,B)$, which, for small angles, can be used to provide an approximation of the Riemannian metric $d_R(Q_A,Q_B)$ for the smallest distance between the rotational matrices $Q_A$ and $Q_B$. Our new results provide  an approximation of the extent to which $A$ and $B$  are non-commuting when the eigenvalues $\Lambda_A$ and $\Lambda_B$ are distinct, but may become close. They are beneficial in the case of  close eigenvalues, as they do not require the explicit computation of $Q_A$ and $Q_B$, whose accuracy deteriorates   when the eigenvalues become close using standard numerical solvers. Hence, in such cases, they offer advantages over traditional metrics for $SO(n)$, such as $d_R(Q_A,Q_B)$ and the  Euclidean metric $d_F(Q_A,Q_B)/\sqrt{2}$, which both require eigenvector information. The paper has shown that, in common with traditional metrics,  $|d_E^\pm(A,B)|$ and $d_C(A,B)$,  as well as $d_{E,\theta} A,B)$ and $d_{C,\theta}(A,B)$ are invariant under rotation of the dataset. }

{In addition to some illustrative examples, our main chosen application was to apply the new measures to outputs of finite element computations and we have undertaken an analysis to show that the approximations made in the numerical simulations do not pollute the new measures, provided the discretisation and regularisation are chosen to be sufficiently fine and small, respectively. }

We have applied our {new  approximate measures of  distance $d_{E,\theta}$ and $d_{C,\theta}$ to the MPT spectral signature evaluated at discrete frequencies  to aid with object characterisation in metal detection.  We have included numerical results to illustrate how peaks in the metrics and our approximate measures of distance are associated close eigenvalues and illustrated the superior performance of our approximate distance measures  in this case.
 We provide illustrative comparisons of $d_R$, $d_F/\sqrt{2}$ and $d_{E,\theta}$ and $d_{C,\theta}$ as a function of exciting frequency for several objects and include an example of Bayesian classification using the new distance measures as additional features.}

\section*{Acknowledgements}
P.D. Ledger is grateful for the financial support received from a ICMS KE Catalyst grant that is supported by EPSRC (U.K), which has helped to support part of this work.
P.D. Ledger and J. Elgy are grateful for the financial support  received from EPSRC (U.K) in the form of grant EP/V009028/1, which has helped to support part of this work. W.R.B. Lionheart is grateful to the financial support received from EPSRC in the form of grant  EP/V009109/1, which has helped to support part of this work.

\section*{Data statement}
{The open source  \texttt{MPT-Calculator} software used for computing the object characterisations can be accessed at \texttt{https://github.com/MPT-Calculator/MPT-Calculator} and version 1.51 with commit number 94a825c
 was employed in this work. The other data can be obtained on reasonable request from the corresponding author.}

\appendix
{
\section{Explicit computation of eigenvalues and eigenvectors of $A\in {\mathbb R}_s^{3 \times 3}$} \label{sect:appendixexplicteig}
We recall from~\cite{smitheigen} that Cardano's explicit computation of eigenvalues of $A\in {\mathbb R}_s^{3 \times 3}$ is given by
\begin{subequations}
\begin{align}
\lambda_1(A) = & m + \sqrt{p}\cos \phi, \\
\lambda_2(A) = & m - \sqrt{p}( \cos \phi + \sqrt{3} \sin \phi), \\
\lambda_3(A) = & m - \sqrt{p}( \cos \phi - \sqrt{3} \sin \phi) ,
\end{align}
\end{subequations}
where $3m =\text{tr}(A)$, $ 3\phi=\tan^{-1} (\sqrt{p^2-q^2}/{q})$ with $0 \le \phi \le \pi$, $2q=\text{det}(A-mI)$ and $6p$ is the sum of squares of $A-mI$.
}

{
Recall $B \, \text{adj} \,(B)  = \text{det} \, (B) I$  for any $B \in {\mathbb R}^{n \times n}$~\cite{strang}, so if $B$ is singular $\text{det} \, (B)=0 $ and the columns of $\text{adj} \,(B)$ are in the nullspace of $B$.  Moreover,  if $\text{rank}(B)=n-1$ then $\text{rank}\,(\text{adj}\,( B))) =1$~\cite{matrix_analysis}[Chapt 4.].}
{Now suppose $\lambda_i(A)$ is an eigenvalue of $A$ with algebraic multiplicity $1$ then  $B=A - \lambda_i I$ has rank $n-1$ and as $\text{rank}\, (\text{adj}\,  (B)) =1$, any non zero column of $\text{adj}\, (B) $ is an eigenvector $q_i$ of $A$~\cite{matrix_analysis}[Chapt 4.]. For the case of $n=3$, this can also be related to vector cross products. Writing
\begin{align}
B=A - \lambda_i I = \left ( \begin{array}{ccc}
(B)_{11} & (B)_{12} & (B)_{13} \\
(B)_{21} & (B)_{22} & (B)_{23} \\
(B)_{31} & (B)_{32} & (B)_{33} \end{array} \right)
\end{align}
then, forming the vectors
\begin{align}
{u} = \left ( \begin{array}{rrr}  (B)_{11} \\ (B)_{21}\\ (B)_{31} \end{array}
\right ), \qquad 
{v} = \left ( \begin{array}{rrr}  (B)_{12} \\ (B)_{22}\\ (B)_{32} \end{array}
\right ), \qquad 
{w} = \left ( \begin{array}{rrr}  (B)_{13} \\ (B)_{23}\\ (B)_{33} \end{array}
\right ), 
\end{align}
and  provided ${u}$ and ${v}$ are linearly dependent, the coefficients of ${ q}_i = {u} \times {v}$ (or equivalently up to a sign ${u} \times {w}$ or ${ v} \times { w}$ since ${ w}$ is a linear combination of ${u}$ and ${v}$ as $B$ has at most rank $2$) are also those of the eigenvector $q_i$. This can be seen by comparing the coefficients of $\text{adj}\,(B)$ to those of these cross products. If ${u}$ and ${v}$ are not linearly dependent then $ {u} \times {v} = {0}$ and is not an eigenvector.}

{Unfortunately, in finite precision arithmetic, using $\text{adj}\, (B) $ (or the cross product approach) can suffer from cancelation errors in the computation of $B$ and in $\text{adj}\,(B)$ and requires special treatment based for closely spaced eigenvalues, which additionally requires the detection of eigenvalue proximity~\cite{kopp,3x3techreport}.}

\section{Simple constant free a-posteriori error indicator} \label{sect:appendix}
This appendix presents a simple constant free error indicator to assess the impact of numerical regularisation. To do this, the alternative MPT tensor splitting $({\mathcal M})_{ij} = ({\mathcal N}^{(0)})_{ij} + ({\mathcal N}^{\sigma_*})_{ij}  - ({\mathcal C}^{\sigma_*})_{ij}$~\cite{LedgerLionheart2016} is used where
\begin{align}
({\mathcal N}^0)_{ij} & = \frac{\alpha^3}{2} \int_B ( 1 - \mu_r^{-1} ) {\bm e}_i \cdot \nabla \times {\bm \theta}_j^{(0)} \dif {\bm \xi},\\
({\mathcal C}^{\sigma_*})_{ij} & - \frac{\im \nu \alpha^3}{4} \int_B {\bm e}_i \cdot {\bm \xi} \times ( {\bm \theta}_j^{(0)} + {\bm \theta}_j^{(1)} ) \dif {\bm \xi},\\
({\mathcal N}^{\sigma_*})_{ij} & -\frac{\alpha^3}{4} \int_B (1- \mu_r^{-1})  {\bm e}_i \cdot \nabla \times {\bm \theta}_j^{(1)}  \dif {\bm \xi}.
\end{align}
Consider  a regularised formulation, where ${\bm \theta}_i^{(0)}$ is replace by ${\bm \theta}_i^{(0), \varepsilon} = \tilde{\bm \theta}_i^{(0), \varepsilon} +{\bm e}_i \times {\bm \xi} $ and ${\bm \theta}_i^{(1)}$ is replace by ${\bm \theta}_i^{(1), \varepsilon}$, which have strong forms: Find $ \tilde{\bm \theta}_i^{(0), \varepsilon}$ such that
\begin{subequations}
\begin{align}
\nabla \times {\mu}_r^{-1} \nabla \times \tilde{\bm \theta}_i^{(0), \varepsilon} + \varepsilon 
\tilde{\bm \theta}_i^{(0), \varepsilon} & = {\bm 0} && \text{in $B$} , \\
\nabla \times  \nabla \times \tilde{\bm \theta}_i^{(0), \varepsilon} + \varepsilon 
\tilde{\bm \theta}_i^{(0), \varepsilon} & = {\bm 0} && \text{in $\Omega \setminus B$},  \\
[ {\bm n} \times \tilde{\bm \theta}_i^{(0), \varepsilon}  ] & ={\bm 0} && \text{on $\Gamma$}, \\
[ {\bm n} \times \tilde{\mu}_r^{-1} \nabla \times  \tilde{\bm \theta}_i^{(0), \varepsilon}  ] & =
 -2 [\tilde{\mu_r}^{-1}] {\bm n }\times {\bm e}_i && \text{on $\Gamma$} ,\\
 {\bm n } \times  \tilde{\bm \theta}_i^{(0), \varepsilon}  & ={\bm 0} && \text{on $\partial \Omega$},
\end{align}
\end{subequations}
and  find: $ \tilde{\bm \theta}_i^{(1), \varepsilon}$ such that
\begin{subequations}
\begin{align}
\nabla \times {\mu}_r^{-1} \nabla \times {\bm \theta}_i^{(1), \varepsilon} - \im \nu {\bm \theta}_i^{(1), \varepsilon}  & =  \im \nu {\bm \theta}_i^{(0), \varepsilon} && \text{in $B$},\\
\nabla \times  \nabla \times {\bm \theta}_i^{(1), \varepsilon}   + \varepsilon {\bm \theta}_i^{(1), \varepsilon}& =  {\bm 0}&& \text{in $\Omega \setminus \overline{B}$},\\
[ {\bm n} \times {\bm \theta}_i^{(1), \varepsilon}] & = {\bm 0} && \text{on $\Gamma$}, \\
[ {\bm n} \times \mu_r^{-1} \nabla \times {\bm \theta}_i^{(1), \varepsilon}] & = {\bm 0} && \text{on $\Gamma$}, \\
{\bm n} \times {\bm \theta}_i^{(1), \varepsilon}   & = {\bm 0}  && \text{on $\partial \Omega$}.
\end{align}
\end{subequations}
If one chooses to replace  $   {\bm \theta}_i^{(0)} $ by ${\bm \theta}_i^{(0),\varepsilon}  =    \tilde{\bm \theta}_i^{(0), \varepsilon}+ {\bm e}_i \times {\bm \xi}$  and $   {\bm \theta}_i^{(1)} $ by ${\bm \theta}_i^{(1),\varepsilon} $ and performing integration by parts in a similar way to Theorem 3.2 and Theorem 5.1 of~\cite{LedgerLionheart2020spect}  then $({\mathcal N}^0)_{ij}$ and $({\mathcal N}^{\sigma_*})_{ij} -({\mathcal C}^{\sigma_*})_{ij}$ are replaced by
\begin{align}
( {\mathcal N}^{0})_{ij} =&  \alpha^3 \int_B [\mu_r] \delta_{ij} \dif {\bm \xi} +
\frac{\alpha^3}{4} \int_\Omega \tilde{\mu}_r^{-1} \nabla \times {\bm \theta}_i^{(0), \varepsilon} \cdot \nabla \times {\bm \theta}_j^{(0), \varepsilon} \dif {\bm \xi} +
\frac{\alpha^3\varepsilon }{4} \int_\Omega{\bm \theta}_i^{(0), \varepsilon} \cdot {\bm \theta}_j^{(0), \varepsilon} \dif {\bm \xi} ,\nonumber \\
 = & ({\mathcal N}^{0,\epsilon})_{ij} + \frac{\alpha^3\varepsilon }{4} \int_\Omega{\bm \theta}_i^{(0), \varepsilon} \cdot {\bm \theta}_j^{(0), \varepsilon} \dif {\bm \xi} ,\\
({\mathcal N}^{\sigma_*} )_{ij}-
( {\mathcal C}^{\sigma_* })_{ij}  = &  \frac{\im \alpha^3}{4} \int_B
\nabla \times \mu_r^{-1} \nabla \times {\bm \theta}_j^{(1),\varepsilon} \cdot
\nabla \times \mu_r^{-1} \nabla \times \overline{{\bm \theta}_i^{(1),\varepsilon} }\dif {\bm \xi}
\nonumber \\
&- \frac{\alpha^3}{4} \int_\Omega \tilde{\mu}_r^{-1} \nabla \times {\bm \theta}_j^{(1),\varepsilon} \cdot\nabla \times
 \overline{{\bm \theta}_i^{(1),\varepsilon} }\dif {\bm \xi} 
  +\frac{ \varepsilon \alpha^3}{4} \int_{\Omega \setminus \overline{B}} {\bm \theta}_j^{(1),\varepsilon} \cdot
 \overline{{\bm \theta}_i^{(1),\varepsilon} }\dif {\bm \xi} 
 \nonumber\\
&  -\frac{ \varepsilon \alpha^3}{4} \int_{\Omega \setminus \overline{B}} {\bm \theta}_j^{(1),\varepsilon} \cdot
{{\bm \theta}_i^{(0),\varepsilon} }\dif {\bm \xi} 
-\frac{\varepsilon \alpha^3}{4} \int_{\Omega \setminus \overline{B}} {\bm \theta}_j^{(0),\varepsilon} \cdot {\bm e}_i \times {\bm \xi}
 \dif {\bm \xi} \nonumber \\
  = &({\mathcal R}^\varepsilon )_{ij}  + \im({\mathcal I}^\varepsilon )_{ij}  \nonumber\\
  & +\frac{ \varepsilon \alpha^3}{4} \int_{\Omega \setminus \overline{B}} {\bm \theta}_j^{(1),\varepsilon} \cdot
 \overline{{\bm \theta}_i^{(1),\varepsilon} }\dif {\bm \xi}  -\frac{ \varepsilon \alpha^3}{4} \int_{\Omega \setminus \overline{B}} {\bm \theta}_j^{(1),\varepsilon} \cdot
{{\bm \theta}_i^{(0),\varepsilon} }\dif {\bm \xi} 
 -\frac{\varepsilon \alpha^3}{4} \int_{\Omega \setminus \overline{B}} {\bm \theta}_j^{(0),\varepsilon} \cdot {\bm e}_i \times {\bm \xi}
 \dif {\bm \xi} ,
\end{align}
respectively. Making approximations gives the error indicators
\begin{subequations}
\begin{align}
| ({\mathcal N}^{(0)})_{ij} - ({\mathcal N}^{(0),\varepsilon})_{ij} | &  =\left |  \frac{\varepsilon \alpha^3}{4} \int_{\Omega }  {\bm \theta}_j^{(0),\varepsilon} \cdot {\bm \theta}_j^{(0),\varepsilon}  \dif {\bm \xi} \right |
\approx
\left |  \frac{\varepsilon \alpha^3}{4} \int_{\Omega }  {\bm \theta}_j^{(0),hp} \cdot {\bm \theta}_j^{(0),hp}  \dif {\bm \xi} \right |
 , \\
| ({\mathcal R})_{ij} - ({\mathcal R}^{\varepsilon})_{ij} |  &=
\left | \text{Re} \left (
  \frac{ \varepsilon \alpha^3}{4}   \int_{\Omega \setminus \overline{B}} {\bm \theta}_j^{(1),\varepsilon} \cdot
 \overline{{\bm \theta}_i^{(1),\varepsilon} }\dif {\bm \xi}  +\frac{ \varepsilon \alpha^3}{4}  \int_{\Omega \setminus \overline{B}} {\bm \theta}_j^{(1),\varepsilon} \cdot
{{\bm \theta}_i^{(0),\varepsilon} }\dif {\bm \xi}  \right) \right . \nonumber \\
&\qquad \qquad \qquad \qquad \qquad \left . -\frac{\varepsilon \alpha^3}{4} \int_{\Omega \setminus \overline{B}} {\bm \theta}_j^{(0),\varepsilon} \cdot {\bm e}_i \times {\bm \xi} \dif {\bm \xi}  \right |   , \nonumber\\
  & \approx 
\left | \text{Re} \left (
  \frac{ \varepsilon \alpha^3}{4}   \int_{\Omega \setminus \overline{B}} {\bm \theta}_j^{(1),hp } \cdot
 \overline{{\bm \theta}_i^{(1), hp } }\dif {\bm \xi}  +\frac{ \varepsilon \alpha^3}{4}  \int_{\Omega \setminus \overline{B}} {\bm \theta}_j^{(1), hp } \cdot
{{\bm \theta}_i^{(0), hp } }\dif {\bm \xi}  \right) \right . \nonumber \\
&\qquad \qquad \qquad \qquad \qquad \left . -\frac{\varepsilon \alpha^3}{4} \int_{\Omega \setminus \overline{B}} {\bm \theta}_j^{(0), hp} \cdot {\bm e}_i \times {\bm \xi} \dif {\bm \xi}  \right |   , \\
| ({\mathcal I})_{ij} - ({\mathcal I}^{\varepsilon})_{ij} |  &  =\left |  \text{Im} \left (
  \frac{ \varepsilon \alpha^3}{4} \int_{\Omega \setminus \overline{B}} {\bm \theta}_j^{(1),\varepsilon} \cdot
 \overline{{\bm \theta}_i^{(1),\varepsilon} }\dif {\bm \xi}  -\frac{ \varepsilon \alpha^3}{4} \int_{\Omega \setminus \overline{B}} {\bm \theta}_j^{(1),\varepsilon} \cdot
{{\bm \theta}_i^{(0),\varepsilon} }\dif {\bm \xi}  \right) \right | , \nonumber \\
& \approx  \left |  \text{Im} \left (
  \frac{ \varepsilon \alpha^3}{4} \int_{\Omega \setminus \overline{B}} {\bm \theta}_j^{(1), hp } \cdot
 \overline{{\bm \theta}_i^{(1), hp} }\dif {\bm \xi}  -\frac{ \varepsilon \alpha^3}{4} \int_{\Omega \setminus \overline{B}} {\bm \theta}_j^{(1), hp} \cdot
{{\bm \theta}_i^{(0), hp} }\dif {\bm \xi}  \right) \right | .
\end{align}
\end{subequations}
From which
\begin{subequations}
\begin{align} 
 \left \|  {\mathcal Z} - {\mathcal Z}^\varepsilon  \right \| \approx & \Delta ( \| {\mathcal Z} \|) :=
2 \|    {\mathcal R} - {\mathcal R}^\varepsilon \| \|  {\mathcal I}^\varepsilon \| +
2 \|   {\mathcal I} - {\mathcal I}^\varepsilon \| \|  {\mathcal R}^\varepsilon \| , \label{eqn:indiccommerr} \\
\left \| {\mathcal Z}^{(0)} - {\mathcal Z}^{(0),\varepsilon}  \right \| \approx & \Delta ( \| {\mathcal Z}^{(0)} \|) :=  2\|   {\mathcal N}^{(0)} - {\mathcal N}^{(0),\varepsilon}  \| \|  {\mathcal I}^\varepsilon  \|
+2\|  {\mathcal N}^{(0),\varepsilon} \| \|   {\mathcal I}- {\mathcal I}^\varepsilon \|, \label{eqn:indiccommerr0}  \\
 \left \| \tilde{\mathcal Z} - \tilde{\mathcal Z}^\varepsilon \right \| \approx &  \Delta ( \| \tilde{\mathcal Z} \|) := 
2 \|   {\mathcal R} - {\mathcal R}^\varepsilon \| \| {\mathcal I}^\varepsilon \| +
2 \|   {\mathcal I} - {\mathcal I}^\varepsilon \| \|  {\mathcal R}^\varepsilon \| + 2 \|   {\mathcal N} ^0- {\mathcal N}^{0,\varepsilon} \| \| {\mathcal I}^\varepsilon \| , \label{eqn:indiccommerrtil}
\end{align}
\end{subequations}
can be approximated.

\bibliographystyle{acm}
\bibliography{paperbib}
\end{document}